\newcolumntype{M}[1]{>{\centering\arraybackslash}m{#1}} 
\DeclareFontFamily{OMS}{rsfs}{\skewchar\font'60}
\DeclareFontShape{OMS}{rsfs}{m}{n}{<-5>rsfs5 <5-7>rsfs7 <7->rsfs10 }{}
\DeclareSymbolFont{rsfs}{OMS}{rsfs}{m}{n}
\DeclareSymbolFontAlphabet{\scr}{rsfs}
\DeclareSymbolFontAlphabet{\scr}{rsfs}
\def\tr{{\rm tr}}
\newcommand\cE{{\mathcal E}}
\newcommand\cF{{\mathcal F}}
\newcommand\cG{{\mathcal G}}
\newcommand\cH{{\mathcal H}}
\newcommand\cO{{\mathcal O}}
\newcommand\cS{{\mathcal S}}
\newcommand{\ddbar}{\sqrt{-1}\partial\bar\partial}
\newcommand{\rk}{{\rm rk}}
\newcommand{\ID}{{\rm{I}d}}
\theoremstyle{plain}
\newtheorem{thm}{Theorem}[section]
\newtheorem{lemma}[thm]{Lemma}
\newtheorem{prop}[thm]{Proposition}
\newtheorem{cor}[thm]{Corollary}
\newtheorem{defn}[thm]{Definition}
\newtheorem{claim}[thm]{Claim}
\theoremstyle{remark}
\newtheorem{remark}[thm]{Remark}
\newcommand\dbar{{\overline{\partial}}}
\def\dim{\operatorname{dim}}
\def\Tr{\operatorname{Tr}}
\def\max{\operatorname{max}}
\def\Hom{\operatorname{Hom}}
\def\hor{\operatorname{\hor}}
\def\ver{\operatorname{\ver}}
\def\sm{\operatorname{\textsubscript{\rm sm}}}
\def\sing{\operatorname{\textsubscript{\rm sing}}}
\def\orb{\mathrm{orb}}
\def\ver{\operatorname{\textsubscript{\rm ver}}}
\def\hor{\operatorname{\textsubscript{\rm hor}}}
\def\orb{\mathrm{orb}}
\setlist[itemize]{leftmargin=*}
\setlist[enumerate]{leftmargin=*}
\numberwithin{equation}{section} 
\title{Title} 
\subjclass[2010]{}
\keywords{}
\author{Fu Xin}
\address{School of sciences, Institute for Theoretic Sciences, Westlake University, Hangzhou, Zhejiang Province, 310030, China}
\email{fuxin54@westlake.edu.cn}
\author{Ou Wenhao}
\address{ Institute of Mathematics, Academy of Mathematics and Systems Science, Chinese Academy of Sciences, Beijing, 100190, China}
\email{wenhaoou@amss.ac.cn}
\begin{document}

\begin{abstract}
In \cite{Ou2024}, the orbifold Bogomolov-Gieseker  inequality is proved for a stable reflexive sheaf on a compact K\"ahler variety with klt singularities.  
In this paper,  we  give a characterization on the stable reflexive sheaf  when the Bogomolov-Gieseker equality holds. 
\end{abstract}

\title{Orbifold Bogomolov-Gieseker inequalities on compact K\"ahler varieties
}

\maketitle

\tableofcontents


\section{Introduction}

The theory of  holomorphic vector bundles is a central object in complex algebraic geometry and complex analytic geometry. 
The notion of stable vector bundles on complete curves was introduced by Mumford  in  \cite{Mumford1963}. 
Such notion of stability was then extended to torsion-free sheaves on any projective manifolds (see \cite{Takemoto1972}, \cite{Gieseker1977}), and is now known as the slope stability. 
An  important property of stable vector bundles is the following Bogomolov-Gieseker inequality, involving the Chern classes of the vector  bundle.
\begin{thm}
\label{thm:BG-inequality-intro}
Let $Z$ be a projective manifold of dimension $n$,   let  $H$ be  an ample divisor, and let $\cF$ be a $H$-stable vector bundle of rank $r$ on $Z$. 
Then 
\[   \Big(c_2(\cF)-\frac{r-1}{2r}c_1(\cF)^2 \Big)  \cdot  H^{n-2} \geq  0. \]
\end{thm}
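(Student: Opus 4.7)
The plan is to prove this analytically, combining the Donaldson-Uhlenbeck-Yau correspondence with a Chern-Weil calculation so that the discriminant
\[ \Delta(\cF) \;:=\; 2r\, c_2(\cF) - (r-1)\, c_1(\cF)^2 \]
is expressed as the integral of a pointwise nonnegative curvature expression. Fix a K\"ahler form $\omega$ representing $c_1(H)$.

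First, since $\cF$ is $H$-slope stable on the smooth projective manifold $Z$, the Donaldson-Uhlenbeck-Yau theorem supplies a smooth Hermitian metric $h$ on $\cF$ whose Chern curvature $F_h \in A^{1,1}(Z, \operatorname{End}\cF)$ satisfies the Hermite-Einstein equation $\sqrt{-1}\,\Lambda_\omega F_h = \lambda\, \id_\cF$, with $\lambda$ a constant determined by the slope of $\cF$. Next, I would use the Chern-Weil formula $c(\cF,h) = \det\bigl(I + \tfrac{\sqrt{-1}}{2\pi}F_h\bigr)$ to represent $c_1(\cF)$ and $c_2(\cF)$ as differential forms in $F_h$, and decompose $F_h = F_h^{0} + \tfrac{1}{r}(\tr F_h)\cdot \id_\cF$ into its trace-free and trace pieces. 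A direct manipulation of the first two elementary symmetric polynomials in the eigenvalues of $F_h$ then yields the identity
\[ 2r\, c_2(\cF,h) - (r-1)\, c_1(\cF,h)^2 \;=\; \tfrac{r}{4\pi^2}\, \tr\!\bigl(F_h^{0}\wedge F_h^{0}\bigr) \]
of differential $(2,2)$-forms on $Z$.

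For the final step, taking the $\omega$-trace of the Hermite-Einstein equation forces $\sqrt{-1}\,\Lambda_\omega F_h^{0} = 0$, so that in a local unitary frame of $\cF$ every matrix entry of $F_h^{0}$ is an $\omega$-primitive $(1,1)$-form on $Z$. Combining the anti-Hermitian symmetry of the curvature entries in such a frame with the scalar Hodge-Riemann identity $\alpha \wedge \bar\alpha \wedge \omega^{n-2} = -\tfrac{(n-2)!}{n!}|\alpha|^2\,\omega^n$ for primitive $(1,1)$-forms $\alpha$, one concludes that $\tr\bigl(F_h^{0} \wedge F_h^{0}\bigr) \wedge \omega^{n-2}$ is a positive multiple of $|F_h^{0}|^2_{\omega,h}\, \omega^n$. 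Integrating over $Z$ then gives the claimed inequality, with equality precisely when $F_h^{0} \equiv 0$, i.e. when $\cF$ is projectively flat.

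The main obstacle I anticipate is the last step, where the Hodge-Riemann positivity has to be lifted from scalar primitive $(1,1)$-forms to endomorphism-valued ones. The bookkeeping of signs arising from the anti-Hermitian symmetry of $F_h^{0}$ and from the behavior of primitive $(1,1)$-forms under the Hodge star is delicate, and one must carefully check that $\omega$-primitivity of $F_h^{0}$ as an $\operatorname{End}(\cF)$-valued form does pass entry-by-entry to a unitary frame so that the scalar formula applies coherently. By contrast, the existence of a smooth Hermite-Einstein metric (via Donaldson-Uhlenbeck-Yau) and the algebraic rearrangement of the Chern-Weil forms are by now completely standard.
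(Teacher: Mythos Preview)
Your proof is correct: this is precisely the L\"ubke inequality combined with the Donaldson--Uhlenbeck--Yau theorem, and the sign bookkeeping you worry about works out as you indicate (in a unitary frame the anti-Hermitian condition gives $\tr\bigl((F_h^0)^2\bigr)=-\sum_{i,j}\theta_{ij}\wedge\overline{\theta_{ij}}$, and Hodge--Riemann for primitive $(1,1)$-forms then yields the nonnegative sign after wedging with $\omega^{n-2}$).

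The paper, however, treats this theorem as a classical background result and does not give a self-contained proof; its discussion sketches the \emph{algebraic} route: Bogomolov's original argument in the surface case, followed by the Mehta--Ramanathan restriction theorem to cut down to a general complete-intersection surface in higher dimensions. Your analytic approach is the other standard proof, which the paper also mentions (citing L\"ubke and Uhlenbeck--Yau). The algebraic approach has the advantage of being purely coherent-sheaf-theoretic and working for torsion-free sheaves without metrics, while your analytic route gives the extra information that equality forces projective flatness --- which is in fact the theme of the present paper in the singular setting.
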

When  $Z$ is a surface, the inequality was proved in \cite{Bogomolov1978}. 
In higher dimensions, one may apply Mehta-Ramanathan   theorem in  \cite{MehtaRamanathan1981/82}   to  reduce to the case of surfaces, by taking hyperplane  sections. 
Later in \cite{Kawamata1992}, as a part of the proof for the three-dimensional abundance theorem, Kawamata extended the inequality to orbifold Chern  classes of  reflexive sheaves on projective surfaces with quotient singularities. 
The technique of taking hypersurface sections then  allows us to deduce Bogomolov-Gieseker inequalities for reflexive sheaves on projective varieties which have quotient singularities in codimension 2.

On the analytic side, let $(Z,\omega)$ be a compact  K\"ahler manifold, and $(\cF, h)$ a Hermitian  holomorphic vector bundle on $Z$.  
L\"ubke proved that  if $h$ satisfies the Einstein condition, then the following inequality holds (see \cite{Lub82}),
\[   \int_Z \Big(c_2(\cF,h)-\frac{r-1}{2r}c_1(\cF,h)^2 \Big)  \wedge  \omega^{n-2} \geq 0. \] 
It is now well understood that if $\cF$ is slope stable, then it admits a Hermitian-Einstein metric. 
The case when $Z$ is a complete curve was proved by Narasimhan-Seshadri  in \cite{NarasimhanSeshadri1965}, the case of projective surfaces was proved by Donaldson in \cite{Donaldson1985}, and the case of arbitrary compact K\"ahler manifolds was proved by Uhlenbeck-Yau in \cite{UhlenbeckYau1986}.   
Simpson extended the existence of  Hermitian-Einstein  metric to stable Higgs bundles, on compact and certain non compact K\"ahler manifolds, see \cite{Simpson1988}. 
Furthermore,  in \cite{BandoSiu1994},  Bando-Siu introduced the notion of admissible metrics and proved the existence of admissible  Hermitian-Einstein  metrics on stable reflexive sheaves.

For compact K\"ahler varieties which has quotient singularities,  an orbifold version of Donaldson-Uhlenbeck-Yau theorem was proved by Faulk in \cite{Faulk2022}.    
If the variety $Z$ has quotient singularities only in codimension 2,  
in \cite{Ou2024}, the second author constructed a projective bimeromorphic map $\rho\colon X\to Z$, so that $X$ has quotient singularities, and the indeterminacy locus of $\rho^{-1}$ has codimension at least $3$ in $Z$. 
By using Faulk's theorem on $X$, 
we can then deduce Bogomolov-Gieseker inequalities for orbifold Chern classes of stable reflexive sheaves on $Z$. 

When the variety $Z$ is smooth, the theorem of  Donald-Uhlenbeck-Yau also characterizes the condition when the equality holds in the Bogomolov inequalities.  
This part was not proved in \cite{Ou2024} for singular spaces. 
We focus on this problem in this paper, and  prove the following theorem.  
For the precise definition of the orbifold Chern classes $\hat{c}$, we refer to \cite[Section 9]{Ou2024}. 

\begin{thm}
\label{thm:unitary-flat} 
Let $(Z,\omega)$ be a compact K\"ahler variety of dimension $n$ with klt singularities, 
and let $\mathcal{F}$ be a $\omega$-stable reflexive sheaf on $Z$. 
Then the following two conditions are equivalent. 
\begin{enumerate}
    \item $\hat{c}_2(\mathcal{F})\cdot [\omega]^{n-2} = \hat{c}_1(\mathcal{F})^2  \cdot [\omega]^{n-2} = 0$.
    \item  There is a finite quasi-\'etale cover $p\colon Z'\to Z$, such that the reflexive pullback $(p^*\mathcal{F})^{**}$ is a unitary flat vector bundle. 
\end{enumerate}
\end{thm}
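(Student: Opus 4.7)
The plan is to adapt the classical Kobayashi–L\"ubke projective-flatness argument to the singular setting, reducing to a K\"ahler variety with quotient singularities via the modification of \cite{Ou2024} and then descending to a finite quasi-\'etale cover of $Z$. The implication (2) $\Rightarrow$ (1) is the easy direction: a unitary flat vector bundle has vanishing real Chern classes, and the orbifold Chern classes of \cite{Ou2024} are compatible with finite quasi-\'etale pullback (with the usual degree factor), so the Chern-number vanishings on $Z'$ descend to $Z$.

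For (1) $\Rightarrow$ (2), I would first pull everything back through the projective bimeromorphic map $\rho\colon X\to Z$ provided in \cite{Ou2024}, where $X$ has only quotient singularities and $\rho$ is an isomorphism outside a closed subset of codimension at least three in $Z$. Setting $\mathcal{E} := (\rho^{*}\mathcal{F})^{**}$ and choosing a K\"ahler class $\alpha$ on $X$ adapted to $\omega$, one checks that $\mathcal{E}$ is $\alpha$-stable with $\hat{c}_1(\mathcal{E})^2 \cdot \alpha^{n-2} = \hat{c}_2(\mathcal{E})\cdot \alpha^{n-2} = 0$. Faulk's orbifold Donaldson–Uhlenbeck–Yau theorem then produces an admissible orbifold Hermitian–Einstein metric $h$ on $\mathcal{E}$, and the equality case of the pointwise L\"ubke-type inequality underlying the orbifold Bogomolov–Gieseker bound of \cite{Ou2024} forces the trace-free part of the curvature of $h$ to vanish. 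Combined with $\hat{c}_1(\mathcal{E})^2 \cdot \alpha^{n-2} = 0$ applied to the induced Hermitian–Einstein metric on $\det\mathcal{E}$, a further integration-by-parts argument should rigidify the central component of the curvature as well, yielding that $(\mathcal{E},h)$ is unitary flat on the orbifold smooth locus of $X$.

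The third and most delicate step is to convert this analytic flat structure into a finite algebraic quasi-\'etale cover of $Z$. The flat connection produces a unitary representation of the orbifold fundamental group of $X^\circ$, where $X^\circ$ is the complement of a codimension $\geq 2$ closed subset of $X$. Invoking finiteness results for the local and global \'etale fundamental groups of klt singularities (for instance, Braun's finiteness of local fundamental groups of klt singularities, and the structural results of Greb–Kebekus–Peternell), I would produce a finite quasi-\'etale cover $X' \to X$ trivializing the orbifold structure, so that the pullback of $\mathcal{E}$ to $X'$ becomes a genuine unitary flat holomorphic vector bundle; using that $\rho$ is an isomorphism in codimension $2$ together with Hartogs' extension for reflexive sheaves, one then descends $X' \to X$ to a finite quasi-\'etale cover $p\colon Z' \to Z$ on which $(p^*\mathcal{F})^{**}$ inherits the flat unitary structure. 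The main obstacle I expect is this last descent step: ensuring that the cover of $X$ can be chosen compatibly with $\rho$ so as to come from an honest quasi-\'etale cover of $Z$, and checking that the analytic flat structure genuinely extends across the entire singular locus of $Z'$ rather than only across the orbifold smooth locus.
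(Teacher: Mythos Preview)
Your approach for (2) $\Rightarrow$ (1) is fine and matches the paper's treatment. The gap is in (1) $\Rightarrow$ (2), specifically in the sentence ``choosing a K\"ahler class $\alpha$ on $X$ adapted to $\omega$, one checks that $\mathcal{E}$ is $\alpha$-stable with $\hat{c}_1(\mathcal{E})^2\cdot\alpha^{n-2} = \hat{c}_2(\mathcal{E})\cdot\alpha^{n-2} = 0$.'' No such K\"ahler class $\alpha$ exists. The class $\rho^*[\omega]$ is the one for which the Chern numbers vanish (and stability holds), but it is only semipositive on $X$: the map $\rho$ contracts an exceptional locus, so $\rho^*\omega$ is degenerate there, and Faulk's orbifold Donaldson--Uhlenbeck--Yau theorem does not apply. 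For any genuine K\"ahler class $\alpha = \rho^*[\omega] + \epsilon\beta$ on $X$, the Chern numbers are $O(\epsilon)$, not zero, so the equality case of the L\"ubke inequality yields nothing.

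The paper's solution is precisely to confront this: one runs Faulk's theorem for each $\omega_\epsilon = \rho^*\omega_Z + \epsilon\,\omega_{\orb}$, obtaining Hermitian--Einstein metrics $h_{\epsilon,HE}$, and then proves that as $\epsilon\to 0$ these metrics converge locally smoothly on a large open set to a limit $h_\infty$ that is Hermitian--Einstein with respect to the \emph{degenerate} form $\rho^*\omega_Z$. Only for $h_\infty$ does the L\"ubke integrand vanish identically. Establishing this convergence is the bulk of the paper: one needs uniform geometric estimates on the degenerating family $(X,\omega_\epsilon)$ (Sobolev inequalities, heat-kernel bounds, obtained via the Guo--Phong--Song--Sturm machinery) and then uniform $\mathcal{C}^0$ bounds on the $h_{\epsilon,HE}$ via mean-value inequalities and a Simpson-type blow-up argument that exploits the stability of $\mathcal{F}$. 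Your outline simply skips this limiting procedure, which is the heart of the argument. By contrast, the descent step you are worried about is handled relatively quickly: the flat metric lives on an open set of $Z$ with complement of codimension at least $2$, and the analytic analogues of the Braun and Greb--Kebekus--Peternell finiteness theorems (established earlier in the paper) produce the quasi-\'etale cover directly.
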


We outline the proof as follows. 
We follow the method of \cite{CGNPPW}, and  combine it with the work of \cite{GPSS}. 
By taking an appropriate bimeromorphic map $\rho\colon X\to Z$ constructed in \cite{Ou2024}, 
we may assume there is an orbifold structure $\mathfrak{X}$ on $X$, so that the pullback $\mathcal{E} := \rho^*\mathcal{F}$ induces an orbifold vector bundle $\mathcal{E}_{\orb}$ on $\mathfrak{X}$. 
Then there is a sequence of orbifold K\"ahler forms $\{\omega_i\}$, which converges to $\rho^*\omega_Z$. 
We may identify each $\omega_i$ as a K\"ahler current on $X$. 
Faulk's theorem implies that there is an orbifold Hermitian-Einstein metric $h_i$  on $\mathcal{E}$ with respect to $\omega_i$.  
If we can prove that such a sequence $\{h_i\}$ converges to some Hermitian-Einstein metric $h_\infty$ with respect to $\rho^*\omega_Z$, then we can deduce Theorem \ref{thm:unitary-flat} by classic curvature calculus.  
We notice that, the Einstein condition is expressed as an elliptic PDE. 
Therefore, if we can obtain certain uniform $L^\infty$ bounds on $\{h_i\}$, 
then we can conclude by  using classic elliptic analysis. 
In order to get such $L^\infty$ bounds, there are two main ingredients. 
The first one is uniform geometric estimates on $(X,\omega_i)$, which was essentially proved in a   series of recent groundbreaking  works: \cite{GPS24},  \cite{GPSS24} and \cite{GPSS}. 
Such estimates are highly non trivial, since the family $\{\omega_i\}$ is degenerating. 
The second main ingredient is essentially proved in the enlightening paper \cite{CGNPPW}. 
Up to renormalizing $h_i$, we can control $\|h_i\|_{L^\infty}$ by $\|h_i\|_{L^1}$,  uniformly in $i$.   
In the end, we can adapt the method of \cite{Simpson1988} to obtain the desired convergence.

There are other approaches to  Bogomolov-Gieseker inequalities and Hermitian-Einstein metrics on singular spaces, see for example \cite{Wu21},   \cite{ChenWentworth2024},  \cite{Chen2025} and \cite{GuenanciaPaun2024}.  
In particular, when $X$ is a klt threefold,  \cite{GuenanciaPaun2024} 
proved  Bogomolov-Gieseker inequalities as well as 
the equality conditions, with a different method. On the other hand, in a very interesting recent preprint \cite{ZZZ2025} (c.f. see also \cite{LZZ17}), the orbifold Bogomolov-Gieseker  inequality is obtained for semi-stable Higgs sheaves on compact K\'ahler varieties with klt singularities. 
It will be interesting to characterize the equality condition there.

The paper is organized as follows. 
In Section \ref{section:preliminaries}, we introduce the notation for the paper, and recall some known results, such as finiteness of fundamental groups for klt singularities, 
and Simpson's operations in his paper \cite{Simpson1988}. 
In Section \ref{section:uniform-kahler}, we establish uniform geometric estimates for a degenerating family of orbifold smooth K\"ahler forms, following the method of \cite{GPSS}. 
In the last section, we establish  mean value type inequalities as in \cite{CGNPPW}, and finish the proof of the main theorem. 
\\

\noindent\textbf{Acknowledgment.} 
The authors are grateful to professor Bin Guo, Jian Song,  Chuanjing Zhang for conversations.  Xin Fu is  supported by National Key R\&D Program of China 2024YFA1014800 and NSFC No. 12401073. Wenhao Ou is supported by the National Key R\&D Program of China (No. 2021YFA1002300).

\section{Preliminaries}\label{section:preliminaries}
We  fix some notation and prove some elementary results in this section.

\subsection{Complex analytic varieties and complex orbifolds}    
A  complex analytic variety  $X$
is a reduced and irreducible  complex analytic space.   
We will denote by $X_{\sm}$ its smooth locus and by $X_{\sing}$ its singular locus.  
A smooth complex analytic variety is also called a complex manifold.    
A complex analytic variety $X$ is said to have klt singularities, 
if for every point $x\in X$, 
there is a neighborhood $U$ of $x$ and a  divisor $\Delta$ on $U$ such that $K_U+\Delta$ is $\mathbb{Q}$-Cartier and $(U,\Delta)$ is a klt pair in the sense of \cite[Definition 2.34]{KollarMori1998}.

We refer to \cite{Grauert1962} for the notion of K\"ahler spaces.    
Assume that $(X,\omega)$ is a  K\"ahler  manifold. 
We will denote by $\Lambda$ the contraction with $\omega$. 
Then the Laplace-Beltrami operator $\Delta$ with respect $\omega$ satisfies 
$\Delta =  2\sqrt{-1}\Lambda\partial\bar{\partial}$.   
We denote 
\[\Delta'=\frac{1}{2}\Delta =  \sqrt{-1}\Lambda\partial\bar{\partial}. \]

Let $X$ be a complex analytic variety of dimension $n$,  
and let $\omega$ be some closed positive $(1,1)$-current with bounded local potentials on $X$. 
Assume that $\omega$ is continuous on some dense Zariski open subset $U\subseteq X$.  
Then we will denote by $\int_{(X,\omega)}$ the integration over $U$ with respect to the volume form $\omega^n$

A complex orbifold $\mathfrak{X}$ with quotient space $X$ is defined by the following data. 
There is an open covering $\{U_i\}$ of $X$, there are complex manifolds $V_i$, there are  finite groups $G_i$ acting holomorphically on $V_i$, such that $V_i/G_i \cong U_i$. 
We require further that the $(V_i,G_i)$ are compatible along the overlaps. 
For more details, we refer to, for example,   \cite[Section 3.1]{DasOu2023}.  
Throughout this paper, we always assume that the actions of $G_i$ are faithful. 
We call the branched locus of the orbifold structure  $\mathfrak{X}$ the subset $Z\subseteq X$, over which the natural morphisms $V_i\to X$ are branched. 
The set $Z$ is always a closed analytic subset of $X$.   
An object $h$, for example a current, a function, etc., is called orbifold smooth, if $h|_{X\setminus Z}$ is smooth, and if the pullback of $h|_{X\setminus Z}$  on any orbifold chart $V_i$ extends to a  smooth object on $V_i$.


A finite morphism $f\colon X\to Y$ between  complex analytic varieties is called quasi-\'etale, if it is surjective and \'etale over an open subset of $Y$, 
whose complement has codimension at least 2. 
The following two  results on finite morphisms   are very useful for this paper. 

\begin{thm}
\label{thm:GR-cover} 
Let $X$ be a complex analytic variety,  
and let $X^\circ \subseteq X$ be a dense Zariski open subset.  
Assume that $X^\circ$ is normal and we have a finite \'etale morphism $p\colon Y^\circ \to X^\circ$. 
Then $p$ extends to a finite morphism $p\colon Y\to X$ with $Y$ normal, which is  unique up to isomorphism.
\end{thm}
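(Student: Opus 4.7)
The statement is the classical Grauert-Remmert-type extension theorem for finite \'etale covers, and my plan is to split the argument into three steps: uniqueness by a fiber-product argument, reduction to the case where $X$ is normal via the normalization morphism, and the core local analytic extension.

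For uniqueness, suppose $p_i\colon Y_i \to X$ ($i = 1,2$) are two finite extensions of $p$ with $Y_i$ normal. I would form the fiber product $Y_1 \times_X Y_2$ and let $Z$ denote the analytic closure of the graph of the canonical isomorphism $p_1^{-1}(X^\circ) \cong p_2^{-1}(X^\circ)$. The two projections $Z \to Y_i$ are then finite and birational, and by normality of the $Y_i$ together with the analytic Zariski Main Theorem, each projection is an isomorphism. This yields the desired isomorphism $Y_1 \cong Y_2$ over $X$.

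For existence, I would first reduce to the case that $X$ itself is normal. Let $\nu\colon \tilde X \to X$ denote the normalization morphism; it is finite, and since $X^\circ$ is assumed normal, $\nu$ restricts to an isomorphism over $X^\circ$. In particular $X^\circ$ embeds into $\tilde X$ as a dense Zariski open subset. If I can extend $p$ to a finite morphism $\tilde p\colon \tilde Y \to \tilde X$ with $\tilde Y$ normal, then the composition $\nu \circ \tilde p\colon \tilde Y \to X$ provides the sought extension. So it suffices to treat the case where $X$ is normal.

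For the main step, set $A \defeq X\setminus X^\circ$, a nowhere dense analytic subset of the normal space $X$. The push-forward $p_*\cO_{Y^\circ}$ is a locally free sheaf of $\cO_{X^\circ}$-algebras of finite rank $r$. Locally around a point of $A$, the sections of this sheaf satisfy monic polynomial equations whose coefficients are elementary symmetric functions of the sheet values; these coefficients are holomorphic on $X^\circ$ and bounded near $A$, hence by Riemann's second extension theorem on the normal space $X$ they extend holomorphically across $A$. Carrying this out gives a coherent sheaf of $\cO_X$-algebras $\sA$ extending $p_*\cO_{Y^\circ}$; its analytic spectrum followed by normalization produces the required finite normal morphism $Y \to X$. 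The main obstacle is the analytic content of Grauert-Remmert itself, namely verifying that the local extensions glue to a globally coherent $\sA$ and that $\mathrm{Spec}_{\mathrm{an}}(\sA)$ remains finite over $X$; this is the nontrivial ingredient and in practice I would invoke it as a standard result rather than reprove it from scratch.
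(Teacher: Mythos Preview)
Your proposal is correct and follows essentially the same route as the paper: reduce to the case where $X$ is normal via the normalization (which is an isomorphism over $X^\circ$), and then invoke the classical Grauert--Remmert extension theorem for the normal case. The paper simply cites \cite[Th\'eor\`eme XII.5.4]{SGA1} for that step and does not separately spell out the uniqueness argument, whereas you supply both a sketch of the analytic extension and an explicit fiber-product proof of uniqueness.
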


\begin{proof} 
Let $r\colon X'\to X$ be the normalization. 
By assumption, $r$ is an isomorphism over $X^\circ$. 
Hence up to replacing $X$ by $X'$, we may assume that $X$ is normal. 
In this case, the theorem is proved in {\cite[Th\'eor\`eme XII.5.4]{SGA1}}.  
\end{proof} 
\begin{lemma}
    \label{lemma:smooth-finite-cover}
Let $Y\subseteq \mathbb{C}^n$ be an open ball centered at the origin, and let $\Delta$ be a divisor on $Y$ which is the union of some coordinate hyperplanes. 
Assume that $\pi\colon W\to Y$ is a finite surjective morphism which is \'etale over $Y\setminus \Delta$. 
Then there is a finite morphism $p\colon Y'\to Y$ such that the following properties hold. 
There is an endomorphism $\rho\colon \mathbb{C}^n\to \mathbb{C}^n$, which can be written in coordinates as 
\[
\rho\colon (z_1,...,z_n) \mapsto (z_1^{a_1},...,z_n^{a_n})  \mbox{ for some integers } a_1,...,a_n>0,  
\]
such that $Y'=\rho^{-1}(Y)$ and $p=\rho|_{Y'}$. 
In addition, the morphism $p$ factors through $\pi$.  
\end{lemma}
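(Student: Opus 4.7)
The plan is to prove this via a version of Abhyankar's lemma, exploiting the description of the fundamental group of the complement of a union of coordinate hyperplanes.

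After reordering coordinates, I may assume $\Delta = \{z_1 \cdots z_k = 0\} \cap Y$ for some $1 \leq k \leq n$; the case $k = 0$ is trivial since then $\pi$ is étale over the simply connected ball $Y$. Setting $Y^* \defeq Y \setminus \Delta$, the inclusion of a small real $k$-torus around the origin shows that $\pi_1(Y^*) \cong \mathbb{Z}^k$, with generators given by small loops around each hyperplane $\{z_i = 0\}$ for $1 \leq i \leq k$. The finite étale cover $W^* \defeq \pi^{-1}(Y^*) \to Y^*$ is classified by the $\pi_1(Y^*)$-action on the fibre $S$ over a chosen basepoint. Let $K \leq \mathbb{Z}^k$ be the intersection of all point-stabilizers of this action. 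Then $K$ has finite index in $\mathbb{Z}^k$, and writing $d \defeq [\mathbb{Z}^k : K]$, one has $d \mathbb{Z}^k \subseteq K$. I will choose $a_1 = \cdots = a_k = d$ and $a_{k+1} = \cdots = a_n = 1$. The resulting étale cover $\rho|_{Y'^*} \colon Y'^* \to Y^*$ is connected and classified by the sublattice $d \mathbb{Z}^k \subseteq \pi_1(Y^*)$; since this sublattice is contained in the stabilizer of every point of $S$, covering theory supplies a morphism $q^* \colon Y'^* \to W^*$ over $Y^*$.

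Next I will extend $q^*$ to a morphism $q \colon Y' \to W$. Form the fibre product $U \defeq W \times_Y Y'$, with projection $\pi_U \colon U \to Y'$; this is finite over $Y'$ and étale over $Y'^*$. The graph of $q^*$ defines a section of $\pi_U$ over $Y'^*$, and its image $U_0^*$ is open in $\pi_U^{-1}(Y'^*)$ because $\pi_U$ is locally an isomorphism there. Since $Y'^*$ is connected, $U_0^*$ is in fact a full connected component of $\pi_U^{-1}(Y'^*)$, and hence also closed. Its closure $U_0$ in $U$ is therefore finite over $Y'$ and restricts to an isomorphism over $Y'^*$. Applying Theorem \ref{thm:GR-cover} to the identity cover $Y'^* \to Y'^*$, the normalization $\widetilde{U_0}$ of $U_0$ must coincide with $Y'$ itself as a finite normal cover of $Y'$. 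The composition $Y' \xrightarrow{\sim} \widetilde{U_0} \to U_0 \hookrightarrow U \to W$ then provides the desired morphism $q$, and $\pi \circ q = p$ follows by continuity from agreement on the dense subset $Y'^*$.

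I expect the main difficulty to lie in the extension step rather than in the monodromy calculation: since $W$ is not assumed to be normal and the cover $\pi$ is genuinely ramified along the codimension-one locus $\Delta$, one cannot invoke Theorem \ref{thm:GR-cover} directly to extend $q^*$ to a morphism into $W$. The fibre-product construction above is designed to convert the extension problem for $q^*$ into the trivial extension problem for a section, which the normality of the smooth variety $Y'$ then resolves.
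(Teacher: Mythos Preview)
Your proof is correct and follows essentially the same Abhyankar-type strategy as the paper: compute $\pi_1(Y\setminus\Delta)\cong\mathbb{Z}^k$, choose $d$ so that $d\mathbb{Z}^k$ lies in the relevant monodromy subgroup, and then extend the resulting map $Y'^*\to W^*$ across the branch locus using Theorem~\ref{thm:GR-cover}.

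The one point worth flagging is your extension step. Your fibre-product/normalization argument is valid, but your stated reason for it is a misreading of Theorem~\ref{thm:GR-cover}: that theorem only asks that the \emph{open} subset $X^\circ$ be normal, not the ambient space $X$. Here $W^\circ=\pi^{-1}(Y\setminus\Delta)$ is \'etale over a smooth ball, hence smooth, so one may apply Theorem~\ref{thm:GR-cover} directly with $X=W$ and $X^\circ=W^\circ$ to extend $Y'^*\to W^*$ to a finite morphism $Y''\to W$ with $Y''$ normal. Composing with $\pi$ gives a finite morphism $Y''\to Y$ extending $Y'^*\to Y^*$, and the uniqueness clause of Theorem~\ref{thm:GR-cover} (now applied with $X=Y$) forces $Y''\cong Y'$. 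This is exactly how the paper proceeds, and it is shorter than your route through $W\times_Y Y'$. Your more careful treatment of the monodromy via the action on the whole fibre (rather than a single subgroup) does have the minor advantage of handling disconnected $W$ without further comment.
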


\begin{proof}
Without loss of the generality, we may assume that $\Delta$ is defined by  $z_1\cdots z_k=0$ for some integer $1 \le k \le n$.
Let $Y^\circ = Y\setminus \Delta$  and $W^\circ = \pi^{-1}(Y^\circ)$. 
Then the fundamental group   $\pi_1(Y^\circ)$ is isomorphic to $\mathbb{Z}^k$, which is generated by the loops $\gamma_1,...,\gamma_k$ around a general point  the components of $\Delta$.  
Moreover, the morphism $\pi|_{W^\circ}$ corresponds to a subgroup $H$ of  $\pi_1(Y^\circ)$ which has finite index. 
It follows that the subgroup $H'$ generated by $\gamma_1^d,...,\gamma_k^d$ is contained in $H$ for some integer $d>0$ sufficiently large.  

We impose $a_1=\cdots = a_k = d$ and $a_{k+1}=\cdots =a_n=1$ in the definition of $\rho$. 
Let $Y'^\circ = \rho^{-1}(Y^\circ)$. 
Then the natural surjective morphism  $Y'^\circ \to Y^\circ$ is \'etale and corresponds to the subgroup $H'$ of $ \pi^{-1}(Y^\circ)$.  
Hence it factors through $W^\circ$. 
By Theorem \ref{thm:GR-cover}, there is a normal variety   $Y''$ with a finite morphism  $Y''\to W$ which extends $Y^{'\circ} \to W^\circ$.  
It is also clear that the natural finite morphism $Y''\to Y$ extends  $Y^{'\circ} \to Y^\circ$.  
By the uniqueness of Theorem \ref{thm:GR-cover}, we can identify  $Y''$ as   $Y'=\rho^{-1}(Y)$.  
This completes the proof of the lemma.
\end{proof}

We also need the following result on extensions of coherent subsheaves.

\begin{lemma}\label{lemma:extension-subsheaf} 
Let $Z$ be a normal complex analytic variety and $\cF$ a reflexive coherent sheaf. 
Let $Z^\circ \subseteq Z$ be a Zariski open subset whose complement has codimension at least 2, and $\cE \subseteq \cF|_{Z^\circ}$ a saturated coherent subsheaf. 
Then $\cE$ extends to a coherent saturated subsheaf  of $\cF$  on $Z$.
\end{lemma}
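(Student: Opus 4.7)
The plan is to construct the desired extension as the kernel of a morphism $\varphi\colon \cF \to \cR$, where $\cR$ is a reflexive coherent sheaf on $Z$ obtained by extending the reflexive hull of $\cF|_{Z^\circ}/\cE$ across $Z \setminus Z^\circ$. In this way, all the required properties—coherence, agreement with $\cE$ on $Z^\circ$, and saturation—will follow simultaneously from functorial considerations and the torsion-freeness of reflexive sheaves.

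First, I set $\cQ^\circ := \cF|_{Z^\circ}/\cE$. Since $\cE$ is saturated in $\cF|_{Z^\circ}$, the quotient $\cQ^\circ$ is torsion-free, so the canonical map $\cQ^\circ \hookrightarrow (\cQ^\circ)^{**} =: \cR^\circ$ into the reflexive hull is injective, and $\cR^\circ$ is a reflexive coherent sheaf on $Z^\circ$. Since $Z$ is normal and $Z \setminus Z^\circ$ has codimension at least two, the classical extension theorem for reflexive coherent sheaves on normal complex analytic varieties (due to Siu and Trautmann) provides a unique reflexive coherent extension $\cR := j_*\cR^\circ$ of $\cR^\circ$ to $Z$, where $j\colon Z^\circ \hookrightarrow Z$ denotes the inclusion.

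Next, the composition $\varphi^\circ\colon \cF|_{Z^\circ} \twoheadrightarrow \cQ^\circ \hookrightarrow \cR^\circ$ is a section of $\mathcal{H}om(\cF, \cR)|_{Z^\circ}$. Since $\cR$ is reflexive and $\cF$ is coherent on the normal variety $Z$, the sheaf $\mathcal{H}om(\cF, \cR)$ is itself reflexive, and in particular satisfies Serre's condition $S_2$. Consequently, the section $\varphi^\circ$ extends uniquely to a global morphism $\varphi\colon \cF \to \cR$ on $Z$.

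Finally, I set $\cE' := \ker(\varphi)$, which is a coherent subsheaf of $\cF$ with $\cE'|_{Z^\circ} = \ker(\varphi^\circ) = \cE$. The morphism $\varphi$ induces an injection $\cF/\cE' \hookrightarrow \cR$, and since $\cR$ is reflexive hence torsion-free, the quotient $\cF/\cE'$ is torsion-free as well, which means $\cE'$ is saturated in $\cF$. The main technical point is the availability of the reflexive-extension and $\mathcal{H}om$-extension results in the complex analytic (as opposed to the purely algebraic) setting; both are standard consequences of $S_2$-type depth arguments combined with classical coherence-extension theorems, but should be cited with care.
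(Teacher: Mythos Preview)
Your proof is correct and takes a genuinely different route from the paper's.

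The paper argues locally: it embeds $\cF$ into a free sheaf, shrinks $Z^\circ$ so that $\cE$ is a subbundle, then uses the induced morphism $Z^\circ \to \Gr(n,m)$ and Siu's extension theorem for meromorphic maps \cite{Siu1975} to get a meromorphic map from $Z$; the extension of $\cE$ is then obtained by pulling back the universal subbundle to the graph and pushing down. Your approach is more sheaf-theoretic: you extend the reflexive hull $\cR^\circ$ of the quotient $\cF|_{Z^\circ}/\cE$ to a reflexive sheaf $\cR$ on $Z$ via $j_*$, extend the quotient morphism using reflexivity of $\sHom{\cF}{\cR}$, and take the kernel.

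Each approach leans on a different extension theorem of Siu: the paper uses extension of meromorphic maps into projective targets, while you use the coherence of $j_*$ for reflexive (hence $S_2$) sheaves across codimension~$\ge 2$ subsets, which in the analytic category goes back to Siu--Trautmann and Frisch--Guenot. Your argument has the pleasant feature that saturation of the extension is automatic, since $\cF/\cE' \hookrightarrow \cR$ with $\cR$ torsion-free; in the paper's proof, saturation is not addressed explicitly and would require an extra (easy) step. On the other hand, the paper's Grassmannian method is more geometric and avoids having to cite the analytic coherence-extension package. As you yourself note, the only point requiring care in your write-up is a precise reference for the coherence of $j_*\cR^\circ$ in the analytic setting.
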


\begin{proof}
It is enough to prove the extension locally on $Z$. 
Hence by embedding $\cF$ in a free coherent sheaf as a saturated subsheaf, we may assume that $\cF$ is free. 
Furthermore, by removing from $Z^\circ$ some analytic subset of codimension at least two, 
we may assume that $\cE$ is a subbundle of $\cF|_{Z^\circ}$. 
Then there is an induced  morphism $f\colon Z^\circ \to  M$, where $M=G(n,m)$ is the Grassmannian variety, with $n=\mathrm{rank}\, \cF$ and  $m=\mathrm{rank}\, \cE$.  
By applying \cite[Main Theorem]{Siu1975} to the normal variety $Z$, as explained in \cite[page 441]{Siu1975},  we obtain that $f$ extends to a meromorphic map from $Z$ to $M$. 
Let $\Gamma \subseteq Z\times M$ be the closure of the graph of $f$, 
and we denote   $p_1\colon \Gamma \to Z$ and $p_2\colon \Gamma \to M$.  
Then $p_1$ induces a bimeromorphic map from $\Gamma$ to $Z$.

Let $U$ be the universal vector bundle on $M$. By pulling back to $\Gamma$ \textit{via} $p_2$, we get a subbundle   $\cG$ of $p_1^*\cF$ on $\Gamma$. 
Since   $p_1$ is proper, the direct image $(p_1)_* \cG$ is a coherent  subsheaf of $\cF$ on $X$, extending $\cE$.
This completes the proof of the lemma.
\end{proof}

\subsection{Local and regional fundamental groups of klt singularities}  

Thanks to the Minimal Model Program for projective morphism over a germ of a complex analytic variety, see \cite{DasHaconPaun2022} or \cite{Fujino2022}, 
we can establish the following theorems.   
The first one is on fundamental groups around a klt singularities.  
We recall that the \'etale fundamental group $\pi_1^{\acute{e}t}$ of a topological space is the profinite completion of the fundamental group $\pi_1$. 

\begin{thm}
\label{thm:klt-local}
Let $(x\in X)$ be a germ of  complex analytic variety such that $X$ has klt singularities.  
Then, up to shrinking $X$,  the regional fundamental group $\pi_1^{reg}(X)$ is finite. 
In another word, $\pi_1(X_{\sm})$ is finite. 
\end{thm}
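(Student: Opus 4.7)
The plan is to follow the algebraic approach to the finiteness of local fundamental groups of klt singularities, due to Xu (for the \'etale fundamental group) and to Braun (for the topological one), adapted here to the complex analytic setting via the analytic MMP for projective morphisms over a germ, cited just above the statement.

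First, I would reduce the theorem to a uniform degree bound on finite quasi-\'etale covers. By Theorem \ref{thm:GR-cover}, after shrinking $X$ around $x$, connected finite \'etale covers of $X_{\sm}$ are in bijection with connected finite quasi-\'etale covers $p\colon Y\to X$ with $Y$ normal, and the ramification formula shows that any such $Y$ is again klt. Hence the finiteness of $\pi_1(X_{\sm})$ is equivalent to the existence of a uniform bound on $\deg(p)$ across all such covers.

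Next, I would construct a Koll\'ar component over $x$: a projective bimeromorphic morphism $\mu\colon X'\to X$ whose exceptional locus is a single prime divisor $E\subseteq\mu^{-1}(x)$ such that $(X',E)$ is plt and $-(K_{X'}+E)$ is $\mu$-ample. Starting from a log resolution of $(X,0)$, one selects a prime divisor over $x$ of sufficiently small log discrepancy and then runs an appropriate MMP over $X$ to contract every other exceptional divisor; the termination and existence of the required contractions are supplied by the analytic MMP of \cite{DasHaconPaun2022,Fujino2022}. By adjunction, $(E,\Diff_E(0))$ is a projective klt pair with $-(K_E+\Diff_E(0))$ ample, so $E$ is a klt Fano-type variety.

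Finally, I would transfer the finiteness from $E$ to $X$. By Braun's theorem, the topological fundamental group of the klt pair $(E,\Diff_E(0))$, and in particular $\pi_1(E\setminus\Supp(\Diff_E(0)))$, is finite. A sufficiently small punctured neighborhood of $x$ in $X$ deformation retracts onto a tubular neighborhood of $E$ in $X'$ with $E$ removed; using the plt property of $(X',E)$, the boundary of this tubular neighborhood is a Seifert-type $S^1$-bundle over the orbifold $(E,\Diff_E(0))$, and a van Kampen argument shows that $\pi_1(X_{\sm})$ is, up to a finite group coming from the circle fibers and from the stabilizers of $\Diff_E(0)$, a quotient of $\pi_1(E\setminus\Supp(\Diff_E(0)))$. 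Combined with the previous step, this yields the required uniform degree bound and hence the finiteness of $\pi_1(X_{\sm})$.

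The main obstacle, I expect, is the clean execution of the two analytic steps: producing a Koll\'ar component with exactly one exceptional divisor and the plt/negativity properties using only the projective-over-a-germ analytic MMP, and controlling the topology around $E\subseteq X'$ through analytic tubular neighborhoods rather than through the contracting $\bbC^*$-actions available in the algebraic affine-cone setup. The latter comparison is the heart of Xu's argument in the algebraic case, and transplanting it to the complex analytic germ requires care but no genuinely new idea beyond the analytic MMP already cited.
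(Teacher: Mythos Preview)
Your proposal is correct and follows essentially the same approach as the paper. The paper's proof is more terse: it simply observes that the only place where algebraicity is used in the proofs of Xu and Braun is in extracting a Koll\'ar component via a plt blowup, that the required results from \cite{BCHM10} can be replaced by their analytic analogues from \cite{DasHaconPaun2022} or \cite{Fujino2022}, and that the remaining ``global'' part of the argument concerns a projective weakly Fano pair and hence carries over unchanged.
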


We note that, in the case when the singularity is algebraic, 
it is proved in \cite{Xu2014} that the local \'etale fundamental group $\pi_1^{\acute{e}t}(X\setminus \{x\})$ is finite. 
Later in \cite{Braun2021}, it is shown that the regional fundamental group is finite.

\begin{proof}
In the proofs of \cite[Theorem 1]{Xu2014} and of \cite[Theorem 1]{Braun2021},  
the assumption that the singularity is algebraic is to ensure the existence of plt blowups, 
which extracts    a Koll\'ar component, see \cite[Lemma 1]{Xu2014}. 
Once we get a Koll\'ar component, we can apply the local-global principal to conclude the finiteness theorems.  
The ``local'' part is the fundamental groups of klt singularities, 
and the ``global'' part is the fundamental groups of weakly Fano pairs.  
The tools for  the proof of plt blowups are the theorems in \cite{BCHM10}. 
More precisely, they are  
the existence of MMP  for projective birational morphisms on klt pairs (see \cite[Theorem 1.2]{BCHM10}), 
and the finite generation of log canonical rings (see \cite[Corollary 1.1.2]{BCHM10}). 
In the case of complex analytic varieties, we can apply \cite[Theorem 1.4]{DasHaconPaun2022} or \cite[Theorem 1.7]{Fujino2022} in the place of \cite[Theorem 1.2]{BCHM10}, 
and we can apply \cite[Theorem 1.3]{DasHaconPaun2022} or \cite[Theorem 1.8]{Fujino2022} in the place of \cite[Corollary 1.1.2]{BCHM10}. 
In particular, plt blowups exist on a germ of analytic klt singularity.  
We also note that the ``global'' part   remains the same even we pass to the analytic setting, since the underlying variety of a weakly Fano pair is always a projective variety.  
Hence, once we can extract a Koll\'ar component,  
the same argument in \cite[Theorem 1]{Braun2021} proves the theorem.  
\end{proof}

The following theorem  was proved in \cite[Theorem 1.5]{GrebKebekusPeternell2016b} in the case of projective variety.  
With Theorem \ref{thm:klt-local} in hand, we can adapt its method in the setting of complex analytic varieties.

\begin{thm}
\label{thm:klt-cover}
Let $X$ be a compact complex analytic variety with klt singularities.  
Then there is a finite quasi-\'etale cover $f\colon X' \to X$ such that the following property holds. 
If $\iota\colon X'_{\sm} \to X'$ is the natural inclusion, then the induced morphism  $\iota_* \colon  \pi_1^{\acute{e}t}(X'_{\sm}) \to  \pi_1^{\acute{e}t}(X')$  of \'etale fundamental groups 
is an isomorphism. 
\end{thm}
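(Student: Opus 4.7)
The proof I would attempt follows the strategy of \cite[Theorem 1.5]{GrebKebekusPeternell2016b}, with Theorem \ref{thm:klt-local} supplying the local input now available in the analytic setting. I would first observe that the map $\iota_*$ is always surjective: every finite \'etale cover of $X'$ restricts to a finite \'etale cover of $X'_{\sm}$, and this restriction functor is fully faithful on finite covers because $X'_{\sm}$ is dense in the normal space $X'$, so the induced map on \'etale fundamental groups is surjective. Combined with Theorem \ref{thm:GR-cover}, which identifies finite \'etale covers of $X'_{\sm}$ with finite quasi-\'etale covers of $X'$ (using that $X'$ is normal and that the complement of $X'_{\sm}$ has codimension at least $2$), asking that $\iota_*$ be an isomorphism is equivalent to asking that every finite quasi-\'etale cover of $X'$ already be \'etale.

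Next I would describe the kernel of $\iota_*$ through local fundamental groups at singular points. If $p \colon Y \to X'$ is a finite quasi-\'etale cover that fails to be \'etale at a point $y \in Y$ lying over $x' \in X'_{\sing}$, then near $y$ the morphism $p$ pulls back from a finite \'etale cover of a punctured neighborhood, and hence determines a nontrivial element of the regional fundamental group $\pi_1^{\acute{e}t}(U_{x',\sm})$; this group is finite by Theorem \ref{thm:klt-local}. Consequently $\ker(\iota_*)$ is the normal closure in $\pi_1^{\acute{e}t}(X'_{\sm})$ of the images of finitely many such finite local fundamental groups.

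The core of the proof is a maximality argument over the directed system of Galois finite quasi-\'etale covers $X' \to X$. Assuming a uniform upper bound is available for the orders $|\mathrm{Gal}(X'/X)|$, I would pick $f \colon X' \to X$ realizing this supremum. If some finite quasi-\'etale cover $Y \to X'$ failed to be \'etale, the composition $Y \to X$ would be quasi-\'etale of degree strictly larger than $|\mathrm{Gal}(X'/X)|$, and its Galois closure over $X$ would be a Galois finite quasi-\'etale cover of $X$ of strictly larger degree, contradicting maximality. Hence every finite quasi-\'etale cover of $X'$ would be \'etale, yielding the desired isomorphism $\iota_*$.

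The main obstacle is establishing the uniform bound on $|\mathrm{Gal}(X'/X)|$ needed for the maximality step. By Theorem \ref{thm:klt-local}, the local fundamental group $\pi_1^{\acute{e}t}(U_{x,\sm})$ is finite at each $x \in X$; in any quasi-\'etale cover $X' \to X$ the local fundamental groups at preimages of $x$ inject as subgroups, and compactness of $X$ therefore yields a uniform pointwise bound valid across the entire tower of Galois covers. The delicate step is passing from this pointwise control to a bound on the total Galois group. In the projective setting of \cite{GrebKebekusPeternell2016b} this is achieved via Lefschetz-type reductions to surfaces; in the present compact K\"ahler analytic setting one must instead adapt the local-global argument using Koll\'ar components, whose existence on germs of analytic klt singularities is guaranteed by the MMP results of \cite{DasHaconPaun2022} and \cite{Fujino2022} (exactly as was used in the proof of Theorem \ref{thm:klt-local}). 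Once this global boundedness is established, the remainder of the proof is the formal maximality argument outlined above.
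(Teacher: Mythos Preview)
There is a genuine gap in your maximality argument. You propose to bound $|\mathrm{Gal}(X'/X)|$ uniformly over all finite Galois quasi-\'etale covers of $X$ and then choose one of maximal degree, but no such bound exists in general: already when $X$ is smooth with infinite $\pi_1^{\acute{e}t}(X)$ (say an abelian variety, or any klt variety with infinite \'etale fundamental group) there are finite \'etale, hence quasi-\'etale, Galois covers of arbitrarily large degree. Pointwise control from Theorem~\ref{thm:klt-local} cannot help here, since the local fundamental groups at smooth points are trivial while the global degree is unbounded. The appeal to Koll\'ar components or Lefschetz-type reductions for a ``global boundedness'' is therefore misplaced; Koll\'ar components are needed only to establish the local statement Theorem~\ref{thm:klt-local}, and there is nothing further to extract from them for a global degree bound that simply does not hold.

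The paper avoids any global degree bound altogether. Arguing by contradiction, one builds an infinite tower $\cdots \to X_k \to \cdots \to X_0 = X$ of normal varieties with each $X_k\to X$ finite Galois quasi-\'etale and each step $\varphi_k\colon X_k\to X_{k-1}$ not \'etale. For a fixed $x\in X$, choose a neighborhood $U_x$ with $\pi_1((U_x)_{\sm})$ finite (Theorem~\ref{thm:klt-local}); the local degree $m_k$ of any connected component of the preimage of $U_x$ in $X_k$ is then nondecreasing in $k$ and bounded by $|\pi_1((U_x)_{\sm})|$, hence eventually constant, which forces $\varphi_{k+1}$ to be \'etale over $U_x$ for all large $k$. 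Compactness of $X$ (finitely many such $U_x$ suffice to cover $X$) then yields a single $N$ after which every $\varphi_k$ is \'etale everywhere, contradicting the construction of the tower. The essential difference from your approach is that the paper bounds \emph{local} degrees along a fixed infinite tower, not the global degree of a single cover.
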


\begin{proof} 
We first consider a sequence of finite Galois surjective morphisms 
\[
\cdots \to X_k \to \cdots \to X_0 = X,
\]
such that every variety $X_k$ is normal and every  morphism $\varphi_k\colon X_k\to X_{k-1}$ is quasi-\'etale. 
The Zariski's purity theorem then implies that $X_k\to X$ is finite \'etale over $X_{\sm}$. 
We claim that there is  an integer $N> 0$, such that $\varphi_k$ is \'etale over $X_{k-1}$ entirely for $k\geq N$. 
Let $x\in X$ be a point.  
Then by Theorem \ref{thm:klt-local}, there is an open neighborhood $U_x$ of $x$, 
such that  $\pi_1((U_x)_{\sm})$ is finite.  
Then for each $k$, there is a positive integer $m_k$, such that if $V\subseteq X_k$ is a connected component of the preimage of $U_x$, then the natural morphism $V\to U_x$ has degree $m_k$.  We note that $m_k$ is independent of the choice of $V$, since every $\varphi_k$ is Galois.  
It follows that $m_k$ is bounded from above by the order of $\pi_1((U_x)_{\sm})$.   
In addition, $m_{k+1}\ge m_k$ for any $k\ge 0$.  
As a consequence, there is some integer $N(x)>0$, such that if $k\geq N(x)$, then $m_k=m_{k+1}$. 
For such integers $k$, the morphism $\varphi_{k+1}$ is a trivial cover over $(U_x)_{\sm}$.  
It follows that $\varphi_{k+1}$ is a trivial cover over $U_x$. 
By compactness, we can cover $X$ by finitely many such open subsets $U_{x_1},...,U_{x_m}$. 
Let $N =  \max\{ N(x_1),...,N(x_m) \} + 1$. 
Then $\varphi_k$ is \'etale  for $k\geq N$.  

Now we return to the situation of the theorem. 
Since any connected \'etale cover of $X$ induces a connected \'etale cover of $X_{\sm}$, 
  the natural morphism  $\pi_1^{\acute{e}t}(X_{\sm}) \to \pi_1^{\acute{e}t}(X)$ is surjective.  
If it is not an isomorphism, then the kernel of it induces a  Galois finite \'etale morphism $Z \to X_{\sm}$ of degree greater than $1$.  
By Theorem \ref{thm:GR-cover}, it extends to a finite quasi-\'etale morphism $Y\to X$. 
This morphism is not \'etale by construction.   
Hence, if we assume by contradiction that such a finite cover  in the theorem does not exists, 
then by induction, we can construct  an infinite sequence of finite Galois morphisms 
\[
\cdots \to X_k \to \cdots \to X_0 = X,
\]
such that every  $X_k$ is normal and every   $\varphi_k\colon X_k\to X_{k-1}$ is quasi-\'etale but not \'etale.
We obtain a  contradiction to  the first  paragraph. 
This completes the proof of the theorem. 
\end{proof}

\subsection{Simpson's operations}    
Let  $\mathcal{E}$ be a holomorphic vector bundle on a K\"ahler manifold $(X,\omega)$, let $h$ a fixed smooth Hermitian metric on $\mathcal{E}$.  
There is a definite positive Hermitian form on the bundle $\mathcal{E}nd(\mathcal{E}) \cong \mathcal{E}^*\otimes \mathcal{E}$ defined by $\langle A,B\rangle = \mathrm{Tr}(AB^*)$, where $\mathrm{Tr}$ is the trace and $B^*$ is the adjoint of $B$ with respect to $h$.   
Let  $End(\mathcal{E})$ be the set of measurable endomorphism of $\mathcal{E}$, that is,  the set of measurable global section of $\mathcal{E}nd(\mathcal{E})$. 
We denote by $End_h(\mathcal{E}) \subseteq End(\mathcal{E})$   the subset of   self-adjoint  endomorphisms of   with respect to $h$.   
 
Let $\psi: \mathbb R\to \mathbb R$ and 
$\Psi: \mathbb R\times \mathbb R\to \mathbb R$ be two smooth functions. 
They induce maps 
\begin{equation}\label{can1}
\psi: End_h(\mathcal{E})\to End_h(\mathcal{E}), \qquad \Psi: End_h(\mathcal{E})\to End\big(End(\mathcal{E})\big)
\end{equation}
as follows.   
Let $s\in End_h(\mathcal{E})$. 
On a small coordinate subset $U\subset X$, there is an $h$-unitary frame $(e_1,\dots, e_r)$ of $\mathcal{E}$ with respect to which $s$ is diagonal, say $s(e_i)= \lambda_i e_i$ for some real functions $\lambda_i$ defined on $U$.  
Then we set \[\psi(s)(e_i):= \exp(\lambda_i)e_i\] 
for each $i= 1,\dots, r$,  and we obtain in this way a  global Hermitian endomorphism $\psi(s)$.

 Given an $End(\mathcal{E})$-valued  $(p, q)$-form   $A$,  
 we can locally write $\displaystyle A= \sum_{i,j} a^j_i e^i\otimes e_j$, where the coefficients $a^i_j$ are  $(p, q)$-forms on $U$, and  $(e^1,\dots, e^r)$ is the basis on $\mathcal{E}^*$ dual to $(e_1,\dots, e_r)$. 
Then we define
\begin{equation}
\Psi(s)(A)|_U
:=  \sum_{i,j} \Psi(\lambda_i, \lambda_j)a^j_i e^i\otimes e_j,  
\end{equation} 
and particularly $\Psi(s)$  defines globally an endomorphism of $End(\mathcal{E})$, which is self-adjoint with respect to the Hermitian form $\langle \cdot , \cdot  \rangle$.    
By definition, we have the following property.  
\begin{equation}\label{can3}
\mbox{If }  \Psi\geq 0  \mbox{ then }  \langle \Psi (s) A, A\rangle \geq 0  \mbox{ for any }  A\in End(\mathcal{E}).    
\end{equation}

In the following statements, 
for any linear subspace $S$ of $End(\mathcal{E})$,  
we denote by $L^p(S)\subseteq S$ the subspace of elements  which is $L^p$.  
The subspace $L_1^p(S)\subseteq S$ is defined as the subspace of elements $s$ 
such that both $s$ and $\bar\partial s$ are $L^p$.  
For any positive real number $b$, we define $L^p_b(S)$ (respectively $L^p_{1,b})$ the set of elements $s$ in $L^p(S)$ (respectively in $L_1^p(S))$ such that 
$|s|\le b$.



\begin{lemma}\label{lemma:simpsonmor} \cite[Proposition 4.1]{Simpson1988} 
Let $\psi: \mathbb R\to \mathbb R$ and  $\Psi: \mathbb R\times \mathbb R\to \mathbb R $ be two smooth functions. 
Let $b>0$ be a real number. 
Then the following properties  hold. 
\begin{enumerate}

  \item For any $p\ge 1$, there is some $b'>0$ such that the following map is continuous  
    $$\psi: L^p_{b}(End_h(\mathcal{E}))\to L^p_{b'}(End_h(\mathcal{E})). $$

  \item For any $1\leq q\leq p$, we have a  nonlinear map
    $$\Psi: L^p_b(End_h(\mathcal{E}))\to \Hom\big(L^p(End (\mathcal{E})), L^q(End (\mathcal{E}))\big), $$
which is moreover continuous in case when $q< p$.

\item For any $1\leq q\leq p$, there is some $b'>0$ so that we have the following map  
$$\psi: L^p_{1,b}(End_h(\mathcal{E}))\to L^q_{1,b'}(End_h(\mathcal{E})),$$
    which is continuous if  $q<p$. 
    The formula $\bar\partial \psi(s) =   \psi'(s)(\bar\partial s)$ holds in this context.
\end{enumerate} 
\end{lemma}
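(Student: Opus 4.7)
The plan is to follow Simpson's original argument (Proposition 4.1 in \cite{Simpson1988}), proceeding first pointwise and then globalizing via H\"older estimates and interpolation against the uniform $L^\infty$ bound. All three claims rest on one pointwise observation: if $s \in End_h(\mathcal{E})$ is self-adjoint with $|s|_x \le b$ at a point $x$, then the eigenvalues $\lambda_1(x), \dots, \lambda_r(x)$ of $s$ lie in $[-b, b]$. Since $\psi$ and $\Psi$ are smooth on the compact sets $[-b,b]$ and $[-b,b]^2$, they are bounded and Lipschitz there with constants depending only on $b$. This gives $|\psi(s)|_x \le b' := \sup_{|t|\le b}|\psi(t)|$ and $|\Psi(s)(A)|_x \le C(b)\,|A|_x$ with $C(b) := \sup_{[-b,b]^2}|\Psi|$, which yields both the uniform $L^\infty$ bound in (1) and the boundedness in (2) for every $q \le p$ by integrating against the base measure.

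For the continuity in (1), I would use the Lipschitz estimate $|\psi(s) - \psi(\tilde s)| \le L(b)\,|s - \tilde s|$ whenever $|s|, |\tilde s| \le b$, obtained from spectral calculus applied to the Lipschitz extension of $\psi|_{[-b,b]}$; this immediately produces $\|\psi(s) - \psi(\tilde s)\|_{L^p} \le L(b)\,\|s - \tilde s\|_{L^p}$. For (2), the analogous bound reads
\[
|\Psi(s)(A) - \Psi(\tilde s)(A)| \le C'(b)\,|s - \tilde s|\cdot|A|
\]
pointwise. Given $s_k \to s$ in $L^p_b$, the uniform pointwise bound $|s_k - s| \le 2b$ combined with $L^p$ convergence forces $L^r$ convergence for every $r > p$ by interpolation. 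Choosing $r = pq/(p-q)$, which requires $q < p$, and applying H\"older with exponents $r$ and $p$ yields
\[
\|\Psi(s_k)(A) - \Psi(s)(A)\|_{L^q} \le C'(b)\,\|s_k - s\|_{L^r}\,\|A\|_{L^p} \to 0.
\]
This is precisely why continuity is asserted only when $q < p$; the endpoint $q = p$ is not accessible by this argument.

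For (3), the identity $\bar\partial \psi(s) = \psi'(s)(\bar\partial s)$ is classical for smooth self-adjoint $s$, because diagonalization of $s$ in a local $h$-unitary frame reduces the diagonal part of the calculation to the scalar identity $\bar\partial \psi(\lambda) = \psi'(\lambda)\bar\partial \lambda$, while the off-diagonal part is governed by the divided difference $\Psi_0(x,y) := (\psi(x)-\psi(y))/(x-y)$, which extends smoothly and symmetrically across the diagonal since $\psi \in C^\infty(\mathbb R)$. One then approximates an arbitrary $s \in L^p_{1,b}$ by orbifold-smooth self-adjoint sections $s_\varepsilon$ with $|s_\varepsilon| \le b$, obtained by local mollification followed by self-adjoint projection and by a Lipschitz truncation of eigenvalues into $[-b,b]$. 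On each $s_\varepsilon$ the formula holds pointwise; passing to the limit, part (1) gives $\psi(s_\varepsilon) \to \psi(s)$ in $L^q$, part (2) with $\Psi_0$ in place of $\Psi$ gives $\Psi_0(s_\varepsilon)(\bar\partial s_\varepsilon) \to \psi'(s)(\bar\partial s)$ in $L^q$, and identifying limits in the distributional sense delivers both the membership $\psi(s) \in L^q_{1,b'}$ and the claimed formula.

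The main obstacle is the approximation step in (3): the approximating sections must simultaneously stay $h$-self-adjoint, respect the sup-norm bound $b$, and converge to $s$ in $L^p_1$. Mollification in a local $h$-unitary frame followed by self-adjoint projection is straightforward, but preserving $|s_\varepsilon| \le b$ requires the Lipschitz eigenvalue truncation, and controlling $\bar\partial s_\varepsilon$ in $L^p$ after this truncation is the delicate point. It is precisely here that the drop from exponent $p$ to $q < p$ in (2) and (3) is exploited: one only needs convergence in the strictly larger space $L^q$, which the interpolation inequality supplies unconditionally from the uniform pointwise bound.
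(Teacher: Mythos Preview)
The paper gives no proof of this lemma; it is stated as a direct citation of \cite[Proposition~4.1]{Simpson1988}. Your proposal reproduces Simpson's original argument faithfully --- pointwise spectral calculus on the compact eigenvalue range $[-b,b]$, Lipschitz control of $\psi$ and $\Psi$ there, H\"older/interpolation for the continuity in (2), and smooth approximation plus the divided-difference identity for (3) --- and is essentially correct.
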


Now we consider the function $$\Phi (x, y) =\frac{e^{x-y}-1}{x-y}, $$ 
as in \cite{CGNPPW}.  
In the next lemma, we collect some elementary results of $\Phi$ without proof.  

\begin{lemma}\label{lemma:Phi-monotone} 
The following properties hold. 
\begin{enumerate}
\item If $ \alpha< \beta $ are real numbers,  then 
\begin{equation*}
    \Phi(x,y)\geq \frac{\exp(\alpha-\beta )-1}{ \alpha -\beta} \mbox{ for any }
    \alpha \leq x,y \leq \beta.
\end{equation*}  

\item We fix $(x,y)\in \mathbb{R}^2$ and we let 
\[\sigma(\lambda) = \lambda \Psi(\lambda x, \lambda y)  = \frac{\exp(\lambda (x-y)) -1 }{x-y}.\] 
Then $\sigma$ is an increasing function in $\lambda$. 
When $\lambda$ tends to $+\infty$, 
$\sigma(\lambda)$ converges to $\frac{1}{y-x}$ if  $x<y$, 
and tends to $+\infty$ if $x\geq y$.      
\end{enumerate} 
\end{lemma}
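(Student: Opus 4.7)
The plan is to reduce both assertions to elementary one-variable calculus by using the fact that $\Phi(x,y)$ depends only on the difference $t = x-y$.

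For part (1), I would introduce the auxiliary function $g\colon \mathbb{R} \to \mathbb{R}$ defined by $g(t) = \tfrac{e^t - 1}{t}$ for $t \neq 0$, extended continuously by $g(0) = 1$, so that $\Phi(x,y) = g(x-y)$. The claim reduces to showing that $g$ is monotonically increasing on $\mathbb{R}$. A direct computation gives
\[
g'(t) = \frac{(t-1)e^t + 1}{t^2}, \qquad t \neq 0.
\]
The numerator $h(t) = (t-1)e^t + 1$ satisfies $h(0) = 0$ and $h'(t) = t e^t$, which is positive for $t > 0$ and negative for $t < 0$. Hence $h$ attains its global minimum $0$ at $t = 0$ and is strictly positive elsewhere, so $g'(t) > 0$ for $t \neq 0$, and $g$ is strictly increasing. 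Once monotonicity of $g$ is established, the bound follows immediately: for $\alpha \le x,y \le \beta$, we have $x - y \ge \alpha - \beta$, so $\Phi(x,y) = g(x-y) \ge g(\alpha - \beta)$, which is the stated inequality.

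For part (2), I would set $u = x - y$ and rewrite $\sigma(\lambda) = \tfrac{e^{\lambda u} - 1}{u}$ (interpreted as $\sigma(\lambda) = \lambda$ when $u = 0$). Differentiating in $\lambda$ gives the clean identity $\sigma'(\lambda) = e^{\lambda u} > 0$, which immediately yields the monotonicity statement. For the asymptotic behavior as $\lambda \to +\infty$, I would split into cases according to the sign of $u$: if $u > 0$ (that is, $x > y$), then $e^{\lambda u} \to +\infty$ so $\sigma(\lambda) \to +\infty$; if $u = 0$, then $\sigma(\lambda) = \lambda \to +\infty$; and if $u < 0$ (that is, $x < y$), then $e^{\lambda u} \to 0$, so $\sigma(\lambda) \to -1/u = 1/(y-x)$.

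There is no serious obstacle in this lemma; the only point that needs minor care is the continuous extension at the diagonal $x = y$, but this is handled transparently via the one-variable function $g$. The whole argument is a couple of applications of elementary calculus.
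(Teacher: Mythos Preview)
Your proof is correct. The paper itself explicitly omits the proof of this lemma, stating only that it ``collect[s] some elementary results of $\Phi$ without proof,'' so there is nothing to compare against; your direct one-variable calculus reduction via $g(t)=(e^t-1)/t$ and the derivative computations for $\sigma$ are complete and entirely appropriate for this elementary statement.
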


\begin{lemma}\label{lemma:A}  
Let $S\in End_h(\mathcal{E})$ be a definite positive smooth global section of $\mathcal{E}nd(\mathcal{E})$. 
Let $s= \log S$, which is also a smooth section in $End_h(\mathcal{E})$.   
We still denote by $\partial$ the $(1,0)$-part of the Chern connection on $(\mathcal{E},h)$. 
Then 
\[\langle(\partial S)S^{-1}, \partial s\rangle_\omega  
=
\langle \Phi(s)(\partial s), \partial s\rangle_\omega. \]
Here the Hermitian form $\langle\cdot , \cdot \rangle_\omega$ on the space of differential forms with values in $\mathcal{E}nd(\mathcal{E})$ is   induced by the K\"ahler form $\omega$ on $X$.   
\end{lemma}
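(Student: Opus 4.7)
The plan is to establish the stronger pointwise identity
\[ (\partial S)\, S^{-1} \;=\; \Phi(s)(\partial s) \]
as $\mathcal{E}nd(\mathcal{E})$-valued $(1,0)$-forms on $X$; the asserted inner-product identity then follows trivially by pairing both sides with $\partial s$ under $\langle\cdot,\cdot\rangle_\omega$. So the real content is this pointwise identity, which I would verify one point at a time in a carefully chosen frame.

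Fix an arbitrary point $x_0 \in X$. Since $s \in End_h(\mathcal{E})$ is self-adjoint, I would choose a holomorphic frame $(e_1,\dots,e_r)$ of $\mathcal{E}$ near $x_0$ which is $h$-unitary at $x_0$ and diagonalizes $s(x_0)$ with eigenvalues $\lambda_1,\dots,\lambda_r$; then $S(x_0)$ and $S^{-1}(x_0)$ are diagonal with entries $e^{\lambda_i}$ and $e^{-\lambda_i}$ respectively. The key calculation is the Duhamel-type formula
\[ \partial S \;=\; \int_0^1 e^{u s}(\partial s)\, e^{(1-u) s}\, du. \]
I would derive this from the fact that the $(1,0)$-part $\partial$ of the induced Chern connection on $\mathcal{E}nd(\mathcal{E})$ is a derivation, $\partial(AB) = (\partial A) B + A\,\partial B$. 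Setting $F(t) = e^{t s}$, which satisfies $F'(t) = s F(t) = F(t) s$, and letting $G(t) = \partial F(t)$, the Leibniz rule gives the ODE $G'(t) = (\partial s) F(t) + s G(t)$ with $G(0) = \partial(\mathrm{Id}) = 0$; integrating $\frac{d}{dt}(e^{-t s} G(t)) = e^{-t s}(\partial s) e^{t s}$ and evaluating at $t=1$ yields the formula. (Alternatively, one expands $S = \sum_k s^k/k!$ and applies the Leibniz rule to each $\partial(s^k)$.)

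Evaluating the right-hand side at $x_0$ in the diagonal frame — where $e^{u s}$ and $e^{(1-u) s}$ become diagonal with entries $e^{u\lambda_i}$ and $e^{(1-u)\lambda_j}$ — a direct computation gives the $(i,j)$-entry
\[ (\partial S)_{ij}(x_0) \;=\; (\partial s)_{ij}(x_0)\int_0^1 e^{u\lambda_i + (1-u)\lambda_j}\,du \;=\; (\partial s)_{ij}(x_0)\cdot \frac{e^{\lambda_i} - e^{\lambda_j}}{\lambda_i - \lambda_j}, \]
interpreted as $(\partial s)_{ii}(x_0)\,e^{\lambda_i}$ on the diagonal. Multiplying on the right by $S^{-1}(x_0)$ divides the $(i,j)$-entry by $e^{\lambda_j}$, yielding
\[ \bigl((\partial S)\, S^{-1}\bigr)_{ij}(x_0) \;=\; \frac{e^{\lambda_i - \lambda_j} - 1}{\lambda_i - \lambda_j}\,(\partial s)_{ij}(x_0) \;=\; \Phi(\lambda_i, \lambda_j)\,(\partial s)_{ij}(x_0), \]
which is precisely the definition of $\Phi(s)(\partial s)$ at $x_0$ in the chosen frame. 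Since $x_0$ was arbitrary, the pointwise identity holds throughout $X$, and the lemma follows.

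The only genuinely substantive step is the Duhamel formula; everything else is bookkeeping in the diagonalizing frame. I expect the main obstacle to be a clean justification of that formula — specifically, verifying that $\partial$ extends to a derivation on $\mathcal{E}nd(\mathcal{E})$ and that $\partial$ commutes with $\tfrac{d}{dt}$ and with integration in $u$, so that the linear ODE for $G(t)$ is legitimate. Once that is in place, the rest is a pleasant short computation.
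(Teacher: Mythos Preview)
Your approach is genuinely different from the paper's. The paper never attempts a pointwise operator identity: it works in a smooth $h$-unitary frame $(e_1,\dots,e_r)$ that diagonalizes $s$ on a whole neighborhood (not merely at a point), writes $\partial S$ and $\partial s$ explicitly in terms of the eigenvalue derivatives $\partial\lambda_i$ and the connection one-forms $A_i^j$ of that moving frame, and then expands the two scalar inner products term by term and observes they agree. Your Duhamel route is cleaner and would yield a stronger statement --- an equality of $\mathcal{E}nd(\mathcal{E})$-valued forms, not just of their pairings with $\partial s$.

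There is, however, a genuine gap at the sentence ``which is precisely the definition of $\Phi(s)(\partial s)$.'' Your Duhamel computation correctly gives, in ordinary row--column matrix notation,
\[
\bigl[(\partial S)S^{-1}\bigr]_{ij}=\Phi(\lambda_i,\lambda_j)\,[\partial s]_{ij}.
\]
But with the paper's convention $(e^i\otimes e_j)(e_k)=\delta_{ik}e_j$, the coefficient $a_i^j$ of $e^i\otimes e_j$ is the $(j,i)$-matrix entry of $A$, so the paper's operator $\Phi(s)$ multiplies the $(i,j)$-matrix entry by $\Phi(\lambda_j,\lambda_i)$, \emph{not} $\Phi(\lambda_i,\lambda_j)$. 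Since $\Phi(x,y)=\tfrac{e^{x-y}-1}{x-y}$ is not symmetric in its arguments, the endomorphism you have produced is not the paper's $\Phi(s)(\partial s)$. (Concretely: on the trivial rank-two bundle with $s(0)=\mathrm{diag}(1,-1)$ and $[\partial s]_{21}(0)=dz$, your formula gives the $(2,1)$-entry $\tfrac{1-e^{-2}}{2}\,dz$, whereas the paper's $\Phi(s)(\partial s)$ has $(2,1)$-entry $\tfrac{e^{2}-1}{2}\,dz$.) What your argument actually establishes, in the paper's conventions, is $S^{-1}(\partial S)=\Phi(s)(\partial s)$; the expression $(\partial S)S^{-1}$ corresponds instead to $\Phi(y,x)$. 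So before pairing with $\partial s$ you must either redo the bookkeeping so that the order of multiplication matches the paper's definition of $\Phi(s)$, or abandon the pointwise operator identity and compare the two inner products directly as the paper does.
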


\begin{proof}
They following calculation can be found in \cite[Lemma 2.1]{UhlenbeckYau1986}, see also       \cite[Section 4.2]{CGNPPW}.  
Locally, let $(e_1,...,e_r)$ be a smooth $h$-unitary basis of $\cE$, which diagonalizes $S$, and hence $s$. 
Let $(e^1,...,e^r)$ be the dual basis of $\cE^*$. 
Then we can write $S= \sum \exp({\lambda_i})   {e}^i \otimes e_i$ and $s= \sum \lambda_i  e^i \otimes e_i$, for smooth real-valued functions $\lambda_i$.  
We write $\partial e_i  = A_{i}^j e_j$, where $A_i^j$ are smooth $(1,0)$-forms.  
It follows that $\partial e^i  = - A^{i}_j e^j$. 
Then we have 
\[   
\partial S = \sum \exp({\lambda_i}) \partial \lambda_i  e^i \otimes e_i 
+ \sum (\exp({\lambda_i}) - \exp({\lambda_j})) A_{i}^j e^i \otimes e_j,
\]
and 
\[   
\partial s = \sum  \partial \lambda_i  e^i \otimes e_i + 
\sum ( {\lambda_i} - {\lambda_j}) A_i^j e^i \otimes e_j.
\]
Hence 
\[
\langle(\partial S)S^{-1}, \partial s\rangle_\omega  =  \sum \|\partial \lambda_i\|_\omega^2 
+ 
 \sum (\lambda_i-\lambda_j) ( \exp({\lambda_i}-{\lambda_j}) -1 )\|A_i^j\|^2_\omega,
\]
and 
\[\langle \Phi(s)(\partial s), \partial s\rangle_\omega = \sum \|\partial \lambda_i\|_\omega^2 
+  \sum \frac{\exp(\lambda_i-\lambda_j)-1}{\lambda_i-\lambda_j} \cdot  (\lambda_i-\lambda_j)^2 \|A_i^j\|^2_\omega.
\] 
This completes the proof of the lemma. 
\end{proof}

We observe that the previous constructions are still valid in the setting of  K\"ahler  orbifold.  In the following lemma, we assume that $X$ is the quotient space of a K\"ahler orbifold.

\begin{lemma}\label{lemma:etabound} 
Let $X$ be the quotient space of a compact K\"ahler  orbifold $(\mathfrak{X},\omega_\orb)$, 
and let $(\mathcal{E},h)$ an orbifold Hermitian vector bundle on $\mathfrak{X}$.  
We identify $\omega_\orb$ with a K\"ahler current $\omega$ on $X$ with continuous local potentials. 
Assume that $H$ is a $h$-self-adjoint endomorphism of $\mathcal{E}$ such that $h_{HE}:=h\cdot H$ is a Hermitian-Einstein metric with respect to $\omega_\orb$. 
Let $\eta= \log H$. 
Assume that $\int_{(X,\omega)} \Tr(\eta) =0$. 
Then the following    equality holds, 
\begin{equation}\label{alt5-general}
0 =  
\int_{(X,\omega)}\left\langle \Phi(\eta)(\partial\eta), \partial\eta\right\rangle_\omega 
+ \int_{(X,\omega)} \Tr \big(\eta \circ  \Lambda\Theta_h\big),  
\end{equation} 
where $\Theta_h$ is the Chern curvature tensor of $h$. 
\end{lemma}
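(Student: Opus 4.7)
The plan is to deploy the Hermitian--Einstein equation for $h_{HE}=h\cdot H$ together with the standard curvature identity expressing $\Theta_{h_{HE}}-\Theta_h$ as a $\bar\partial$-exact $\mathcal{E}nd(\mathcal{E})$-valued $(1,1)$-form, and then to perform the integration by parts that converts the resulting cross-term into $\int\langle\Phi(\eta)(\partial\eta),\partial\eta\rangle_\omega$ via Lemma \ref{lemma:A}. Since $\mathcal{E}$, $h$, $H$, $\eta$ and $\omega_\orb$ are all orbifold smooth, every step lifts to the smooth K\"ahler charts of $\mathfrak{X}$, where Stokes' theorem and the K\"ahler identities are available; the identification of $\omega_\orb$ with the K\"ahler current $\omega$ merely rewrites orbifold integrals as integrals against $\omega^n$ on $X$, since the two forms agree on the dense smooth orbifold locus, which has full measure.

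First, the Hermitian--Einstein condition reads $\sqrt{-1}\Lambda\Theta_{h_{HE}}=c\cdot\id_\mathcal{E}$ for a constant $c$ fixed by the $[\omega_\orb]$-slope of $\mathcal{E}$. Taking trace against $\eta$ and invoking the normalization $\int_{(X,\omega)}\Tr(\eta)=0$ gives
\[
\int_{(X,\omega)}\Tr\!\big(\eta\circ\sqrt{-1}\Lambda\Theta_{h_{HE}}\big) \;=\; c\cdot\!\int_{(X,\omega)}\Tr(\eta) \;=\; 0.
\]
Next we substitute the standard identity
\[
\Theta_{h_{HE}} \;=\; \Theta_h + \bar\partial\!\big((\partial^h H)H^{-1}\big),
\]
where $\partial^h$ denotes the $(1,0)$-part of the Chern connection on $\mathcal{E}nd(\mathcal{E})$ induced by $h$. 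This splits the vanishing integral into the desired curvature term $\int_{(X,\omega)}\Tr(\eta\circ\sqrt{-1}\Lambda\Theta_h)$ plus a cross-term, which we must identify with $\int_{(X,\omega)}\langle\Phi(\eta)(\partial\eta),\partial\eta\rangle_\omega$.

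The cross-term is handled by Stokes' theorem. Rewriting the integrand as a top form via the K\"ahler identity $\sqrt{-1}\Lambda\beta\cdot\omega^n=n\sqrt{-1}\beta\wedge\omega^{n-1}$ for $(1,1)$-forms $\beta$, then applying the Leibniz rule to pull the $\bar\partial$ off $(\partial^h H)H^{-1}$ onto $\eta$, and discarding the resulting exact $(n,n-1)$-form on the closed orbifold $\mathfrak{X}$, we obtain
\[
\int_{(X,\omega)}\Tr\!\Big(\eta\circ\sqrt{-1}\Lambda\bar\partial\big((\partial^h H)H^{-1}\big)\Big) \;=\; \int_{(X,\omega)}\big\langle(\partial^h H)H^{-1},\,\partial^h\eta\big\rangle_\omega,
\]
where the $L^2$-pairing on the right-hand side is obtained using cyclicity of trace and the identity $(\partial^h\eta)^*=\bar\partial\eta$, valid because $\eta$ is $h$-self-adjoint. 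Applying Lemma \ref{lemma:A} with $S=H$ and $s=\eta$ then converts the right-hand side into $\int_{(X,\omega)}\langle\Phi(\eta)(\partial\eta),\partial\eta\rangle_\omega$ and produces \eqref{alt5-general}.

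The one point requiring some care is the validity of Stokes' theorem in the present orbifold-with-K\"ahler-current setting, since $\omega$ is only continuous on $X$ along the branched locus. However, because all integrands are orbifold smooth and $\mathfrak{X}$ is a closed orbifold, a partition of unity subordinate to the orbifold cover $\{U_i\}$ reduces every integral on $X$ to a weighted sum of its smooth chartwise counterparts on $V_i$, divided by $|G_i|$, where the classical K\"ahler calculus applies. This is exactly the framework in which the smooth analogues in \cite[Section 4.2]{CGNPPW} and \cite{Simpson1988} apply verbatim, so no new estimate is required; the role of the lemma is simply to record the identity in orbifold language for use in the subsequent convergence argument.
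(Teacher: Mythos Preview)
Your proposal is correct and follows essentially the same route as the paper: write the Hermitian--Einstein equation as $\gamma\cdot\ID-\Lambda\Theta_h=\Lambda\bar\partial\big((\partial H)H^{-1}\big)$, multiply by $\eta$, take trace and integrate using $\int\Tr(\eta)=0$, integrate by parts (justified because all data are orbifold smooth), and apply Lemma~\ref{lemma:A}. The paper additionally introduces a decomposition $\eta=-\frac{\rho}{r}\cdot\ID+s$ at the outset, but does not actually use it in the argument, so your streamlined version is equivalent.
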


\begin{proof}
We can decompose $\eta$ as follows, 
\begin{equation}\label{zz-general}
\eta= -\frac{\rho}{r}\cdot  \ID+ s,
\end{equation} 
where $r$ is the rank of $\mathcal{E}$,  $\Tr s=0$, and $\rho = \Tr \eta$ is an orbifold smooth function on $X$ such that  $\int_{(X,\omega)} \rho =0$. 
The Hermitian-Einstein equation for the metric $h_{HE}$ 
writes
\begin{equation}\label{eqn:HE}
\gamma \cdot \ID- \Lambda\Theta_h= \Lambda \bar\partial\left((\partial H)H^{-1}\right),   
\end{equation}  
where $\gamma$ is an appropriate constant. 
We multiply both side by $\eta$ at the right, and deduce that 
\[
\gamma \cdot \eta =  \Lambda\Theta_h \circ \eta +  \Lambda \bar\partial \left((\partial H)H^{-1}\right) \circ \eta.  
\]
Now we take the trace and integrate on $(X,\omega)$. Since the trace of $\eta$ has mean value $0$, we deduce that 
\[
0 =  \int_{(X,\omega)} \Tr \big(\eta  \Lambda\Theta_h\big)  
+ \int_{(X,\omega)}\Tr \left( \Lambda \bar\partial\big((\partial H)H^{-1}\big)\circ \eta\right)
.\]
For the second summand above,  by integration by part and by noting that $\eta$ is  self-adjoint, we  have  
\[
\int_{(X,\omega)}\Tr \left( \Lambda \bar\partial\big((\partial H)H^{-1}\big)\circ \eta\right)= 
\int_{(X,\omega)} \langle(\partial H)H^{-1}, \partial\eta\rangle_\omega.
\] 
Here, the integration by part  holds, as  all data involved   are orbifold smooth.
It follows that 
\[
0 =  
\int_{(X,\omega)} \langle(\partial H)H^{-1}, \partial\eta\rangle_\omega + \int_{(X,\omega)} \Tr \big(\eta \Lambda\Theta_h\big)
.\]
The lemma  then follows from   Lemma \ref{lemma:A}.
\end{proof}

\section{Uniform estimates on K\"ahler currents}
\label{section:uniform-kahler}



In this section, we will prove some uniform geometric estimates on a degenerate family of orbifold smooth K\"ahler currents. 

\subsection{Uniform estimates on $\mathcal{W}$ classes and $\mathcal{AK}$ classes. }
We start by recalling the recent breakthrough   by \cite{GPSS} and \cite{GPSS24},  
on uniform geometric estimates of a very robust family of K\"ahler metrics.   
Firstly, we record a series of definitions from these two papers.

\begin{defn}
\label{def:Wclass}
Let $(Y,\theta_Y)$ be a compact K\"ahler manifold of dimension $n$. 
Let $p\geq 1$, $A,K>0$ be real numbers and let $\gamma$ be a non-negative continuous function  on $Y$.   
We say that a K\"ahler form $\omega$ belongs to the class $\mathcal{W}(Y, \theta_Y, n, p, A, K, \gamma)$ if the following properties hold. 
\begin{enumerate}
    \item $ [\omega] \cdot  [\theta_Y]^{n-1} \le A$. 
    \item The $p$-th Nash-Yau entropy is bounded by $K$, i.e.
$${\mathcal{N}}_p (\omega) = \frac{1}{V_\omega} \int_Y \left|\log \frac{1}{V_\omega} \frac{\omega^n}{\theta_Y^n} \right|^p \omega^n  \leq K, $$ 
where $V_\omega=  [\omega]^n$ is the volume of $(X,\omega)$.  
\item  $\frac{\omega^n}{\theta_Y^n} \geq \gamma$. 
\end{enumerate}

\end{defn}

\begin{defn} \label{def:akclass} Let $X$ be a  compact normal K\"ahler variety of dimension $n$, let $\pi: Y \rightarrow X$ be a log resolution of singularities and let $\theta_Y$ be a smooth K\"ahler form  on   $Y$.   
We fix  constants $A,K>0$, an integer $p>n$,  and a non-negative function  $\gamma \in \mathcal{C}^0(Y)$ such that $\{y\in Y \ | \ \gamma(y)=0\}$ is contained in a proper analytic subvariety of $Y$. 
Then   the set of admissible  K\"ahler currents 
$${\mathcal{AK}}(X, \theta_Y, n, p, A, K, \gamma) $$  
is defined to be the set of   K\"ahler currents $\omega$ on $X$ satisfying the following conditions.

\begin{enumerate}

\item $[\omega]$ is a K\"ahler class on $X$ and $\omega$ has bounded local potentials.

\item $[\pi^*\omega] \cdot [\theta_Y]^{n-1}\leq A$ and $[\omega]^n \geq A^{-1}$.

\item The $p$-th Nash-Yau entropy is bounded by $K$, i.e.
$${\mathcal{N}}_p (\omega) = \frac{1}{V_\omega} \int_Y \left|\log \frac{1}{V_\omega} \frac{(\pi^*\omega)^n}{\theta_Y^n} \right|^p (\pi^*\omega)^n  \leq K, $$
where $V_\omega=  [\omega]^n$.

\item 
 $\frac{(\pi^*\omega)^n}{ \theta_Y^n}\geq \gamma.$
 
\item The log volume measure ratio 
$$\log \left( \frac{(\pi^*\omega)^n}{\theta_Y^n} \right)$$ has log type analytic singularities.
\end{enumerate} 
\end{defn}

The log type analytic singularities in the item $(5)$ above is defined as follows.

\begin{defn}\label{def:logtype}
A  function $F$ on a  complex $Y$ manifold of dimension $n$ is said to have log type analytic singularities if the following properties hold. 
There exist 
smooth prime divisors $D_1,...,D_N$ on $Y$ with simple normal
crossings. 
For $j = 1, ...., N$,   
let $\sigma_j \in H^0(Y,\mathcal{O}_Y(D_j))$ be a defining section of $D_j$, 
and $h_j$ be a smooth hermitian metric on  $\mathcal{O}_Y(D_j)$. 
Locally around every point of $Y$, the function $F$ can be written in the shape
$$F =\sum_{k=1}^K a_k(-\log)^k\Big(\prod
^N
_{j=1}
e^
{f_{k,j}}|\sigma_j|^{
2b_{k,j}}_
{h_j}\Big),$$
where $K\geq 1$ is an integer, 
$(-\log)^k$
is the $k$- the composition of $(-\log)$, 
$a_k, b_{k,j} \in \mathbb{R}$  and $f_{k,j} \in C
^{\infty}(Y)$. 
\end{defn}

With the notation of Definition of \ref{def:akclass},  for any $\omega \in \mathcal{AK}(X, \theta_Y, n, p, A, K, \gamma)$, we  follow \cite{GPSS} to set $$\cS_{X, \omega}:=X_{\mathrm{sing}}\cup \,\,\pi \left( \textnormal{Singular set of}\,\,\, \left( \log \frac{(\pi^*\omega)^n}{\theta_Y^n} \right) \right).$$   From the definition of log type singularities, we see that $\cS_{X, \omega}$ is an analytic subvariety of $X$. 

\begin{defn}\label{def:metric-completion} 
With the notation of Definition \ref{def:akclass},   assume that 
$$\omega \in \mathcal{AK}(X, \theta_Y, n, p, A, K, \gamma). $$ 
We define
$$(\hat X, d) = \overline{(X\setminus \cS_{X, \omega}, \omega|_{X\setminus \cS_{X, \omega}})}$$
to be the metric completion of $\left(X\setminus \cS_{X,\omega}, \omega|_{X\setminus \cS_{X,\omega}} \right)$. We also denote the unique metric measure space  associated to $(X, \omega)$ by 
$$(\hat X, d, \omega^n).$$

\end{defn}

We remark that $\omega^n$ extends uniquely to a volume measure on $\hat X$ because neither $\omega$ nor $\omega^n$  carries mass on $\cS_{X, \omega}$. 
We can consider the Sobolev space $L_1^{2}(\hat X,d, \omega^n)= W^{1,2}(\hat X,d, \omega^n)$ as in \cite[Definition 8.1]{GPSS}. 


%
%

 Guo-Phong-Sturm-Song prove a  package of uniform geometric estimates.  
\begin{thm}\cite[Theorem 3.1]{GPSS}\label{thm:soborbi} Let $\omega\in\mathcal{AK}(X, \theta_Y, n, p, A, K, \gamma)$, then the following properties hold:
\begin{enumerate}

\item There exists a constant $C=C(X, \theta_Y, n, p, A, K, \gamma)>0$ such that  
$${\textnormal{diam}}(\hat X, d) \leq C.$$
In particular, $(\hat X, d)$ is a compact metric space.
 
\item  There exist a constant $q>1$  and a constant  $C_S=C_S(X, \theta_Y, n, p,  A, K, \gamma, q)>0$ such that the following Sobolev inequality  
$$
\Big(\int_{\hat X} | u  |^{2q}\omega^n   \Big)^{1/q}\le C_S \left( \int_{\hat X} |\nabla u|^2 ~\omega^n + \int_{\hat X} u^2 \omega^n \right)  
$$
holds for all $u\in W^{1, 2}(\hat X, d, \omega^n)$.

\item There exists a constant $C_H=C_H(X, \theta_Y, n, p, A, K, \gamma,q)>0$ such that the following trace formula holds for the heat kernel $H$ of $(\hat X, d, \omega^n)$, 
$$H(x,x, t) \leq \frac{1}{V_\omega} + \frac{C_H}{V_\omega} t^{-\frac{q}{q-1}}. $$

\item Let $0=\lambda_0 < \lambda_1 \leq \lambda_2 \leq ... $ be the increasing sequence of eigenvalues of the Laplacian $-\Delta_\omega$ on $(\hat X, d, \omega^n)$. Then there exists $c=c(X, \theta_Y, n, p, A, K, \gamma, q)>0$ such that
$$\lambda_k \geq c k^{\frac{q-1}{q}}. $$
\end{enumerate}
\end{thm}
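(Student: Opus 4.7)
The plan is to establish the four assertions sequentially, in the order that is natural for the standard heat-kernel/Sobolev machinery: first the Sobolev inequality, then the heat kernel bound, then the eigenvalue bound by tracing, and finally the diameter bound. The central input is that membership in $\mathcal{AK}(X,\theta_Y,n,p,A,K,\gamma)$ packages exactly the data that controls the auxiliary Monge--Amp\`ere technique of Guo--Phong: a uniform $L^p$ Nash--Yau entropy bound together with a pointwise lower bound on the volume ratio $(\pi^*\omega)^n/\theta_Y^n\geq \gamma$ and a uniform upper bound on $[\pi^*\omega]\cdot[\theta_Y]^{n-1}$.

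Step 1 (uniform Sobolev inequality). I would pull everything back to the log resolution $Y$ and work with $\pi^*\omega$, which is a semi-positive closed $(1,1)$-form with bounded potentials. The main tool is the Guo--Phong--Tong auxiliary Monge--Amp\`ere equation: one solves $(\theta_Y+\ddbar\psi)^n = c_\omega(\pi^*\omega)^n$ and uses the De Giorgi / Kolodziej iteration scheme driven by the Nash--Yau entropy to extract a uniform $L^\infty$ bound on $\psi$ depending only on $(X,\theta_Y,n,p,A,K,\gamma)$. Combining this bound with the explicit test functions ($\phi = |u|^{2q}$ cut off to avoid $\cS_{X,\omega}$) and the Moser iteration argument on the smooth locus of $\omega$, one converts the $\theta_Y$-Sobolev inequality on $Y$ into a Sobolev inequality for $\omega$. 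Since the singular set $\cS_{X,\omega}$ has measure zero with respect to $\omega^n$, extending test functions from $X\setminus\cS_{X,\omega}$ to all of $\hat X$ and the fact that $W^{1,2}(\hat X,d,\omega^n)$ coincides with the completion of compactly supported smooth functions on $X\setminus\cS_{X,\omega}$ allows the inequality to be stated in the intrinsic form of item $(2)$.

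Step 2 (heat kernel and spectrum). From the Sobolev inequality of Step 1, the on-diagonal heat kernel bound of item $(3)$ follows from the standard Nash--Moser / Davies ultracontractivity argument, which is purely functional analytic and only requires the Sobolev inequality and the volume normalization $V_\omega\geq A^{-1}$ from Definition \ref{def:akclass}(2). The constant $C_H$ then depends only on $C_S$, $q$, and $V_\omega$. The eigenvalue lower bound in item $(4)$ is derived by tracing the heat kernel: integrating $H(x,x,t)$ gives $\sum_j e^{-\lambda_j t}\leq 1+C_H t^{-q/(q-1)}$, and comparing the $k$-th term to this sum at the optimal $t\sim \lambda_k^{-1}$ yields $\lambda_k\geq ck^{(q-1)/q}$.

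Step 3 (diameter bound). For item $(1)$, I would use the Sobolev inequality together with the lower bound $V_\omega\geq A^{-1}$ to obtain a uniform non-collapsing estimate for $\omega$-balls of small radius (through the standard Croke/Maheux--Saloff-Coste trick: a Sobolev inequality plus a volume normalization controls the isoperimetric profile and hence bounds the diameter). Alternatively, one can run the Guo--Phong--Song iteration for the distance function against a reference point, following their recent diameter bound proof.

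The main obstacle is Step 1: ensuring that the Sobolev constant is genuinely uniform across the degenerate family $\mathcal{AK}(X,\theta_Y,n,p,A,K,\gamma)$. The difficulty is twofold. First, $\omega$ is only a K\"ahler current with bounded potentials, so classical Sobolev embeddings on the smooth manifold $Y$ do not transfer directly; one must quantify the comparison between $\pi^*\omega$ and $\theta_Y$ purely in terms of the entropy and lower-ratio data. Second, the log-type singularities of $\log((\pi^*\omega)^n/\theta_Y^n)$ allowed by Definition \ref{def:logtype} can be severe, and the cutoff procedure near $\cS_{X,\omega}$ must be executed so that the error terms go to zero uniformly, which is where the integer $p>n$ and the precise form of the Nash--Yau entropy enter crucially.
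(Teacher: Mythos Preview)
This theorem is not proved in the paper: it is quoted verbatim as \cite[Theorem 3.1]{GPSS} and used as a black box, so there is no ``paper's own proof'' to compare your proposal against. Your outline is a reasonable high-level sketch of the strategy in the Guo--Phong--Sturm--Song papers (auxiliary Monge--Amp\`ere equation and entropy control $\Rightarrow$ uniform Sobolev; then standard ultracontractivity $\Rightarrow$ heat kernel and eigenvalue bounds; diameter via non-collapsing or the direct GPSS argument), but grading it against the present paper is moot since the authors simply cite the result.
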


We recall the definition of  heat kernels.  
\begin{defn}\label{def:Heatkernel} 
Assume that $  \omega\in\mathcal{AK}(X, \theta_Y, n, p, A, K, \gamma)$.  
The heat kernel of the Laplacian $\Delta_{\omega}$ is by the following parabolic equations 
$$\partial_{t} H(x, y, t) = \Delta_{\omega, y} H(x, y, t),  \ \lim_{t\rightarrow 0^+} H(x, y, t) = \delta_x(y)$$
for $x, y \in Y^\circ = \pi^{-1}(X\setminus \cS_{X, \omega})$. 
\end{defn}

We will need the following uniform mean value inequality, which is essentially proved in \cite[Lemma 2]{GPS24} and \cite[Lemma 5.1]{GPSS24}. 

\begin{lemma}\label{lemma:meanvalue} 
Let $\omega\in \mathcal{W}(Y, \theta_Y, n, p, A, K, \gamma)$. 
We assume in addition there is some constant $B>1$ such that $V_{\omega} = [\omega]^n$ is contained in $[B^{-1},B]$.  
Let $a$  and $I$ be positive real numbers. 
Let  $v\in L^1(Y, \omega)$ be a function such that $ |\int_{(Y,\omega)}v| \le I$. 
Assume that $v$ is  $\mathcal{C}^2$-differentiable on the set 
$\{v>  -I\cdot B  -1\}$ and  that  
\[\Delta_{\omega}(v)\geq -a\]
on $\{v>  -I\cdot B  \}$. 
Then we have 
\[ v\leq C\big(1+ \Vert v\Vert_{L^1(Y,\omega)}\big)
\] 
where $C=C(Y,\theta_Y, n, p , A,K,\gamma,a,B,I)$ is a positive real number. 
\end{lemma}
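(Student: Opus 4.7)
The plan is to run a De Giorgi--Nash--Moser iteration for the (quasi-)subsolution $v$, combining the uniform Sobolev inequality for the class $\mathcal{W}$ (established in \cite{GPS24}, cf.\ item (2) of Theorem \ref{thm:soborbi}) with a truncation at the level $k=-IB$ that exploits the integral hypothesis $|\int_{(Y,\omega)} v|\le I$. Concretely, set $u:=(v+IB)_+$. From $|\int v|\le I$ and $V_\omega\in[B^{-1},B]$ one immediately gets
\[
\|u\|_{L^{1}(Y,\omega)} \;\le\; \|v\|_{L^{1}(Y,\omega)} + IB\cdot V_\omega \;\le\; \|v\|_{L^{1}(Y,\omega)} + IB^{2},
\]
and pointwise $v\le u$, so the lemma reduces to the $L^{\infty}$--$L^{1}$ bound $\sup_Y u \le C(1+\|u\|_{L^{1}(Y,\omega)})$ for a constant $C$ with the allowed dependencies.

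To produce the key energy inequality, I would fix any level $k>-IB$ and set $u_k:=(v-k)_+$. Since $v$ is $\cC^2$ on the open neighborhood $\{v>-IB-1\}$ of $\{v\ge k\}$ and $\Delta_{\omega} v\ge -a$ on $\{v>-IB\}\supset\{v\ge k\}$, the test function $\phi:=u_k^{\alpha-1}$ (for $\alpha\ge 2$) is $\cC^{1}$ on $Y$ and supported inside the $\cC^2$-locus of $v$. Approximating $\phi$ by $\phi\chi_\varepsilon$ for a smooth cutoff in the values of $v$, classical integration by parts on the compact manifold $Y$ (no boundary) followed by the obvious passage to the limit gives
\[
\int_Y |\nabla u_k^{\alpha/2}|^2\,\omega^n \;\le\; \frac{a\alpha^{2}}{4(\alpha-1)}\int_Y u_k^{\alpha-1}\,\omega^n \;\le\; C\alpha\bigl(1+\|u_k\|_{L^{\alpha}(Y,\omega)}^{\alpha}\bigr),
\]
using $u_k^{\alpha-1}\le 1+u_k^{\alpha}$ and $V_\omega\le B$.

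Plugging this into the uniform Sobolev inequality applied to $u_k^{\alpha/2}$ (with Sobolev exponent $q>1$ depending only on the $\mathcal{W}$-data) yields the one-step estimate
\[
\|u_k\|_{L^{\alpha q}(Y,\omega)}^{\alpha} \;\le\; C\alpha\bigl(1+\|u_k\|_{L^{\alpha}(Y,\omega)}^{\alpha}\bigr).
\]
A standard Moser telescoping along $\alpha_j=2q^{j}$, valid because $\sum_j \alpha_j^{-1}<\infty$ and $\sum_j \log(\alpha_j)/\alpha_j<\infty$, produces $\|u_k\|_{L^{\infty}(Y)}\le M(1+\|u_k\|_{L^{2}(Y,\omega)})$. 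The H\"older interpolation $\|u_k\|_{L^{2}}^{2}\le \|u_k\|_{L^{\infty}}\|u_k\|_{L^{1}}$ combined with Young's inequality then absorbs the $L^{\infty}$ factor on the right-hand side, giving $\|u_k\|_{L^{\infty}(Y)}\le C'(1+\|u_k\|_{L^{1}(Y,\omega)})$. Letting $k\searrow -IB$ and combining with the first paragraph yields the lemma.

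The main technical obstacle is exactly the gap between the $\cC^{2}$-locus $\{v>-IB-1\}$ and the subsolution locus $\{v>-IB\}$: the buffer of $1$ is what allows one to choose levels $k>-IB$ strictly, so that $v$ is $\cC^2$ on an \emph{open neighborhood} of the support of the test function and the integration by parts is classical; the borderline level $k=-IB$ is reached only in the monotone limit at the end. Everything else is a quantitative Moser scheme whose only non-trivial input is the uniform Sobolev constant from \cite{GPS24}, and tracking the dependencies shows that the final constant depends precisely on $(Y,\theta_Y,n,p,A,K,\gamma,a,B,I)$ as required.
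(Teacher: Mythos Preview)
Your Moser iteration approach is correct and yields the lemma, but it is substantially different from the paper's argument. The paper gives a two-line reduction: set $M := V_\omega^{-1}\int_Y v\,\omega^n$, so that $|M|\le IB$; then $u := v - M$ has zero integral, is $\mathcal{C}^2$ on $\{u>-1\}\subseteq\{v>-IB-1\}$, and satisfies $\Delta_\omega u\ge -a$ on $\{u>0\}\subseteq\{v>-IB\}$, whence \cite[Lemma 5.1]{GPSS24} applies directly to give $u\le C'(1+\|u\|_{L^1})$ with $C'$ depending only on the $\mathcal{W}$-data and $a$. Undoing the shift and absorbing $|M|V_\omega\le IB^2$ gives the stated bound. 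Your argument is instead a from-scratch derivation of that black-box mean value inequality via the truncated iteration, using the uniform Sobolev constant as the only analytic input; this is more self-contained and, incidentally, never actually uses the hypothesis $|\int v|\le I$ (only the location of the levels $-IB-1$ and $-IB$ matters).

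One technical caveat worth flagging: to justify the integration by parts and start the iteration you need a priori integrability of $u_k^{\alpha}$ and $|\nabla u_k|^2$, which is not immediate since you only assume $v\in L^1$. The standard cure is an additional upper truncation $u_{k,N}:=\min(u_k,N)$, deriving the energy inequality for $u_{k,N}$ and letting $N\to\infty$ by monotone convergence before iterating. Also, for $\alpha=2$ the test function $u_k$ is only Lipschitz, not $\mathcal{C}^1$, at $\{v=k\}$. Both points are routine, but your write-up glosses over them.
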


\begin{proof}
Let $M = \frac{1}{V_\omega} \int_Y v\cdot \omega^n$ and let $u=v-M$. 
Then $\int_Y u\cdot \omega^n =0$ and $|M|\leq IB$.  
Thus $u$ is $\mathcal{C}^2$-differentiable on the set 
$\{u>    -1\}$ and  that  
$\Delta_{\omega}(u)\geq - a$ 
on $\{u>  0  \}$.   
By \cite[Lemma 5.1]{GPSS24}, there is a constant $C'=C'(Y,\theta_Y, n,p,A,K,\gamma,a)$ such that 
$u \leq C'(1+ \| u \|_{L^1(Y,\omega)})$. 
We note that $   \| u \|_{L^1(Y,\omega)} \leq  \| v \|_{L^1(Y,\omega)} + |M| \cdot V_\omega$. 
Hence we have  
\[
v \le IB + C'(1 +  \| v \|_{L^1(Y,\omega))} + IB^2) 
\]
This completes the proof of the lemma. 
\end{proof}

\subsection{Uniform estimates for degenerating families of orbifold K\"ahler forms}

In the remainder of this section, we consider the following situation. 
Let $(Z,\omega_Z)$ be a compact K\"ahler variety of dimension $n$. 
Assume that $\rho\colon X\to Z$ and $\pi\colon Y\to X$ are projective bimeromorphic morphisms, such that $Y$ is smooth and $X$ is the quotient space of some K\"ahler  orbifold $\mathfrak{X}$.  
Let $\theta_Y$ be a K\"ahler form on $Y$.  
We assume that the $\rho\circ \pi$-exceptional locus is a snc divisor, 
and that there is a divisor $D\geq 0$ with the same support, such that $-D$ is relatively ample over $Z$.    
In particular, we fix a smooth Hermitian metric $h_D$ on $\mathcal{O}_Y(D)$ so that $(\rho\circ\pi)^*\omega_Z - \delta \cdot \Theta_{h_D}$ is a K\"ahler form for all $\delta>0$ small enough, where $\Theta$ stands for the Chern curvature.  
We assume further that $\pi(D)$ contains the branched locus of $\mathfrak{X}$, 
denote by $s_D\in H^0(Y,\mathcal{O}_Y(D))$ a global section defining $D$. 
We also suppose that $\pi(D)$ contains the branched locus of $\mathfrak{X}$.     
Let $\omega_\orb$ be an orbifold K\"ahler form on $\mathfrak{X}$. 
By abuse of notation, we also denote by $\omega_\orb$ the induced K\"ahler current on $X$. 
For any $\epsilon>0$, we set $\omega_\epsilon = \rho^*\omega_Z + \epsilon\cdot \omega_\orb$. 
They are considered as K\"ahler currents on $X$, which are orbifold smooth on $\mathfrak{X}$.  
Our objective is to show   the following theorem. 

\begin{thm}
\label{thm-orbifold-AK-property} 
There exists constants $C,C_S,C_H,c>0$, all independent of $\epsilon$, 
such that for all $\epsilon>0$ small enough, 
the consequences of Theorem \ref{thm:soborbi} hold for $\omega_\epsilon$. 
\end{thm}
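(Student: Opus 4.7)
My plan is to show that for $\epsilon\in(0,\epsilon_0]$ with $\epsilon_0>0$ small enough, all $\omega_\epsilon$ lie in a single admissible K\"ahler class $\mathcal{AK}(X,\theta_Y,n,p,A,K,\gamma)$ for fixed $A$, $K$, $p$ and $\gamma$, and then invoke Theorem~\ref{thm:soborbi} directly. The easier items of Definition~\ref{def:akclass} come at essentially no cost: $[\omega_\epsilon]=[\rho^*\omega_Z]+\epsilon[\omega_\orb]$ is K\"ahler with continuous local potentials, the intersection $[\pi^*\omega_\epsilon]\cdot[\theta_Y]^{n-1}$ is affine in $\epsilon$ and hence uniformly bounded, and the volume $V_{\omega_\epsilon}=[\omega_\epsilon]^n$ converges to $[\omega_Z]^n>0$, giving a two-sided uniform bound. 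For item~(4), the pointwise inequality $\pi^*\omega_\epsilon\geq(\rho\circ\pi)^*\omega_Z$ of semipositive $(1,1)$-forms on $Y$ yields
\[
\frac{(\pi^*\omega_\epsilon)^n}{\theta_Y^n}\;\geq\;\gamma\;:=\;\frac{((\rho\circ\pi)^*\omega_Z)^n}{\theta_Y^n},
\]
a smooth non-negative function vanishing exactly along the $(\rho\circ\pi)$-exceptional locus, so this single $\gamma$ serves the whole family.

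The substance lies in items~(3) and~(5). I would expand
\[
(\pi^*\omega_\epsilon)^n \;=\; \sum_{k=0}^n \binom{n}{k}\,\epsilon^k\,((\rho\circ\pi)^*\omega_Z)^{n-k}\wedge(\pi^*\omega_\orb)^k
\]
and analyze $\pi^*\omega_\orb$ near $D$ via the toric model for quasi-\'etale covers supplied by Lemma~\ref{lemma:smooth-finite-cover}: along a component of $D$ lying over the branched locus of $\mathfrak{X}$, $\pi^*\omega_\orb$ acquires a singularity of type $|s_D|^{-2c}$ times smooth positive data, with $c$ determined only by the ramification indices of $\mathfrak{X}$ and independent of $\epsilon$. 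Combined with the vanishing of $(\rho\circ\pi)^*\omega_Z$ along the $\rho$-exceptional components of $D$, this produces a local expression for $(\pi^*\omega_\epsilon)^n/\theta_Y^n$ of the log-type form required by Definition~\ref{def:logtype}, with integer $K$ and rational exponents $b_{k,j}$ fixed in $\epsilon$; item~(5) follows. For item~(3), upper-bounding $(\pi^*\omega_\epsilon)^n\leq(\pi^*\omega_1)^n$ pointwise and using that $|\log|s_{D_j}||^p$ is integrable against a smooth volume form on $Y$, the Nash--Yau entropy $\mathcal{N}_p(\omega_\epsilon)$ is uniformly bounded once $p>n$ is fixed and the uniform lower bound on $V_{\omega_\epsilon}$ from the first paragraph is in force.

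The principal obstacle I anticipate is genuine uniformity in $\epsilon$: each $\omega_\epsilon$ is orbifold smooth and individually belongs to some admissible class by Faulk-type considerations, so the real task is to extract constants that do not blow up as $\omega_\epsilon$ degenerates to the merely semipositive $\rho^*\omega_Z$. The decisive structural feature is that the exponents and geometric data appearing in the log-type expansion above are intrinsic to the triple $(\mathfrak{X},\rho,\pi)$ and not to $\epsilon$, so the $\epsilon$-dependence enters only through the tame factors $\epsilon^k$ and the binomial coefficients, all of which remain bounded as $\epsilon\to 0$. The relative ampleness of $-D$, which provides the smooth K\"ahler form $(\rho\circ\pi)^*\omega_Z-\delta\Theta_{h_D}$, is what allows $\theta_Y$ to be chosen so that $\theta_Y^n$ itself does not develop problematic vanishing along $D$; careful bookkeeping of these ingredients is the technical heart of the proof.
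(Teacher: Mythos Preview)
Your verification of items (1)--(4) is essentially correct and matches the paper's Proposition~\ref{prop:VerifyAKclass}. The gap is item~(5). Your claim that $\pi^*\omega_\orb$ near $D$ is ``$|s_D|^{-2c}$ times smooth positive data'' is false: the factor multiplying the monomial $|s_D|^{-2c}$ is only \emph{continuous} on $Y$, not smooth. Concretely, if $p\colon Y'\to Y$ is the toric cover of Lemma~\ref{lemma:smooth-finite-cover} and $q\colon Y'\to V$ the induced map to the orbifold chart, then the smooth density $\rho$ on $V$ pulls back to a smooth $G$-invariant function on $Y'$; but the invariants of a cyclic action $z\mapsto\zeta z$ include $|z|^2$, which descends to $|w|^{2/a}$ on $Y$ --- H\"older but not smooth. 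Thus $\log((\pi^*\omega_\epsilon)^n/\theta_Y^n)$ cannot be written in the form required by Definition~\ref{def:logtype}, where the $f_{k,j}$ must be $C^\infty$. The paper states this explicitly: ``the log volume measure ratio $\ldots$ does not have log type analytic singularities.'' Consequently $\omega_\epsilon\notin\mathcal{AK}$ and Theorem~\ref{thm:soborbi} cannot be invoked directly.

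The paper's remedy (Lemma~\ref{l:logtype} through Lemma~\ref{omapp}) is to decompose $\log((\pi^*\omega_\epsilon)^n/\theta_Y^n)=F_{\log}+G_\epsilon$, where $F_{\log}$ carries all the log-type singular behavior and $G_\epsilon$ is continuous with polynomially controlled derivative blow-up along $D$. One then mollifies $G_\epsilon$ (Proposition~\ref{prop:appg}), regularizes $F_{\log}$, and solves perturbed Monge--Amp\`ere equations to produce genuinely smooth K\"ahler forms $\omega_j$ on $Y$ lying in a single $\mathcal{W}$-class with constants independent of both $j$ and $\epsilon$ (Lemma~\ref{lemma:unisob}). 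The uniform estimates of Theorem~\ref{thm:soborbi} are then obtained for $\omega_\epsilon$ by passing to the limit $\omega_j\to\pi^*\omega_\epsilon$ via the argument of \cite[Section~8]{GPSS}, not by membership in $\mathcal{AK}$.
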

We follow the method of \cite[Section 7]{GPSS}, and will approximate  $\omega_\epsilon$ by certain family  $\{\omega_{j}\}_{j\gg 0}$ of  smooth K\"ahler forms on $Y$.  
The key is to show that, for all $j$ sufficiently large, 
$\omega_j$ belong to the same class $  \mathcal{W}(Y, \theta_Y, n, p, A, K, \gamma)$, where $A,K,p,\gamma$ are independent of $\epsilon$ and $j$.    
For more details, see Lemma \ref{lemma:unisob}.

In our situation, it is routine to verify that the currents $\omega_\epsilon$ satisfy the items $(1)$-$(4)$ of Definition \ref{def:logtype},  uniformly for all  $\epsilon>0$ small enough.  
However, for the item $(5)$, the log volume ratio $\log \left(\frac{\pi^*\omega^n_{\epsilon}}{\theta_Y}\right)$  does not have log type analytic singularities.
Fortunately, in the following lemma, we observe that the log volume ratio is the sum of two functions, one has log type analytic singularities, and the other one is a continuous function $G_\epsilon$.  
For this function $G_\epsilon$, we can estimate the blow-up rates of its derivatives with respect to the distance to the divisor of $D$. 
With a smoothing argument by using convolutions, we can still find the desired approximations.


\begin{lemma} \label{l:logtype} 
There   are effective divisor $E_1,E_2$ without common components, 
and a constant  $a>0$ such that the following properties hold. 
The  supports of $E_1$ and $E_2$  are contained in the one of $D$.  
For $i=1,2$, let $s_{E_i}\in H^0(Y,\mathcal{O}_Y(E_i))$ be a global  section  defining $E_i$, 
let $h_{E_i}$ be a smooth Hermitian metric on $E_i$. 
We set  
\[
F_{\log} = a (\log |s_{E_1}|_{h_{E_1}} -  \log |s_{E_2}|_{h_{E_2}}). 
\]
For any   K\"ahler current $\omega$ on $X$ which is an orbifold K\"ahler form on $\mathfrak{X}$,   
we can write  
\[\log\frac{\pi^*\omega^n}{\theta_Y^n}=F_{\log} +  G\] 
where $G$ is  a continuous  function on $Y$, smooth away from $D$.

In addition, 
for any integer $k>0$, there  are positive integers $C_k, N_k$ such that
$\|\nabla^k G\|_{\theta_Y}\leq C_k |s_D|_{h_D}^{-N_k}$, where $\nabla$ means the covariant derivatives with respect to $\theta_Y$.
\end{lemma}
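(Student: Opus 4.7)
The statement is local on $Y$, and away from $D$ both $\pi^{*}\omega^{n}$ and $\theta_{Y}^{n}$ are smooth positive top forms, so the log of their ratio is smooth and can be absorbed into $G$. We therefore focus on a neighborhood of a point $y_{0}\in D$. The strategy is: use an orbifold chart near $\pi(y_{0})$ to write $\omega^{n}$ in terms of its smooth lift, pull the resulting expression back to $Y$, and extract a singular part with logarithmic poles along $D$ together with a continuous remainder.

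Fix $y_{0}\in D$ with $x_{0}=\pi(y_{0})\in X$, and choose an orbifold chart $q\colon V\to U\ni x_{0}$ with $V$ smooth and $U=V/G$. After shrinking and, by Lemma~\ref{lemma:smooth-finite-cover} applied to the finite étale cover $V\to U$ away from the branch locus, passing to a cyclic refinement, we may assume $G$ acts diagonally on coordinates $w=(w_{1},\ldots,w_{n})$ on $V$ and that $q$ is a monomial map. The orbifold form $\omega$ lifts to a smooth Kähler form $\widetilde{\omega}$ on $V$; applying the change-of-variables formula $q^{*}(g\, d\mu_{z})=g\cdot|J_{q}|^{2}\, d\mu_{w}$ to $\widetilde{\omega}^{n}=f\, d\mu_{w}$ yields
\[
\omega^{n}\;=\;\prod_{i}|s_{B_{i}}|_{h_{B_{i}}}^{-2(1-1/m_{i})}\cdot\varphi_{\omega}\cdot d\mu_{z}
\]
on $U\setminus\Sing(X)$, where $B_{i}$ are the components of the orbifold branch divisor, $m_{i}$ the ramification indices, and $\varphi_{\omega}>0$ is a continuous function on $U$ depending on $\omega$. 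Pulling back to $\pi^{-1}(U)\subset Y$, each factor $|s_{B_{i}}|^{-2(1-1/m_{i})}$ becomes $\prod_{j}|y_{j}|^{-2p_{ij}(1-1/m_{i})}$ up to smooth non-vanishing factors, with $p_{ij}=\mathrm{ord}_{D_{j}}(\pi^{*}B_{i})$, while $\pi^{*}d\mu_{z}=|J_{\pi}|^{2}\, d\mu_{y}$ and $|J_{\pi}|^{2}=\prod_{j}|y_{j}|^{2\alpha_{j}}\cdot|\widetilde{J}|^{2}$ with $\alpha_{j}$ the discrepancy of $D_{j}$ and $\widetilde{J}$ smooth non-vanishing. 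Combining,
\[
\frac{\pi^{*}\omega^{n}}{\theta_{Y}^{n}}\;=\;\prod_{j}|y_{j}|^{2c_{j}}\cdot f_{\omega}(y),\qquad c_{j}=\alpha_{j}-\sum_{i}p_{ij}\bigl(1-1/m_{i}\bigr)\in\mathbb{Q},
\]
where the exponents $c_{j}$ depend only on the orbifold structure and the resolution data (not on $\omega$), and $f_{\omega}>0$ is continuous on the chart and smooth on $Y\setminus D$.

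To globalize, each $c_{j}$ is attached intrinsically to a component $D_{j}$ of $D$. Choose $a>0$ so that $2ac_{j}\in\mathbb{Z}$ for every $j$, and write $2ac_{j}=n_{j}^{+}-n_{j}^{-}$ with $n_{j}^{\pm}\geq0$; set $E_{1}=\sum_{j}n_{j}^{+}D_{j}$ and $E_{2}=\sum_{j}n_{j}^{-}D_{j}$, which are effective, supported in $|D|$, and share no common component. Pick global sections $s_{E_{i}}$ and smooth Hermitian metrics $h_{E_{i}}$. In every chart, $a\bigl(\log|s_{E_{1}}|_{h_{E_{1}}}-\log|s_{E_{2}}|_{h_{E_{2}}}\bigr)$ equals $\sum_{j}2c_{j}\log|y_{j}|$ modulo a smooth function, so $G:=\log(\pi^{*}\omega^{n}/\theta_{Y}^{n})-F_{\log}$ is continuous on $Y$ and smooth on $Y\setminus D$. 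The polynomial derivative bound $\|\nabla^{k}G\|_{\theta_{Y}}\leq C_{k}|s_{D}|_{h_{D}}^{-N_{k}}$ follows because the non-smooth part of $f_{\omega}$ arises from composing smooth functions on $V$ with fractional powers $(\pi^{*}z_{i})^{1/m_{i}}$ coming from the orbifold uniformization; their $k$-th covariant derivatives with respect to $\theta_{Y}$ blow up at worst like $|y|^{-k}$ times bounded factors, hence like a negative power of $|s_{D}|_{h_{D}}$.

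The main obstacle lies in the second paragraph: one must carefully track how the orbifold branching, the birational morphism $\pi$, and the possible singularities of $X$ at $x_{0}$ interact in local coordinates, and verify that the resulting exponents $c_{j}$ are rational numbers depending only on the orbifold and resolution data, so that $E_{1}$, $E_{2}$, and $a$ can be chosen uniformly in $\omega$. Once this local structure is secured, the globalization step and the polynomial derivative bounds on $G$ are essentially formal.
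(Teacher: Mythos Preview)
Your overall strategy—extracting a universal logarithmic divisor from the volume ratio and leaving a continuous remainder—is the same as the paper's, and your globalization step and the heuristic for the derivative bounds are fine. The genuine gap is in your local reduction. You apply Lemma~\ref{lemma:smooth-finite-cover} to the orbifold chart $q\colon V\to U$ and conclude that, after a cyclic refinement, $q$ may be taken to be a monomial map $(w_1,\dots,w_n)\mapsto(w_1^{m_1},\dots,w_n^{m_n})$ onto coordinates $z$ on $U$. But Lemma~\ref{lemma:smooth-finite-cover} requires the \emph{base} to be a ball in $\mathbb{C}^n$; here the base is $U=V/G$, which is typically a singular quotient (e.g.\ an isolated $A_1$ point when $G=\{\pm 1\}$ acts by $-\mathrm{Id}$ on $\mathbb{C}^2$). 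No cover of $V$ changes $U$, so in general there are no coordinates $z$ on $U$ and no monomial presentation of $q$. Your formula $\omega^n=\prod_i|s_{B_i}|^{-2(1-1/m_i)}\varphi_\omega\,d\mu_z$ and the subsequent pull-back computation then have no meaning at points where $X$ carries genuine (codimension $\ge 2$) orbifold structure.

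The paper avoids this by switching the roles: it applies Lemma~\ref{lemma:smooth-finite-cover} to the finite cover $W\to Y$, where $W$ is the normalization of $V\times_X Y$ and $Y$ (locally a coordinate ball with $D$ a union of hyperplanes) is the smooth base. This produces a polycyclic cover $p\colon Y'\to Y$ factoring through $W$, hence a map $q\colon Y'\to V$ with $\pi\circ p=f\circ q$. On the \emph{smooth} space $Y'$ one has $(q^*\omega_V)^n$ smooth and $(p^*\theta_Y)^n$ explicitly monomial, so $p^*\bigl(\pi^*\omega^n/\theta_Y^n\bigr)=\psi_1\cdot\psi_2$ with $\psi_1$ smooth positive and $\psi_2=\prod|z_i|^{2c_i}$; Galois invariance then descends $\psi_1$ to a continuous $\eta_1$ on $Y$ and $\psi_2$ to $\prod|t_i|^{d_i}$. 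The crucial point is that $\psi_2$ depends only on the Jacobians of $p$ and $q$, not on $\omega$, so the rational divisor $\Delta$ glues globally and one takes $m\Delta=E_1-E_2$, $a=m^{-1}$. The derivative bound on $G$ then follows immediately because $p^*G$ is genuinely smooth on $Y'$ and $p$ is an explicit monomial map, so $\|\nabla^k G\|_{\theta_Y}$ is controlled by inverse powers of the Jacobian of $p$, i.e.\ of $|s_D|_{h_D}$. Replacing your second paragraph by this $Y'$-construction closes the gap.
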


\begin{proof}  
We investigates the  first part of the lemma   locally on $X$.  
Assume that  $f: V\to X$ is an orbifold chart.  
We denote by $\omega_V=f^*\omega $ the smooth K\"ahler form on $V$. 
Let 
$W$ be the normalization of $V\times_X Y$.   

Then we study locally on $Y$. 
By abuse of notation,   we will assume that $Y\subseteq \mathbb{C}^n$ is an open ball centered at the origin.  
Since $D$ is snc, 
we can  assume that  it is a union of coordinates hyperplanes. 
Since the branched locus of $W\to Y$ is contained in the divisor $D$, 
by applying  Lemma \ref{lemma:smooth-finite-cover} to the finite cover $W\to Y$,  we obtain a finite morphism  $p\colon Y'\to Y$.  
We may also assume that $\theta_Y$ is equal to the Euclidean K\"ahler form. 
Let $\theta_{Y'}$ be the Euclidean K\"ahler form on $Y'$.  
Then, by the construction of Lemma \ref{lemma:smooth-finite-cover},  
we have  
\[(p^*\theta_Y)^n = (\prod_{i=1}^n  a_i^2 |z_i|^{2a_i-2}) \cdot \theta_{Y'}^n , \]  
where$(z_1,...,z_n)$ are coordinates on $Y'$  and $a_1,...,a_n$ are positive integers.

Now we compute the ratio $\frac{\pi^*\omega^n}{\theta_Y^n}$ by pulling it back to $Y'$. 
Up to shrinking $V$, we may assume that $\omega_V^n= \rho\cdot \Theta \wedge \overline{\Theta}$, where $\Theta$ is a nowhere vanishing holomorphic $n$-form independent of $\omega_V$,  and $\rho$ is a smooth nowhere vanishing function on $V$.  
If $q\colon Y'\to V$ is the natural morphism, then the locus where $q$ is not smooth is contained in $p^{-1}(D)$, which is a union of coordinate hyperplanes.  
It follows that $q^*\Theta$ is a holomorphic $n$-form on $Y'$,  whose vanishing locus is contained in $p^{-1}(D)$. 
Hence we can write  
\[
q^*(\Theta\wedge \overline{\Theta}) = A\cdot \varphi_2 \cdot \theta_{Y'}^n
\]
where $A$ is a positive smooth function, and  $\varphi_2$ is of the shape 
\[
\varphi_2 = \prod_{i=1}^n   |z_i|^{2b_i-2} 
\]
for some  positive integers $b_1,...,b_n$.  
Hence we can write 
\[
(q^*\omega_V)^n =   \varphi_1 \cdot \varphi_2 \cdot   \theta_{Y'}^n, 
\] 
where $\varphi_1$ is a smooth positive function.  
Therefore, we can write 
\[
p^*(\frac{\pi^*\omega^n}{\theta_Y^n}) = \frac{(q^*\omega_V)^n}{\theta_{Y'}^n} \cdot \frac{\theta_{Y'}^n}{(p^*\theta_Y)^n} = \psi_1\cdot \psi_2, 
\]
where $\psi_1=\varphi_1$ is a smooth positive function,
and 
\[\psi_2 := \varphi_2\cdot \prod_{i=1}^n |z_i|^{2-2a_i} = \prod_{i=1}^n   |z_i|^{2c_i},  \]  
for some  integers $c_1,..,c_n$.  
We remark that the product $\psi_1\cdot \psi_2$ is invariant under the Galois group of $Y'\to Y$, and so is $\psi_2$. 
Thus, so is $\psi_1$. 
Hence there is a continuous positive function $\eta_1$ on $Y$ whose pullback on $Y'$ is equal to $\psi_1$.   
Similarly, $\psi_2$ descend to a function  $\eta_2$ on $Y$, which has the shape $\eta_2=\prod_{i=1}^n |t_i|^{d_i}$, for some rational numbers $d_1,...,d_n$, 
where $(t_1,...,t_n)$ are coordinates on $Y$.  

It follows that the singularities of the log volume ratio $\log\frac{\pi^*\omega^n}{\theta_Y^n}$ is identical  to those of  $\log \eta_2 = \log (\prod_{i=1}^n |t_i|^{d_i})$.  
Furthermore, from the construction, $\eta_2$ may depend on $Y'$ and $V$, but is independent of $\omega$. 
Hence the $\mathbb{Q}$-divisors locally defined by $\prod_{i=1}^n |t_i|^{d_i}=0$ glue globally into a $\mathbb{Q}$-divisor  $\Delta$,  which depends only on $X$ and $Y$.  
There is a positive integer $m$, such that $m\Delta$ is integral. 
We define $E_1$ and $E_2$ so that  $m\Delta=E_1-E_2$, and define $a= m^{-1}$. 
Then the function 
\[
G  = \log\frac{\pi^*\omega^n}{\theta_Y^n} - F_{\log}
\]
is  continuous on $Y$. 
In addition,  $p^*G$ is smooth on $Y'$. 
This proves the first statement of the lemma. 

For the second part of the lemma, since $Y$ is compact, we only need to prove the estimate locally on $Y$. 
Hence we can still use the previous notation and consider $Y$ as an open ball in $\mathbb{C}^n$.   
We fix some integer $k\geq 0$. 
By pulling back to $Y'$, we see that 
\[
p^*(\nabla^k G)  = \nabla^k(G_1+\log \psi_1). 
\]
for some smooth function $G_1$ on $Y$. 
In particular, $\|p^*(\nabla^k G) \|_{\theta_{Y'}}$ is bounded.  
Without loss of the generality, we can  assume the support of $D$ is defined by  $t_1\cdots t_j =0$ for some integer $  j\le n$. 
Then  $\|\nabla^k G \|_{\theta_{Y}}$ is bounded by $C'_k\cdot |t_1\cdots t_j|^{-m_k}$ for some positive integers $C'_k,m_k$.  
By our assumption on $s_D$, we see that $|s_D|_{h_D}$ can be written as $C''\cdot |t_1|^\alpha_1 \cdots |t_j|^{\alpha_j}$ for some positive integers $\alpha_1,...,\alpha_j$ and some positive smooth function $C''$.  
Hence  $\|\nabla^k G\|_{\theta_Y}\leq C_k |s_D|_{h_D}^{-N_k}$ for some positive integers $C_k,N_k$. 
This completes the proof of the lemma.  
\end{proof}

In the previous lemma, both $E_1$ and $E_2$ are allow to be the zero divisor. 
We will later use convolutions to  approximate the function $G$ above by smooth functions.  
The following proposition provides some estimates on the convolutions.




\begin{prop}\label{prop:appg} Let $G$ be a continuous function on  $Y$ 
which is smooth away from $D$. 
Assume that for any integer $0\leq k\leq 3$,  
there are integers $C'_k >0$ and $a_k\geq 0$, 
such that all covariant derivatives  of $G$ up to order $k$ with respect to $\theta_Y$, 
are bounded by $C'_k\cdot |s_D|_{h_D}^{-a_k}$.   
Then there exists  a family of smooth approximating functions $\{G_\sigma\}_{1 \gg \sigma>0}$ of $ G$ satisfying the following properties:
\begin{enumerate}
\item $ G_\sigma   $ are bounded,  uniformly for all  $\sigma$.  
\item On any compact set $K\subset Y\setminus D$, $G_\sigma$ converge to $G$ uniformly and smoothly as $\sigma\rightarrow 0$.
\item There are  positive integers $C, d$, 
such that $\|\nabla^2 G_\sigma\|_{\theta_Y} \leq C|s_D|_{h_D}^{-d}$ for all $\sigma$ small enough. 
\end{enumerate}
\end{prop}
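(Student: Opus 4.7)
The plan is to construct $G_\sigma$ by local Euclidean convolution in coordinate charts, patched by a smooth partition of unity. Using that $D$ is snc, fix a finite atlas $\{(U_\alpha, z^\alpha)\}$ covering $Y$ on which $D$ is a union of coordinate hyperplanes and $|s_D|_{h_D}$ is comparable, up to a smooth positive factor, to a monomial $\prod_i |z^\alpha_i|^{a_{\alpha,i}}$ with integer exponents $a_{\alpha,i}\geq 1$. Choose a smooth partition of unity $\{\eta_\alpha\}$ with $\mathrm{supp}\,\eta_\alpha \Subset \tilde U_\alpha \Subset U_\alpha$, and cutoffs $\chi_\alpha$ identically $1$ on a neighborhood of $\mathrm{supp}\,\eta_\alpha$ and supported in $U_\alpha$. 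Set
\[
G_\sigma := \sum_\alpha \eta_\alpha \cdot (G\chi_\alpha)_\sigma,
\]
where $(\cdot)_\sigma$ denotes Euclidean convolution in the local coordinates on $U_\alpha$ with a standard smooth mollifier of scale $\sigma$ and mass $1$. Property (1) follows from $|(G\chi_\alpha)_\sigma|\leq \|G\|_\infty$, and property (2) from standard mollifier theory on any compact set in $Y\setminus D$, where $G$ is smooth.

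For property (3), expand $\nabla^2 G_\sigma$ by the Leibniz rule: it is a sum of terms of the form (derivatives of $\eta_\alpha$)$\,\otimes\,\nabla^k (G\chi_\alpha)_\sigma$ with $k=0,1,2$, and the $\eta_\alpha$ contribute only smooth, uniformly bounded factors. It thus suffices to bound $|\nabla^k (G\chi_\alpha)_\sigma(y)|$ at a point $y\in\mathrm{supp}\,\eta_\alpha$ with local coordinates $z^0$ by $C|s_D(y)|_{h_D}^{-d}$ for some fixed $C,d$. I split into two regimes.

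Regime A: every coordinate $|z^0_i|$ indexing a component of $D$ exceeds $2\sigma$, so $B(y,\sigma)\cap D=\emptyset$. Then $G$ is smooth on $B(y,\sigma)$, so I pull the derivatives onto $G$ inside the convolution integral and obtain $|\nabla^k(G\chi_\alpha)_\sigma(y)|\leq \sup_{B(y,\sigma)} |\nabla^k(G\chi_\alpha)| \leq C|s_D(y)|_{h_D}^{-a_k}$; here $|z_i|\geq |z^0_i|/2$ on $B(y,\sigma)$ lets me compare the monomial $|s_D|_{h_D}$ on the ball to its value at $y$. Regime B: some $|z^0_{i_0}|\leq 2\sigma$, so $y$ is within $2\sigma$ of $D$. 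Then I shift the derivatives onto the mollifier to get $|\nabla^k(G\chi_\alpha)_\sigma(y)| \leq C\sigma^{-k}\|G\|_\infty$. On the compact chart, $|s_D(y)|_{h_D}\leq C'(2\sigma)^{a_{\alpha,i_0}} \leq C''\sigma$ since $a_{\alpha,i_0}\geq 1$, so $\sigma^{-k}\leq C'''|s_D(y)|_{h_D}^{-k}$ after rescaling $h_D$ to ensure $|s_D|_{h_D}\leq 1$. Combining both regimes yields (3) with $d:=\max(a_1,a_2,2)$.

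The main obstacle is Regime B: the Euclidean mollifier's second derivative inevitably blows up like $\sigma^{-2}$ near $D$, and this growth must be absorbed by the smallness of $|s_D(y)|_{h_D}$ there. The trade is made possible by the monomial form of $|s_D|_{h_D}$ in snc coordinates, and it forces the admissible exponent $d$ in (3) to grow like $2/a_{\min}$ with the minimal multiplicity of components of $D$.
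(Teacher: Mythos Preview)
Your proof is correct and follows the same two-regime mollification strategy as the paper: smooth $G$ by convolution, push derivatives onto $G$ away from $D$, and onto the kernel near $D$, then use the snc form of $|s_D|_{h_D}$ to convert the resulting $\sigma^{-k}$ growth into $|s_D|^{-d}$. The paper's only cosmetic differences are that it uses a single global mollifier via the exponential map rather than chartwise Euclidean convolution patched by a partition of unity, and in its far-from-$D$ regime it bounds $\partial^2(G_\sigma-G)$ by a mean-value estimate involving third derivatives of $G$ (hence the hypothesis $k\le 3$), whereas your direct bound by $\sup_{B(y,\sigma)}|\nabla^k G|$ needs only $k\le 2$.
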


\begin{proof} 
Let $\theta_{1}\colon \mathbb{R} \to \mathbb{R}_{\geq 0}$ be a   function supported on $[0,1]$, such that $\theta_1(|w|^2)$ is smooth for  $w\in \mathbb{R}^{2n}$  and that $\int_{\mathbb{R}^{2n}}\theta_1(|w|^2) \mathrm{d}w = 1$.   
For $\sigma>0$, we set  $\theta_{\sigma}(u) = \frac{1}{\sigma^{2n}}\theta_1(\frac{u}{\sigma^2})$, 
so that it is supported on $[0,\sigma^2]$ and $\int_{\mathbb{R}^{2n}}\theta_\sigma(|w|^2) \mathrm{d}w = 1$.    
We will use the functions $\theta_{\sigma}$  as convolution kernels to construct approximations of $G$.  
We note that there are  integers $C_0, b_0>0$, such that the derivatives of $\theta_\sigma$ up to order $2$ is bounded by $ C_0 \cdot \sigma^{-b_0}$.

We  denote  by $|x-y|$ the distance between two points $x,y \in Y$. 
Since $Y$ is compact, there is some $0<\sigma_0<1$ small enough, such that for any $y\in Y$, 
the exponential map $\exp_y$ is an isomorphism from the ball in $\mathbb{R}^{2n}$ of radius $4\sigma_0$ centered at the origin.   
From now on, we only consider $\sigma>0$ which are less than $\sigma_0$,   
and define the  smooth functions  $G_\sigma$ by using convolutions as follows, 
\[G_\sigma (y) = \int_{w\in \mathbb{R}^{2n}}  \theta_\sigma (|w|^2) \cdot G(\exp_y(w)) \mathrm{d}w.  \] 
By our choice of $\sigma_0$, we have the following alternative expression of $G_\sigma$, 
 \[G_\sigma (y) =  \int_{x\in (Y,\theta_Y)} \theta_\sigma (|x-y|^2) \cdot G(x) \cdot \lambda(y,x),  \]
where $\lambda(y,x)^{-1}$ is the Jacobian determinant of the exponential map $\exp_y$ at the point $(\exp_y)^{-1}(x)$. 
Up to shrinking $\sigma_0$, we can assume that $\exp_y^{-1}$ and  $\lambda(y,x)$ are  smooth function on $\{(x,y)\in Y \times Y \ | \ |x-y|<4\sigma_0 \}$. 
From the standard properties of convolutions, we can deduce the items $(1)$ and $(2)$.

For the item $(3)$, 
we  set \[T_\sigma = \{x\in Y \ | \  \mathrm{dist}\, (x, D) < \sigma\},\] 
where $\mathrm{dist}\, (x, D)$ is the distance from $x$ to $D$. 
Locally around every point of $D$, there is a coordinate neighborhood with holomorphic  coordinates $(z_1,...,z_n)$, on which $D$ is the union of certain coordinate hyperplanes. 
In particular, $|s_D|_{h_D}$ can be written in the shape 
\[ |s_D|_{h_D} =  A\cdot |z_1|^{\alpha_1}\cdots |z_n|^{\alpha_n}\] 
for some smooth positive function  $A$ and for some $\alpha_1,...,\alpha_n \in \mathbb{Z}_{\geq 0}$. 
Therefore, since $Y$ is compact, there are positive constant integers  $C_1, b_1$, 
such that for all $0<\sigma<\sigma_0$,  we have 
\begin{equation} \label{eqn:appg1}
\sigma \geq \frac{1}{2} \mathrm{dist}(x,D)  \geq  C_1 |s_D(x)|_{h_D} ^{b_1}  \mbox{ for all } x\in T_{2\sigma}.      
\end{equation}
In addition, there is a constant $C_2$, such that for any $y\in  Y\setminus T_{2\sigma}$, and for any $t\in Y$ with $|t-y|\le \sigma$, 
we have  
\begin{equation} \label{eqn:appg2}
    |s_D(t)|_{h_D} \geq C_2 |s_D(y)|_{h_D}. 
\end{equation}
To visualize this constant $C_2$, locally around a point of $D$ for example, 
we may let $C_2 = 2^{-(\alpha_1+\cdots + \alpha_n)}$

We fix an open covering of $Y$ by coordinates open subsets.  
It is enough to prove that, 
there are constant integers $C> 0$ and $d \geq 0$, such that  on each of these open subsets, we have 
\[
|\partial_{z_i} \partial_{\bar{z}_j} G_\sigma| \le C\cdot|s_D|_{h_D}^{-d}, 
\] 
for all $i,j$ and all $\sigma$. 
The idea to divide the manifold $Y$ into two parts (depending on $\sigma)$ and estimate the derivatives of $G_\sigma$  separately. 


Firstly, we assume that  $y \in Y\setminus T_{2\sigma}$.    
Then for any $w\in \mathbb{R}^{2n}$ with $|w|\le \sigma$,  
the partial derivatives with respect to $y$ satisfies 
\[
|\partial_{z_i} \partial_{\bar{z}_j}G(\exp_y(w))-\partial_{z_i} \partial_{\bar{z}_j}G(\exp_y(0))|  
\le \sigma \cdot |\varphi(y,w')| \le |\varphi(y,w')|, 
\]
where $\varphi$ involves the   partial derivatives of $\partial_{z_i} \partial_{\bar{z}_j}G(\exp_y(w))$ with  respect to $w$, and $w'$ is a point lying on the interval $[0,w]$ inside $\mathbb{R}^{2n}$.  
Since $\exp_y(w)$ is a smooth function for $y\in Y$ and $|w|<4\sigma_0$ by our choice of $\sigma_0$, its partial derivatives up to order $3$ are bounded by a constant, whenever $|w|\le \sigma_0$. 
Therefore, by chain rule, if we set $t=\exp_y(w')$, then the term $|\varphi(y,w')|$ can be controlled by the partial derivatives of $G$ up to order $3$ at the point $t$.  
From the estimates on the partial derivatives of $G$, 
we then deduce that 
\[
|\partial_{z_i} \partial_{\bar{z}_j}G(\exp_y(w))-\partial_{z_i} \partial_{\bar{z}_j}G(\exp_y(0))|  \le |\varphi(y,w')| 
\le C_3\cdot C_3'  |s_D(t)|_{h_D}^{-a_3}, 
\] 
for some constant $C_3$. 
Since $y\in  Y\setminus T_{2\sigma}$, we have  $|s_D(t)|_{h_D} \geq C_2 |s_D(y)|_{h_D}$, see \eqref{eqn:appg2}.  
Thus  
\begin{eqnarray*}
|\partial_{z_i} \partial_{\bar{z}_j} (G_\sigma - G) (y) | 
&\le&  \int_{w\in \mathbb{R}^{2n}}  \theta_\sigma (|w|^2) \cdot |\partial_{z_i} \partial_{\bar{z}_j} G(\exp_y(w)) - \partial_{z_i} \partial_{\bar{z}_j}G(\exp_y(0))| \mathrm{d}w  \\
&\le&   C_3 C_3'  |s_D(t)|_{h_D}^{-a_3}  \int_{w\in \mathbb{R}^{2n}}  \theta_\sigma (|w|^2) \cdot  \mathrm{d}w   \\
&\le&   C_3 C_3'  \cdot  (C_2|s_D(y)|_{h_D})^{-a_3}. 
\end{eqnarray*}
Hence $\partial_{z_i} \partial_{\bar{z}_j} G_\sigma (y)$ is bounded by $C'_2 |s_D(y)|_{h_D}^{-a_2} + C_3C'_3 \cdot  (C_2|s_D(y)|_{h_D})^{-a_3}$.  

Assume that $y\in T_{2\sigma}$.  
Since $G$ is continuous, we may assume that $|G| $ is bounded by the constant $ C_0'$. 
Then, by considering the partial derivatives with respect to $y$, we have 
\begin{eqnarray*}
|\partial_{z_i} \partial_{\bar{z}_j} G_\sigma   (y)|  
&=& \left|\int_{x\in (Y,\theta_Y)} (\partial_{z_i} \partial_{\bar{z}_j} (\theta_\sigma (|y-\cdot|^2 )\cdot \lambda(y,\cdot))(x) )\cdot G(x) \right| \\
&\le & \int_{ x\in (Y,\theta_Y)}  C_4 \cdot C_0 \cdot \sigma^{-b_0} \cdot |G(x)|  \\
&\le & \int_{x\in (Y,\theta_Y)}  C_4 \cdot C_0 \cdot \sigma^{-b_0} \cdot C'_0,  
\end{eqnarray*} 
where $C_4$ is a constant independent of $\sigma$ and $y$. 
For the first inequality above, we use the estimates on the derivatives of $\theta_\sigma$ to obtain the term $C_0 \sigma^{-b_0}$. 
We also use the fact that the partial derivatives, with respect to $y$ up to order $2$, of $|y-x|^2$ and of  $\lambda(y,x)$,  are bounded by some constant, over the domain $\{(x,y)\in Y\times Y \ | \ |x-y|<\sigma_0 \}$. 
Since  $y\in T_{2\sigma}$, as shown in \eqref{eqn:appg1},  we have 
\[
\sigma^{-b_0}  \le (C_1 |s_D(y)|_{h_D}^{b_1})^{-b_0}. 
\]
Hence $\partial_{z_i} \partial_{\bar{z}_j} G_\sigma   (y)$ is bounded by 
\[C_4\cdot C_0 \cdot C_1^{-b_0}  \cdot C_0' \cdot {Vol}(Y,\theta_Y) \cdot |s_D(y)|_{h_D}^{-b_1b_0},\] 
where $Vol$ is the volume. 
This completes the proof of the proposition. 
\end{proof}




In the following proposition, we prove that the family of currents $\{\omega_{\epsilon}\}$  satisfies  certain uniform   estimates. 

\begin{prop}\label{prop:VerifyAKclass}  
There exists $A,K,p,\gamma$ such that  $\omega_\epsilon$ satisfies the assumption $(1)$-$(4)$ of Definition \ref{def:akclass} for all $0<\epsilon \leq 1$. 
In other words, the following properties hold. 
\begin{enumerate}

\item  ${\omega_\epsilon}$ has bounded local potentials.

\item $[\pi^*{\omega_\epsilon}] \cdot [\theta_Y]^{n-1}\leq A$ and $[{ \omega_\epsilon}]^n \geq A^{-1}$.

\item The $p$-th Nash-Yau entropy is bounded by $K$, i.e.
$${\mathcal{N}}_p ({\omega_\epsilon}) = \frac{1}{V_{\omega_\epsilon}} \int_Y \left|\log \frac{1}{V_{\omega_\epsilon}} \frac{(\pi^*{\omega_\epsilon})^n}{\theta_Y^n} \right|^p (\pi^*{\omega_\epsilon})^n  \leq K, $$
where $V_{\omega_\epsilon}=  [{\omega_\epsilon}]^n$.

\item 
 $\frac{(\pi^*{\omega_\epsilon})^n}{ \theta_Y^n}\geq \gamma.$

\end{enumerate} 
\end{prop}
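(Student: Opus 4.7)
My plan is to verify the four conditions separately. Conditions (1), (2) and (4) will be direct, while the uniform Nash--Yau entropy bound (3) is the main substantive estimate.

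Condition (1) follows because $\rho^*\omega_Z$ and $\omega_\orb$ both have bounded local potentials on $X$. For (2), the upper bound $[\pi^*\omega_\epsilon]\cdot[\theta_Y]^{n-1}\le A$ is immediate from $\epsilon\le 1$, while expanding
\[
[\omega_\epsilon]^n=\sum_{k=0}^n\binom{n}{k}\epsilon^k[\rho^*\omega_Z]^{n-k}\cdot[\omega_\orb]^k\;\ge\;[\omega_Z]^n>0
\]
provides the lower bound. For (4), I observe that the hypothesis that $(\rho\circ\pi)^*\omega_Z-\delta\Theta_{h_D}$ is K\"ahler for some $\delta>0$ forces $(\rho\circ\pi)^*\omega_Z$ to be a \emph{smooth} semi-positive $(1,1)$-form on $Y$; hence
\[
\gamma:=\frac{\bigl((\rho\circ\pi)^*\omega_Z\bigr)^n}{\theta_Y^n}
\]
is a smooth non-negative function on $Y$ whose zero set lies in the $(\rho\circ\pi)$-exceptional divisor. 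Since $\pi^*\omega_\epsilon\ge\pi^*\rho^*\omega_Z$ as positive currents with bounded potentials, expanding the $n$-fold Monge--Amp\`ere product gives $(\pi^*\omega_\epsilon)^n\ge\gamma\cdot\theta_Y^n$.

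The substantial part is (3). The difficulty is that the remainder $G_\epsilon$ produced by Lemma \ref{l:logtype} applied to $\omega_\epsilon$ is not uniformly bounded \emph{below} as $\epsilon\to 0$, since the measure $(\pi^*\omega_\epsilon)^n$ degenerates to one with vanishing density along the exceptional divisor. My strategy exploits that where $G_\epsilon$ is very negative the measure $(\pi^*\omega_\epsilon)^n=e^{F_{\log}+G_\epsilon}\theta_Y^n$ is correspondingly small, so that $|G_\epsilon|^p e^{G_\epsilon}$ stays bounded. Applying Lemma \ref{l:logtype} to each $\omega_\epsilon$ gives
\[
\log\frac{(\pi^*\omega_\epsilon)^n}{\theta_Y^n}=F_{\log}+G_\epsilon
\]
with $F_{\log}$ independent of $\epsilon$. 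For $\epsilon\in(0,1]$ the binomial expansion yields the measure inequality $(\pi^*\omega_\epsilon)^n\le(\pi^*\omega_1)^n$; combined with continuity of $G_1$ on the compact space $Y$, this yields the \emph{uniform upper bound} $G_\epsilon\le M:=\sup_Y G_1$.

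With $G_\epsilon\le M$ in hand and $V_{\omega_\epsilon}\in[[\omega_Z]^n,V_{\omega_1}]$ uniformly pinched away from $0$ and $+\infty$, the entropy integral is dominated by contributions of $|F_{\log}|^p$ and $|G_\epsilon|^p$ integrated against $(\pi^*\omega_\epsilon)^n$. The $|F_{\log}|^p$ piece is bounded by $e^M\int_Y|F_{\log}|^p(\pi^*\omega_1)^n$, which is finite because the logarithmic singularities of $F_{\log}$ are integrable against the finite measure $(\pi^*\omega_1)^n$. For the $|G_\epsilon|^p$ piece I split $Y$ into $\{G_\epsilon\ge -1\}$ and $\{G_\epsilon<-1\}$: on the first set $|G_\epsilon|\le\max(M,1)$ trivially bounds the contribution, while on the second
\[
\int_{\{G_\epsilon<-1\}}|G_\epsilon|^p e^{F_{\log}+G_\epsilon}\theta_Y^n\le\Bigl(\sup_{x\ge 1}x^p e^{-x}\Bigr)\int_Y e^{F_{\log}}\theta_Y^n,
\]
and the right-hand integral is finite since $e^{F_{\log}}\theta_Y^n\le e^M(\pi^*\omega_1)^n$ has finite total mass. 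Combining these estimates yields (3) with constants independent of $\epsilon$, completing the proof.
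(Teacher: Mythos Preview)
Your proof is correct. For items (1), (2), and (4) you argue exactly as the paper does, defining the same $\gamma=\frac{((\rho\circ\pi)^*\omega_Z)^n}{\theta_Y^n}$.

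For item (3) your route genuinely differs from the paper's. The paper first observes, via Lemma~\ref{l:logtype}, that $\frac{(\pi^*\omega_1)^n}{\theta_Y^n}$ has polynomial (fractional) poles along $D$ and hence lies in $L^{1+\delta_0}(Y,\theta_Y)$ for some $\delta_0>0$; by monotonicity the same $L^{1+\delta_0}$ bound holds for all $\omega_\epsilon$, and then the entropy bound follows from the elementary inequality $\int |H|^p e^H\theta_Y^n\le C'+C'\int e^{(1+\delta_0)H}\theta_Y^n$. Your argument instead exploits the decomposition $F_{\log}+G_\epsilon$ directly: you use the monotonicity $(\pi^*\omega_\epsilon)^n\le(\pi^*\omega_1)^n$ to get $G_\epsilon\le\sup_Y G_1$, handle the $|F_{\log}|^p$ contribution by domination, and deal with the region $\{G_\epsilon<-1\}$ via the bound $\sup_{x\ge 1}x^pe^{-x}<\infty$. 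This is a more hands-on splitting that avoids the $L^{1+\delta_0}$ step, whereas the paper's approach has the advantage that the intermediate $L^{1+\delta_0}$ estimate \eqref{eqn:uniform-1+delta} is reused later (e.g.\ in \eqref{eqn:zz} and in Lemma~\ref{lemma:goodapp}).

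Two minor points: your claim that $\int_Y|F_{\log}|^p(\pi^*\omega_1)^n<\infty$ deserves a word of justification, since the density $e^{F_{\log}+G_1}$ has a pole $|s_{E_2}|^{-a}$ along $E_2$; it is true because logarithmic factors are dominated by any positive power, so $|\log|s_{E_2}||^p|s_{E_2}|^{-a}$ remains locally integrable. Also, in your last displayed inequality the constant should be $e^{-\inf_Y G_1}$ rather than $e^M=e^{\sup_Y G_1}$, since $e^{F_{\log}}\theta_Y^n=e^{-G_1}(\pi^*\omega_1)^n$; this is harmless as $G_1$ is continuous on the compact $Y$ and hence bounded below.
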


\begin{proof} 
The item $(1)$ holds, since the local potentials of $\omega_\epsilon$ are orbifold smooth, and hence bounded.  
The  item $(2)$ follows from the the monotonicity of $\omega_\epsilon$ in $\epsilon$ and  the fact that  $\omega_Z$ is K\"ahler.    
For the item $(4)$, by the monotonicity of $\omega_\epsilon$ again, it is enough to set 
\begin{equation}\label{eqn:gamma}
\gamma = \frac{(\rho\circ \pi)^*\omega_Z^n}{\theta_Y^n}. 
\end{equation}

 
It remains to prove the item $(3)$ on Nash-Yau entropies.  
We notice that $\frac{\pi^*{\omega_1^n}}{\theta_Y^n}$ is integrable on $(Y,\theta_Y)$,  since $\omega_1$ is orbifold smooth.  
From Lemma \ref{l:logtype}, we see that  $\frac{\pi^*{\omega_1^n}}{\theta_Y^n}$ has polynomial  poles (with rational exponents)  along $D$. 
It follows that  $\frac{\pi^*{\omega_1^n}}{\theta_Y^n}$ is $L^{1+\delta_0}$ integrable for some $\delta_0>0$. 
Then by the monotonicity of $\frac{\pi^*\omega_\epsilon^n}{\theta_Y^n}$ in $\epsilon$, we have 
\begin{equation}\label{eqn:uniform-1+delta}
\int_Y\left(\frac{\pi^*{\omega^n_\epsilon}}{\theta_Y^n}\right)^{1+\delta_0}{\theta_Y^n}\leq C 
\end{equation} for some constant $C$ independent of $\epsilon$.   
Since the volumes $V_{\omega_\epsilon}$ are bounded between  $[\omega_Z]^n$ and $[\omega_1]^n$, up to enlarging the constant $C$, we have 
\begin{equation}\label{eqn:uniform-1+delta'}
\int_Y\left( \frac{1}{V_{\omega_\epsilon}} \cdot \frac{\pi^*{\omega^n_\epsilon}}{\theta_Y^n}\right)^{1+\delta_0}{\theta_Y^n}\leq C 
\end{equation}

For any $p>1$, and  for any smooth function $H$ on $Y$, 
we also have the following elementary inequality 
\begin{equation}\label{eqn:entropy}
    \int_Y|H|^pe^H \theta_Y^n\leq  C'+ C' \int_Y e^{(1+\delta_0)H}\theta_Y^n,
\end{equation} 
where $C'$ is a constant depending only on $(Y,\theta_Y)$,  $p$ and $\delta_0$. 
Hence the $p$-th Nash-Yau entropies of $\omega_\epsilon$ are uniformly bounded.  
\end{proof}

Note that by Lemma \ref{l:logtype}, we can write 
\[\log  \left(\frac{1}{V_{\omega_\epsilon}} \cdot \frac{ \pi^*{\omega_\epsilon^n}}{\theta_Y^n} \right)=F_{\log}+G_\epsilon, \] 
where  $G_\epsilon$ is a bounded continuous function on $Y$, 
and \[F_{\log} = a(\log |s_{E_1}|_{h_{E_1}} - \log |s_{E_2}|_{h_{E_2}} ) \] 
which  depends only on $X$ and $Y$.   
It follows that  $G_\epsilon + \log V_{\omega_\epsilon}$ is increasing in $\epsilon$.  
By \eqref{eqn:uniform-1+delta'} and by comparing with $ G_1$, we can find a positive constant  $  C''>0$, independent of $\epsilon$, such that 
\begin{equation}\label{eqn:zz}
\int_Y e^{(1+\delta_0)F_{\log}}{\theta_Y^n}\leq C'',\,\,\int_Y e^{(1+\delta_0)G_\epsilon}{\theta_Y^n}\leq C''.\end{equation}

In the following argument, 
we will approximate $\pi^*\omega_\epsilon$ by a family of smooth K\"ahler forms $\omega_{\epsilon,j}$.  
By abuse of notation,  
we will omit the subscript $\epsilon$ and set $\omega:=\omega_\epsilon$. 
We choose a smooth closed $(1,1)$-form $\omega_0 \in [\omega]$. Since $\omega$ is orbifold smooth, it has continuous local potentials. 
Hence  there exists a unique $\varphi\in PSH(X, \omega_0)\cap \mathcal{C}^0(X)$ such that
$$\omega= \omega_0+\ddbar \varphi, ~\sup_X \varphi=0. $$
We set 
$$Q:=\log \left(\frac{1}{V_\omega} \cdot \frac{(\pi^*\omega)^n}{\theta_Y^n}\right)=F_{\log} + G.  $$
Then $G$ satisfies the assumptions of  Proposition \ref{prop:appg}. 
In addition, by Proposition \ref{prop:VerifyAKclass}, we have 
$$\mathcal{N}_p(\omega) = \frac{1}{V_\omega} \int_Y |Q|^p (\pi^*\omega)^n =   \int_Y |Q|^p e^Q \cdot \theta_Y^n \leq K .$$

\begin{lemma}\label{lemma:goodapp}
We can find a sequence of smooth functions $\{Q_j\}_{j\gg 1}$ on $Y$,   
which converges to $Q$, smoothly on any compact subsets of $Y\setminus D$.  
In addition,  the  following properties hold.   
\begin{enumerate}

\item Let $\gamma$ be the function defined in \eqref{eqn:gamma}.  
There is a constant $c>0$, independent of $\epsilon$ and $j$, such that 
for all sufficiently large $j$,  we have
\[  
 e^{Q_j} \geq  c \cdot  \gamma \cdot |s_{E_2}|_{h_{E_2}}^a. 
\]

%
\item 
For any $\delta\geq 0$ small enough, there exists $K'>0$, independent of $\epsilon$ and 
$j$,   
such that for all $j>0$ sufficiently large, we have 
\begin{equation}\label{app} 
\|e^{Q_j}\|_{L^{1+\delta}(Y, \theta_Y)} \leq K'. 
\end{equation}

\medskip

\item 
%
There exists $N_\epsilon>0$ and $C_\epsilon>0$, possibly depends on $\epsilon$,  
such that  
$$ \sup_j \|\nabla^2 Q_j \|_{\theta_Y} \leq C_\epsilon |s_D|^{-2N_\epsilon}_{h_{{D}}}. $$
\end{enumerate}
\end{lemma}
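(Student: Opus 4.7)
The plan is to split the decomposition $Q = F_{\log} + G$ from Lemma \ref{l:logtype} and regularize each piece separately. I set
\[
F_{\log,j} := \tfrac{a}{2}\log\!\bigl(|s_{E_1}|^2_{h_{E_1}} + \tau_j\bigr) - \tfrac{a}{2}\log\!\bigl(|s_{E_2}|^2_{h_{E_2}} + \tau_j\bigr),
\]
for a sequence $\tau_j \searrow 0$, and take $G_{\sigma_j}$ to be the convolution regularization of $G = G_\epsilon$ from Proposition \ref{prop:appg}, with $\sigma_j \searrow 0$. Then $Q_j := F_{\log,j} + G_{\sigma_j}$ is smooth on $Y$, and the parameters $\tau_j, \sigma_j$ may depend on $\epsilon$. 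Since $\supp E_1$, $\supp E_2$ and the singular locus of $G$ are all contained in $D$, smooth convergence $Q_j \to Q$ on compact subsets of $Y\setminus D$ is immediate from Proposition \ref{prop:appg}(2) and the explicit form of $F_{\log,j}$.

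Item (3) is routine: an explicit computation yields $\|\partial\bar{\partial}\log(|s_{E_i}|^2_{h_{E_i}} + \tau_j)\|_{\theta_Y} \leq C|s_{E_i}|_{h_{E_i}}^{-2}$ uniformly in $\tau_j$, hence controlled by $C|s_D|_{h_D}^{-2N}$ for some $N$, and Proposition \ref{prop:appg}(3) handles the $G_{\sigma_j}$ contribution.

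For item (2), I would first note the uniform upper bound $G_\epsilon \leq M$, which follows from the monotonicity of $G_\epsilon + \log V_{\omega_\epsilon}$ in $\epsilon$ together with continuity of $G_1$; since the convolution kernel is a probability measure, $G_{\sigma_j} \leq M$ uniformly in $\epsilon$ and $j$. Using the pointwise bounds $(|s_{E_1}|^2 + \tau_j)^{a(1+\delta)/2} \leq C$ and $(|s_{E_2}|^2 + \tau_j)^{a(1+\delta)/2} \geq |s_{E_2}|^{a(1+\delta)}$, one obtains
\[
\int_Y e^{(1+\delta) Q_j}\,\theta_Y^n \;\leq\; C e^{(1+\delta)M} \int_Y |s_{E_2}|_{h_{E_2}}^{-a(1+\delta)}\,\theta_Y^n,
\]
which is finite for any $\delta \leq \delta_0$: the disjointness of $\supp E_1$ and $\supp E_2$ combined with the uniform $L^{1+\delta_0}$-integrability of $e^{F_{\log}}$ from \eqref{eqn:zz} forces $|s_{E_2}|^{-a} \in L^{1+\delta_0}(Y,\theta_Y^n)$.

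For item (1), the starting observation is the uniform lower bound $e^Q \geq c_0 \gamma$, coming from $(\pi^*\omega_\epsilon)^n \geq (\pi^*\rho^*\omega_Z)^n$ and the boundedness of $V_{\omega_\epsilon}$; combined with $G_\epsilon \leq M$, this rewrites as $|s_{E_1}|^a \geq c_1 \gamma|s_{E_2}|^a$ pointwise uniformly in $\epsilon$. I then split $Y$ into $\{|s_{E_2}|^2 \geq \tau_j\}$ and $\{|s_{E_2}|^2 < \tau_j\}$. On the first, the bound $(|s_{E_2}|^2+\tau_j)^{a/2} \leq 2^{a/2}|s_{E_2}|^a$ yields $e^{Q_j} \geq 2^{-a/2} c_1 \gamma \, e^{G_{\sigma_j}}$. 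On the second, the disjointness of $\supp E_1$ and $\supp E_2$ forces $|s_{E_1}| \geq c_{E_1} > 0$, whence $e^{Q_j}$ is bounded below by a positive constant while the target $c\gamma|s_{E_2}|^a \leq c(\sup\gamma)\tau_j^{a/2}$ becomes small. The hardest part will be making the constant $c$ in (1) genuinely independent of $\epsilon$: the naive lower bound on $e^{G_{\sigma_j}}$ involves $\inf G_\epsilon$, which degenerates as $\epsilon \to 0$ because $G_\epsilon \to -\infty$ on the $\rho\circ\pi$-exceptional divisor. The factor $|s_{E_2}|^a$ on the right-hand side of (1) is precisely what makes uniformity possible: wherever $G_\epsilon$ becomes very negative one is close to $E_2$, where $|s_{E_2}|^a$ is correspondingly small, and a careful localization balancing the choices of $\tau_j$ and $\sigma_j$ against the $\epsilon$-dependent profile of $G_\epsilon$ near $E_2$ should deliver the uniform bound.
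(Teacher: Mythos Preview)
Your construction $Q_j = F_{\log,j} + G_{\sigma_j}$ is exactly the paper's, and your treatment of items (2) and (3) is essentially the same. The gap is in item (1): the case-splitting and the proposed ``careful localization balancing $\tau_j,\sigma_j$ against the $\epsilon$-dependent profile of $G_\epsilon$'' is neither carried out nor necessary, and the difficulty you flag is an artifact of separating $G_{\sigma_j}$ from $F_{\log,j}$ too early.

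The missing observation is this. Since $G_\epsilon$ is continuous on the compact manifold $Y$ (Lemma~\ref{l:logtype}), the convolutions $G_{\sigma_j}$ converge to $G_\epsilon$ \emph{uniformly on all of $Y$}, not only on compact subsets of $Y\setminus D$. Hence for $j$ large (the threshold may depend on $\epsilon$, which the statement allows) one has $G_{\sigma_j}\geq G_\epsilon-1$ pointwise. Now compare $Q_j$ to $Q$ rather than to its pieces:
\[
Q_j - \log(c'\gamma) \;\geq\; (F_{\log,j}-F_{\log}) + \bigl(Q-\log(c'\gamma)\bigr) - 1 \;\geq\; (F_{\log,j}-F_{\log}) - 1,
\]
where the uniform bound $Q\geq \log(c'\gamma)$ comes from $(\pi^*\omega_\epsilon)^n\geq (\pi^*\rho^*\omega_Z)^n$ and is $\epsilon$-independent. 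Since the $E_1$-term in $F_{\log,j}-F_{\log}$ is nonnegative, one gets
\[
F_{\log,j}-F_{\log}\;\geq\;-\tfrac{a}{2}\log\frac{|s_{E_2}|^2+\tau_j}{|s_{E_2}|^2}\;\geq\; a\log|s_{E_2}|_{h_{E_2}} - \lambda
\]
with $\lambda=\sup_Y\tfrac{a}{2}\log(|s_{E_2}|^2_{h_{E_2}}+1)$ independent of $\epsilon$ and $j$. This gives $e^{Q_j}\geq c\,\gamma\,|s_{E_2}|^a$ with $c=c'e^{-(1+\lambda)}$, with no case analysis and no $\epsilon$-balancing. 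Your route tries to lower-bound $e^{G_{\sigma_j}}$ by $c'|s_{E_2}|^a$ directly, which is genuinely hard because the $|s_{E_2}|^a$ decay is encoded in $F_{\log}$, not in $G_\epsilon$; grouping $G_{\sigma_j}$ back with $Q$ via $G_{\sigma_j}\geq G_\epsilon-1$ is what makes the $\epsilon$-independence transparent.
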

\begin{proof} We approximate the two functions $F_{\log}$ and ${G}$ separately. For the approximation of $F_{\log}$, we set
$$F_j=   \frac{a}{2} \cdot \log \left(\frac{|s_{E_1}|_{h_{E_1}}^2+ j^{-1}}{|s_{E_2}|_{h_{E_2}}^2+ j^{-1}}  \right).  $$ 
Then  $\{F_j\}$  converges to $F_{\log}$ smoothly on any compact subset of $Y\setminus D$.   
We approximate the function $G$ by a sequence of smooth functions $G_j$ according to Proposition \ref{prop:appg}. 
More precisely, we may let $G_j$ be $G_{\frac{1}{j}}$ with the notation in Proposition \ref{prop:appg}.  
Let $Q_j=F_j + G_j$.    
Then we can verify that the item $(3)$ holds.

For the item (1), we first recall that $ \frac{(\pi^*\omega)^n}{\theta_Y^n} \geq \gamma$ by Proposition \ref{prop:VerifyAKclass}.   
Since $V_{\omega}$ is bounded by positive numbers independent of $\epsilon$, 
we deduce that 
\[Q= F+G\geq \log  (c'\cdot \gamma )\] 
for some constant $c'>0$ independent of $\epsilon$.   
Since $\{G_j\}$ converges to $G$ uniformly on $Y$, 
we may assume that $G_j\geq G-1$. 
Then 
\begin{eqnarray*}
Q_j- \log c'\gamma &\ge&  ( F_j - F )+ (F + G -\log c'\gamma)  -1  \\ 
& \geq &  \frac{a}{2} \log \left( \frac{|s_{E_2}|_{h_{E_2}}^2}{|s_{E_2}|_{h_{E_2}}^2+ j^{-1}}   \right) + 0 - 1. 
\end{eqnarray*} 
We note that $\frac{a}{2} \log (|s_{E_2}|_{h_{E_2}}^2+ j^{-1}) \leq \frac{a}{2} \log (|s_{E_2}|_{h_{E_2}}^2+ 1) $  is bounded from above  by some constant $\lambda$ depending only on $(E_2,h_{E_2},a)$. 
Hence we deduce that  
\[
Q_j- \log c' \gamma  \geq a \log |s_{E_2}|_{h_{E_2}}   -(1+\lambda)
\]
By setting $c=c'\cdot e^{-(1+\lambda)}$, we obtain the item $(1)$.

For the item $(2)$, we first fix some $\delta \geq 0$ small enough. 
Then the $L^{1+\delta}$-norm of $e^{F+G}$ on $(Y,\theta_Y)$ is bounded by  some constant independent of $\epsilon$, as shown in \eqref{eqn:uniform-1+delta'}.  
Since  $\{G_j\}$ converges to $G$ uniformly on $Y$, we only need to prove that  
the $L^{1+\delta}$-norms of $e^{F_j}$ are bounded by some constant, independent of $\epsilon$ and $j$.   
We have the following estimate 
\begin{equation}
\label{eqn:dominated-conv}
    e^{F_j} \le (|s_{E_1}|_{h_{E_1}}^2+1)^{\frac{a}{2}}  \cdot  |s_{E_2}|_{h_{E_2}}^{-a}.  
\end{equation}
We have seen in \eqref{eqn:zz}, that $e^{(1+\delta)F_{\log}}$ is integrable. 
Since $E_1$ and $E_2$ do not have common component, 
it  follows that $|s_{E_2}|_{h_{E_2}}^{-a(1+\delta)}$ is integrable, and so is the RHS of the inequality above.   
By the dominated convergence theorem, we deduce the following convergence, 
\[
 \|e^{F_j}\|_{L^{1+\delta}(Y, \theta_Y)} \to \|e^{F_{\log}}\|_{L^{1+\delta}(Y, \theta_Y)}. 
\]
By  \eqref{eqn:zz} again, we can deduce a uniform constant $K'$ for the item $(2)$.  
This completes the proof of the lemma.   
\end{proof}

We will now use the smooth functions $Q_j$ to construct smooth forms  approximating  $\omega$. 
Recall that  $\omega=\omega_0+\ddbar\varphi$.
Pulling back to $Y$, we have
\begin{equation}\label{eqn:ss}
 (\pi^*\omega_0+\ddbar\pi^*\varphi)^n=V_\omega\cdot e^Q\theta_Y^n.
\end{equation}
Let $\{\delta_j\}$ be a sequence of positive real numbers in $(0,1)$ converging to $0$. 
We  consider the following perturbed complex Monge-Amp\`ere equation
\begin{equation}\label{eq}
 (\pi^*\omega_0 + \delta_j \theta_Y + \ddbar \varphi_j)^n = e^{Q_j + c_j} \theta_Y^n, ~~\sup_X \varphi_j = 0,
 \end{equation}
where $c_j$ is the normalizing constant satisfying 
$$\int_Y e^{Q_j+ c_j} \theta_Y^n = \left(\pi^* [\omega_0] + \delta_{{j}} [\theta_Y] \right)^n. $$
Then the solution $\varphi_j$ exists and is smooth by Yau's theorem.  We define 
\begin{equation}\label{eqn:omegaj}
\omega_j = \pi^*\omega_0 + \delta_j \theta_Y + \ddbar \varphi_j. 
\end{equation}


\begin{lemma}\label{lemma:unisob} There exist  constants $ {A}^\circ,{K}^\circ,{p}^\circ$ and a non-negative continuous function $ {\gamma}^\circ$ on $Y$, all independent of $\epsilon$ and $j$, satisfying the following property. 
There is an integer $M_\epsilon>0$, such that $\omega_{j}\in  \mathcal{W}(Y, \theta_Y, n, p^\circ, A^\circ, K^\circ, \gamma^\circ)$ whenever $j \geq M_\epsilon$.
\end{lemma}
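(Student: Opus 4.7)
The plan is to verify the three conditions of Definition~\ref{def:Wclass} for $\omega_j$, by exploiting the Monge-Amp\`ere equation \eqref{eq} and the estimates on $Q_j$ established in Lemma~\ref{lemma:goodapp}. Condition $(1)$ is immediate: since $[\omega_j] = \pi^*[\omega_0] + \delta_j[\theta_Y]$ with $\delta_j \in (0,1)$ and $[\omega_0] = [\omega_\epsilon]$ satisfying the cohomological bound from Proposition~\ref{prop:VerifyAKclass}(2), the intersection $[\omega_j]\cdot[\theta_Y]^{n-1}$ is bounded by some $A^\circ$ independent of $\epsilon$ and $j$; in particular the total volume $V_{\omega_j} = [\omega_j]^n$ lies in a fixed positive interval. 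The key preparation is then a uniform bound $|c_j| \le C$ independent of $\epsilon$ and $j$. From the defining relation $e^{c_j}\int_Y e^{Q_j}\theta_Y^n = V_{\omega_j}$, this reduces to two-sided control of $\int_Y e^{Q_j}\theta_Y^n$. The upper bound is Lemma~\ref{lemma:goodapp}(2) specialized to $\delta = 0$, while the lower bound uses Lemma~\ref{lemma:goodapp}(1): $e^{Q_j} \geq c\cdot \gamma\cdot |s_{E_2}|_{h_{E_2}}^a$, whose integral over $(Y,\theta_Y)$ is a fixed strictly positive constant depending only on $X$ and $Y$.

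With this uniform control of $c_j$, condition $(3)$ becomes immediate:
$$\frac{\omega_j^n}{\theta_Y^n} \;=\; e^{c_j}e^{Q_j} \;\geq\; e^{-C}\cdot c\cdot \gamma\cdot |s_{E_2}|^a_{h_{E_2}},$$
which furnishes a valid choice of $\gamma^\circ$, since the right-hand side is a fixed non-negative continuous function on $Y$ that vanishes only on a proper analytic subvariety. For condition $(2)$, the Nash-Yau entropy rewrites, using $\omega_j^n = e^{Q_j+c_j}\theta_Y^n$, as
$$\mathcal{N}_{p^\circ}(\omega_j) \;=\; \int_Y\bigl|Q_j + c_j - \log V_{\omega_j}\bigr|^{p^\circ}e^{Q_j+c_j}\theta_Y^n.$$
Since $|c_j - \log V_{\omega_j}|$ is uniformly bounded, the inequality $|a+b|^p \le 2^{p-1}(|a|^p + |b|^p)$ reduces the problem to controlling $\int_Y |Q_j|^{p^\circ} e^{Q_j}\theta_Y^n$, which by the elementary entropy estimate \eqref{eqn:entropy} is in turn controlled by $\int_Y e^{(1+\delta_0)Q_j}\theta_Y^n$. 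The latter is uniformly bounded by Lemma~\ref{lemma:goodapp}(2), giving some $K^\circ$ independent of $\epsilon$ and $j$.

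The argument does not encounter any serious obstacle, because the real analytic difficulty has been absorbed into Lemma~\ref{lemma:goodapp}: namely, the construction of smooth approximations $Q_j$ of $Q = \log(V_\omega^{-1}(\pi^*\omega)^n/\theta_Y^n)$ that simultaneously admit a uniform $L^{1+\delta}$ bound on $e^{Q_j}$ and a pointwise lower bound of the form $c\cdot \gamma\cdot |s_{E_2}|^a_{h_{E_2}}$. The present lemma is the routine but essential step of showing that the solutions $\omega_j$ of the perturbed Monge-Amp\`ere equation inherit these properties uniformly, thereby transferring the $\mathcal{AK}$-type control of the family $\{\omega_\epsilon\}$ to the uniform $\mathcal{W}$-type control required to invoke Theorem~\ref{thm:soborbi} via the approximation strategy of \cite{GPSS}. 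The threshold $M_\epsilon$ enters only to ensure that the approximations $Q_j$ lie in the regime where Lemma~\ref{lemma:goodapp} applies.
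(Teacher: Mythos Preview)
Your proof is correct and follows essentially the same approach as the paper: verify the three $\mathcal{W}$-class conditions using Lemma~\ref{lemma:goodapp}, with the central step being a uniform bound on $|c_j|$. The only minor difference is that the paper bounds $|c_j|$ by showing $\int_Y e^{Q_j}\theta_Y^n \to \int_Y e^Q\theta_Y^n$ via dominated convergence (so that $c_j \to \log V_{\omega_\epsilon}$, which is uniformly bounded), whereas you obtain the two-sided bound on $\int_Y e^{Q_j}\theta_Y^n$ directly from items (1) and (2) of Lemma~\ref{lemma:goodapp}; both arguments are equivalent in substance.
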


\begin{proof}
We observe that $[\omega_j]^n$ and $[\omega_j]\cdot [\theta_Y]^{n-1}$ are uniformly bounded. 
This gives a constant $A^\circ$.  
Moreover $[\omega_j]^n\geq [(\rho\circ \pi)^*\omega_Z]^n>0$.  
Next, we will show that $\frac{\omega_j^n}{\theta_Y^n} = e^{Q_j+c_j}$ is bounded from below by some $\gamma^\circ$.  
By the item $(1)$  of Lemma \ref{lemma:goodapp}, 
it is enough to show that, 
for  $j$ sufficiently large,  the following number 
$$ |{c_j}|=\left|\log\frac{\left(\pi^* [\omega_0] + \delta_{{j}} [\theta_Y] \right)^n}{\int_Ye^{Q_j}\theta_Y^n}\right|$$ 
is  bounded, by  a constant independent of $\epsilon$ and $j$. 
We recall that $Q_j=F_j+G_j$ such that $\{G_j\}$ converges uniformly to $G$. 
By \eqref{eqn:dominated-conv} and by using the dominated convergence theorems,  
we see that $\int_{(Y,\theta_Y)} e^{Q_j} \to \int_{(Y,\theta_Y)}e^Q$. 
It follows that the sequence $\{c_j\}$ converges to $\log  V_{\omega}$, which is bounded by constants independent of $\epsilon$.

It remains to prove that, there is some $p^\circ\geq 1$, 
such that the  $p^\circ$-th Nash-Yau entropy of $\omega_j$ 
\[
\mathcal{N}_{p^\circ}(\omega_j) =  \frac{1}{[\omega_j]^n}  \int_Y (Q_j+c_j)^{p^\circ} \cdot e^{Q_j+c_j} \cdot \theta_Y^n
\]
is  bounded by some constant $K^\circ$, for all $j$ sufficiently large.    
Let $p^\circ\geq 1$ be arbitrary.  
We have proved that, for $j$ sufficiently large,  
 $|c_j|$ and $([\omega_j]^n)^{-1}$ are bounded by constants independent of $\epsilon$ and $j$.   
Hence by using \eqref{eqn:entropy} and   \eqref{app}, 
we can deduce a uniform bound for $\mathcal{N}_{p^\circ}(\omega_j)$. 
This completes the proof of the lemma.  
\end{proof}

In order to show that the family $\{\omega_j\}$ converges to $\omega$, we first prove the  following uniform estimates for the potentials $\varphi_j$.  
\begin{lemma} 
\label{lemma:2nd} There exist $N'_\epsilon, C'_\epsilon>0$, possibly depend on $\epsilon$,  such that for all $j>0$ sufficiently large, 
$$\|\varphi_j\|_{L^\infty(Y)} \leq C'_\epsilon,  \ \Delta_{\theta_Y} \varphi_j \leq C_\epsilon'|s_D|_{h_D}^{-2N_\epsilon'}. $$

\end{lemma}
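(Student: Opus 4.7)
The plan is to prove the two estimates in order: first the $L^\infty$ bound on $\varphi_j$, then the Laplacian bound which relies on the first.

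For the $L^\infty$ bound, I would invoke a Kolodziej-type estimate for degenerate complex Monge-Amp\`ere equations. By Lemma \ref{lemma:unisob}, for $j\ge M_\epsilon$ the forms $\omega_j$ lie in the class $\mathcal{W}(Y,\theta_Y,n,p^\circ,A^\circ,K^\circ,\gamma^\circ)$ with constants independent of $j$. The density $\omega_j^n/\theta_Y^n = e^{Q_j+c_j}$ is uniformly bounded in $L^{1+\delta}(Y,\theta_Y)$ by Lemma \ref{lemma:goodapp}(2), and the normalising constants $c_j$ are uniformly bounded (as observed in the proof of Lemma \ref{lemma:unisob}). A standard Kolodziej-type $L^\infty$ estimate, or equivalently the $L^\infty$ estimate deduced from the $\mathcal{W}$-class machinery of \cite{GPSS}, then gives $\|\varphi_j\|_{L^\infty(Y)}\le C'_\epsilon$ uniformly in $j$.

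For the Laplacian bound, I would use the Aubin-Yau-Siu inequality together with a singular weight that absorbs the blow-up of $\Delta_{\theta_Y}Q_j$ along $D$. Set $H:=\tr_{\theta_Y}\omega_j = n(1+\delta_j)+\Delta_{\theta_Y}\varphi_j$. Away from $D$ the standard computation gives
$$\Delta_{\omega_j}\log H \ge \frac{\Delta_{\theta_Y}(Q_j+c_j)}{H} - C_0\tr_{\omega_j}\theta_Y,$$
where $C_0$ depends only on a lower bound for the bisectional curvature of $\theta_Y$; by Lemma \ref{lemma:goodapp}(3), the numerator is bounded in absolute value by $C_\epsilon|s_D|^{-2N_\epsilon}_{h_D}$. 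I then introduce the test function
$$v_j := \log H + k\log|s_D|^{2}_{h_D} - A\varphi_j,$$
with constants $A,k>0$ to be chosen depending only on $\epsilon$. Since $\log|s_D|^2_{h_D}\to-\infty$ on $D$ and $\varphi_j$ is bounded, the supremum of $v_j$ is attained at some $x_0\in Y\setminus D$. Using $\Delta_{\omega_j}\log|s_D|^2_{h_D}=-\tr_{\omega_j}\Theta_{h_D}$ and $\Delta_{\omega_j}\varphi_j=n-\tr_{\omega_j}(\pi^*\omega_0+\delta_j\theta_Y)$, the inequality $\Delta_{\omega_j}v_j(x_0)\le 0$ yields
$$A\tr_{\omega_j}(\pi^*\omega_0+\delta_j\theta_Y) - k\tr_{\omega_j}\Theta_{h_D} - C_0\tr_{\omega_j}\theta_Y \le An + \frac{C_\epsilon|s_D(x_0)|^{-2N_\epsilon}_{h_D}}{H(x_0)}.$$
The relative ampleness of $-D$ gives $(\rho\circ\pi)^*\omega_Z-\eta\Theta_{h_D}$ K\"ahler for small $\eta>0$, from which $\pi^*\omega_0+\eta\Theta_{h_D}$ dominates a positive multiple of $\theta_Y$ (with constant depending on $\epsilon$). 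Choosing $A$ large in terms of $C_0$, $k$ and $\eta$, the left-hand side dominates $(C_0+1)\tr_{\omega_j}\theta_Y$ at $x_0$, so $\tr_{\omega_j}\theta_Y(x_0)$ is controlled by $An + |s_D(x_0)|^{-2N_\epsilon}/H(x_0)$. Combined with the pointwise AM-GM comparison
$$H \le \frac{(\tr_{\omega_j}\theta_Y)^{n-1}}{(n-1)!}\cdot e^{Q_j+c_j}$$
(applied at $x_0$, where $Q_j$ itself has a controlled polynomial blow-up), a routine rearrangement produces $H(x_0)\le C''_\epsilon|s_D(x_0)|^{-2M_\epsilon}_{h_D}$ for some $M_\epsilon$ depending only on $\epsilon$. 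Taking $k\ge M_\epsilon$ then bounds $v_j(x_0)$ from above; since $x_0$ is the global maximum, the bound propagates to all of $Y$, and combined with $\|\varphi_j\|_{L^\infty}\le C'_\epsilon$ it yields $\Delta_{\theta_Y}\varphi_j\le H(y)\le C_\epsilon'|s_D(y)|^{-2k}_{h_D}$ for all $y\in Y\setminus D$. Taking $N'_\epsilon=k$ finishes the proof.

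The main obstacle is the Laplacian estimate. The class $\pi^*[\omega_0]$ is nef but not K\"ahler on $Y$, which rules out a direct application of the classical Aubin-Yau bound, while the blow-up of $\Delta_{\theta_Y}Q_j$ along $D$ forces the singular weight $k\log|s_D|^2_{h_D}$ into the auxiliary function. The delicate point is balancing the constants $A$ and $k$ so that relative ampleness of $-D$ provides enough positivity to dominate both $C_0\tr_{\omega_j}\theta_Y$ and the singular right-hand side, while still letting the weight absorb the $|s_D|^{-2N_\epsilon}$ blow-up in the final estimate.
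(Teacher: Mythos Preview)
Your proposal is correct and follows essentially the same approach as the paper. The paper's own proof is a one-line reference: ``Thanks to Lemma~\ref{lemma:goodapp}, we can argue exactly as in \cite[Lemma 7.1]{GPSS}'', and what you have written is precisely the standard content of that reference --- a Ko\l odziej-type $L^\infty$ estimate from the uniform $L^{1+\delta}$ bound on the density, followed by an Aubin--Yau--Siu $C^2$ estimate with the singular barrier $k\log|s_D|^2_{h_D}$ to absorb the blow-up of $\Delta_{\theta_Y}Q_j$ coming from Lemma~\ref{lemma:goodapp}(3).

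One minor slip: in the prose you write that ``$\pi^*\omega_0+\eta\Theta_{h_D}$ dominates a positive multiple of $\theta_Y$'', but the setup of the paper has $(\rho\circ\pi)^*\omega_Z-\delta\Theta_{h_D}$ K\"ahler, so the correct sign is $-\eta\Theta_{h_D}$. Your displayed inequality already has $-k\,\tr_{\omega_j}\Theta_{h_D}$ on the left, so the computation itself is consistent; only the sentence needs correcting. There is also a small technical point: the smooth representative $\pi^*\omega_0$ need not itself be semipositive, so to make the domination by $\theta_Y$ work at the level of forms one either chooses $\omega_0$ compatibly with the fixed K\"ahler form $(\rho\circ\pi)^*\omega_Z-\delta\Theta_{h_D}$, or adds an extra bounded potential to the test function. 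This is a routine adjustment and does not affect your strategy.
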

\begin{proof} Thanks to  Lemma \ref{lemma:goodapp},  
we can argue exactly as in  
\cite[Lemma 7.1]{GPSS}. 
\end{proof}

We can then deduce that  the sequence of smoothing metric $\omega_{j}$ converges locally and smoothly to $\omega$ away from the divisor $D$.  
\begin{lemma} \label{omapp} 
For any relatively compact open subset $\mathcal{K} \subset   Y\setminus D$ and for any integer $k\geq 0$, we have 
%
$$\lim_{j\rightarrow \infty} \|\varphi_j - \pi^*\varphi\|_{L^\infty(\mathcal{K})} =0, $$
$$\lim_{j\rightarrow \infty}  \|\omega_j - \omega\|_{\mathcal{C}^k(\mathcal{K})} =0.$$
%
\end{lemma}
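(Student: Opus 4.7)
The plan is to combine the uniform $L^\infty$ and Laplacian estimates from Lemma \ref{lemma:2nd} with standard interior higher-order regularity for complex Monge-Amp\`ere equations, and then to identify the subsequential limit with $\pi^*\varphi$ by invoking uniqueness of bounded solutions in big and nef classes. First I would fix a relatively compact open subset $\mathcal{K}\subset Y\setminus D$. On such a subset $|s_D|_{h_D}$ has a positive lower bound, so Lemma \ref{lemma:2nd} gives uniform (in $j$) bounds on $\|\varphi_j\|_{L^\infty(Y)}$ and on $\|\Delta_{\theta_Y}\varphi_j\|_{L^\infty(\mathcal{K})}$. By Lemma \ref{lemma:goodapp} combined with the proof of Lemma \ref{lemma:unisob} (in which the normalizing constants $c_j$ are shown to converge to $\log V_\omega$), the right-hand side $e^{Q_j+c_j}\theta_Y^n$ of the Monge-Amp\`ere equation \eqref{eq} admits uniform $\mathcal{C}^k(\mathcal{K})$ bounds for every $k\ge 0$, together with a uniform positive lower bound on $\mathcal{K}$.

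Next I would combine the uniform Laplacian bound with the uniform positivity of the Monge-Amp\`ere determinant to deduce that $\omega_j$ is uniformly equivalent to $\theta_Y$ on $\mathcal{K}$. Standard interior higher-order estimates for the complex Monge-Amp\`ere equation, namely Calabi's third-order estimate together with Evans-Krylov and Schauder bootstrapping, then yield uniform $\mathcal{C}^k_{\mathrm{loc}}(\mathcal{K}^{\circ})$ bounds on $\varphi_j$ for every integer $k\ge 0$. Exhausting $Y\setminus D$ by such relatively compact subsets and applying Arzel\`a-Ascoli with a diagonal argument, one extracts a subsequence $\varphi_{j_\ell}$ converging in $\mathcal{C}^\infty_{\mathrm{loc}}(Y\setminus D)$ to some function $\psi$.

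To identify $\psi$ with $\pi^*\varphi$ on $Y\setminus D$, I would simultaneously use the uniform global $L^\infty$ bound and the standard $L^1$-compactness of normalized $\pi^*\omega_0$-plurisubharmonic functions to extract, after a further subsequence, an $L^1(Y,\theta_Y)$-limit $\psi'\in \mathrm{PSH}(Y,\pi^*\omega_0)$ with $\sup_Y\psi'=0$. Since $D$ is pluripolar, $\psi'$ coincides with the upper-semicontinuous extension of $\psi$ across $D$. Passing to the limit in \eqref{eq} in the Bedford-Taylor sense, using the uniform $L^\infty$ bound and the smooth convergence of $Q_j+c_j\to Q+\log V_\omega$ away from $D$, one obtains
\begin{equation*}
(\pi^*\omega_0+\ddbar\psi')^n= V_\omega\cdot e^{Q}\theta_Y^n
\end{equation*}
globally on $Y$. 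The function $\pi^*\varphi$ is a bounded $\pi^*\omega_0$-psh solution of the same equation in the class $[\pi^*\omega_0]$, which is big and nef. By the uniqueness of bounded pluripotential solutions of the Monge-Amp\`ere equation in big and nef classes (Boucksom-Eyssidieux-Guedj-Zeriahi, cf.\ also Dinew), $\psi'$ and $\pi^*\varphi$ differ by an additive constant; the normalization $\sup=0$ on each side forces the constant to vanish. Since every subsequential limit coincides with $\pi^*\varphi$, the full sequence $\varphi_j$ converges in $\mathcal{C}^\infty_{\mathrm{loc}}(Y\setminus D)$ to $\pi^*\varphi$, which yields both claimed convergences.

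The main obstacle is the identification step: because $[\pi^*\omega_0]$ is only big and nef on $Y$ (it is merely semipositive, being the pullback of a K\"ahler class under a nontrivial bimeromorphic morphism), one cannot appeal to the classical Calabi-Yau uniqueness for K\"ahler classes, and must instead invoke uniqueness of bounded solutions in the pluripotential sense. Once this is in hand, the remaining steps are a fairly standard combination of \emph{a priori} interior estimates on $Y\setminus D$ and global $L^\infty$/$L^1$ compactness.
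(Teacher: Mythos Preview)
Your proposal is correct and follows essentially the same route as the paper: uniform $L^\infty$ and Laplacian bounds from Lemma~\ref{lemma:2nd}, then Evans--Krylov and bootstrapping for interior $\mathcal{C}^k$ estimates on compact subsets of $Y\setminus D$. The paper's own proof is very terse and stops at ``local higher order estimates'', leaving the identification of the limit implicit; you make this step explicit by invoking pluripotential uniqueness in the big and nef class $[\pi^*\omega_0]$ (Boucksom--Eyssidieux--Guedj--Zeriahi, Dinew), which is the right tool here since $\pi$ is a genuine bimeromorphic modification and $[\pi^*\omega_0]$ is not K\"ahler on $Y$. This extra care is warranted and buys you the passage from subsequential to full-sequence convergence.
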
  
\begin{proof}Note that $\omega_j$ is smooth outside $D$ and the sequence $\{Q_j\}$ converges smoothly outside $D$. By the second inequality of Lemma \ref{lemma:2nd}, we have a uniform $\mathcal{C}^2$ estimates of $\varphi_j$ for any compact set $K$ inside  $Y\setminus D$. 
By Evans-Krylov theory, we can obtain local higher order estimates for $\varphi_j$,  uniformly away from $Y\setminus D$.
\end{proof}

Now we can conclude Theorem \ref{thm-orbifold-AK-property}. 

\begin{proof}[{Proof of Theorem \ref{thm-orbifold-AK-property}}] 
From Lemma \ref{lemma:goodapp} to Lemma \ref{omapp},  We have proved that for each $\omega_\epsilon$, it admits a sequence of  approximations $\{\omega_j\}$, 
belonging to  the same class $ \mathcal{W}(Y, \theta_Y, n, p^\circ, A^\circ, K^\circ, \gamma^\circ)$. 
By the same argument as in   \cite[Section 8]{GPSS},  
we can prove the statements of Theorem \ref{thm:soborbi}  for the family $\{\omega_\epsilon\}$, uniformly in $\epsilon$.   
\end{proof}

\begin{remark}\label{rmk:heatuni} For orbifold smooth K\"ahler form $\omega=\omega_\epsilon$, the existence of orbifold smooth heat kernel is known to exist \cite[Proposition 4.1]{Chiang90}. 
Following the same lines of \cite[Corollary 10.5]{GPSS}, we can verify that the orbifold heat kernel is identical with the heat kernel in Definition \ref{def:Heatkernel} for $\omega$.  
\end{remark}

We also need the following statement in the next section. 

\begin{lemma}
\label{lemma:convergence-L1} 
Let $\eta$ be a continuous function on $Y\setminus D$ such that $|\eta|$ is bounded by  $-\alpha \cdot \log |s_D|_{h_D} + \beta$, where $\alpha,\beta>0$ are constants. 
Then the following convergence holds, 
\[
\int_{(Y,\omega_j)} \eta  \to  \int_{(Y,\omega)} \eta    \mbox{ when }  j\to +\infty.  
\]
\end{lemma}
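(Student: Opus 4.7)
The plan is a cut-off argument. Since $\omega_j\to\pi^*\omega$ smoothly on every compact subset of $Y\setminus D$ by Lemma~\ref{omapp}, the bulk of the integral converges without difficulty; what requires care is the tail near the divisor $D$, where both $\eta$ and the volume densities have mild singularities. I would control this tail uniformly in $j$ via H\"older combined with the uniform $L^{1+\delta_0}$ bound of Lemma~\ref{lemma:goodapp}(2).

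For each small $\delta>0$, set $T_\delta:=\{x\in Y\ |\ |s_D(x)|_{h_D}<\delta\}$. On the compact set $Y\setminus T_\delta\subset Y\setminus D$, the function $\eta$ is bounded and continuous, and $\omega_j^n$ converges smoothly to $(\pi^*\omega)^n$; dominated convergence therefore gives
\[
\int_{Y\setminus T_\delta}\eta\cdot \omega_j^n\ \longrightarrow\ \int_{Y\setminus T_\delta}\eta\cdot (\pi^*\omega)^n
\]
as $j\to\infty$, for each fixed $\delta$. For the tail, let $q=(1+\delta_0)/\delta_0$ be the H\"older conjugate of $1+\delta_0$. Recall that $\omega_j^n/\theta_Y^n=e^{Q_j+c_j}$ with $\{c_j\}$ bounded, as shown in the proof of Lemma~\ref{lemma:unisob}; together with Lemma~\ref{lemma:goodapp}(2) this yields a uniform bound on $\|\omega_j^n/\theta_Y^n\|_{L^{1+\delta_0}(Y,\theta_Y)}$ in $j$, and the estimate \eqref{eqn:uniform-1+delta} provides the analogous bound for $(\pi^*\omega)^n/\theta_Y^n$. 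The hypothesis $|\eta|\leq -\alpha\log|s_D|_{h_D}+\beta$ puts $\eta$ into $L^q(Y,\theta_Y)$ for every $q\geq 1$, since powers of the logarithm are integrable against any smooth volume form. Hence by H\"older,
\[
\Bigl|\int_{T_\delta}\eta\cdot \omega_j^n\Bigr|\ \leq\ \|\eta\|_{L^q(T_\delta,\theta_Y)}\cdot \bigl\|\omega_j^n/\theta_Y^n\bigr\|_{L^{1+\delta_0}(Y,\theta_Y)}\ \leq\ C\,\|\eta\|_{L^q(T_\delta,\theta_Y)},
\]
with an identical bound when $\omega_j$ is replaced by $\pi^*\omega$.

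Since $\eta\in L^q(Y,\theta_Y)$ and the $\theta_Y$-measure of $T_\delta$ tends to $0$ as $\delta\to 0$, both tail integrals tend to $0$ as $\delta\to 0$, uniformly in $j$. A standard $\varepsilon/3$ argument, choosing $\delta$ small first and then $j$ large, concludes the proof. The only mildly delicate point is that the $L^{1+\delta_0}$ control of the volume densities must be genuinely uniform in $j$ so that the tail bound is $j$-independent; this is exactly the content of Lemma~\ref{lemma:goodapp}(2), and I do not foresee any further obstacle.
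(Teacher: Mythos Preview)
Your argument is correct. The cut-off plus H\"older strategy works exactly as you describe: the uniform $L^{1+\delta_0}$ bound on $\omega_j^n/\theta_Y^n$ from Lemma~\ref{lemma:goodapp}(2) together with the boundedness of $\{c_j\}$ controls the tail near $D$ independently of $j$, and Lemma~\ref{omapp} handles the interior.

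The paper's own proof is, however, shorter and takes a different route. Rather than splitting the domain, it finds a single pointwise dominating function for $\eta\cdot e^{Q_j+c_j}$ on all of $Y$: from the explicit form of $F_j$ one has $e^{Q_j+c_j}\le \nu\,(|s_{E_1}|_{h_{E_1}}^2+1)^{a/2}|s_{E_2}|_{h_{E_2}}^{-a}$ for a constant $\nu$ independent of $j$ (this is essentially \eqref{eqn:dominated-conv} combined with the uniform convergence of $G_j$ and the boundedness of $c_j$), and the product of this bound with $-\alpha\log|s_D|_{h_D}+\beta$ is $\theta_Y$-integrable. A single application of dominated convergence then finishes. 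In other words, the paper exploits a pointwise majorant for the density, whereas you use only its $L^{1+\delta_0}$ control. Your approach is slightly more robust---it would still work if only an integral bound were available---but in this setting the paper's direct majorant makes the proof a two-line affair.
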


\begin{proof}
We have seen in the proof of Lemma \ref{lemma:unisob} that  the sequence $\{c_j\}$ converges. 
Hence for $j$ sufficiently large, there is a constant $\nu$, independent of $j$, 
such that 
\[
e^{Q_j+c_j} \le \nu \cdot  (|s_{E_1}|_{h_{E_1}}^2+1)^{\frac{a}{2}}  \cdot  |s_{E_2}|_{h_{E_2}}^{-a}. 
\] 
We notice that the product of $-\alpha \cdot \log |s_D| + \beta$ with the RHS above is integrable on $(Y,\theta_Y)$.  
Hence we can conclude by using the dominated convergence theorem.  
\end{proof}

\section{Uniform $\mathcal{C}^0$-estimates on  Hermitian-Einstein metrics}  
\label{section:HE}

We fix the following notation for this section. 
Let $(Z,\omega_Z)$ be a compact K\"ahler variety of dimension $n$,  
which has quotient singularities in codimension $2$,  
and let $\mathcal{F}$ be a reflexive coherent sheaf on $Z$.  
We assume that $\mathcal{F}$ is $\omega_Z$-stable. 
Let $\rho \colon  X\to Z$ be an orbifold modification so that there is an orbifold structure $\mathfrak{X}$ over $X$.  We denote $\mathcal{E} = (\rho^*\mathcal{F})^{**}$. 
We may assume that there is an orbifold vector bundle $\mathcal{E}_{\orb}$ on $\mathfrak{X}$, which descend to $\mathcal{E}$, away from the $\rho$-exceptional locus and the branched locus of $\mathfrak{X}$,  see \cite[Section 9]{Ou2024}.  
We emphasize that, by construction, the indeterminacy locus of $\rho^{-1}$ has codimension at least $3$ in $Z$, and the codimension $1$ part of the branched locus of $\mathfrak{X}$ is $\rho$-exceptional. 
In addition, we can assume that there is some $\rho$-exceptional $\rho$-ample divisor (see \cite[Remark 8.2]{Ou2024}).      
Let $\omega_{\orb}$ be a K\"ahler current on $X$ which corresponds to  an orbifold K\"ahler form, and let $\omega_\epsilon = \rho^*\omega_Z +  \epsilon\omega_{\orb}$ for all $ 0< \epsilon\leq 1$.   
Without loss of the generality, we  assume that  $\omega_\orb \geq \rho^*\omega_Z$.  
Then the orbifold  vector bundle $\mathcal{E}_{\orb}$ is stable with respect to $\omega_\epsilon$ for all $\epsilon>0$ small enough by \cite[Claim 9.5]{Ou2024}. 
We fix an orbifold smooth Hermitian metric $h$ on $\mathcal{E}_\orb$.  
By abuse of notation, we also denote by $h$ the induced metrics on $\mathcal{E}$, 
which is well-defined at least on some dense Zariski open subset of $X$.  
Let $(\mathcal{L}_\orb, h_{\mathcal{L}})$ be the determinant line bundle of $(\mathcal{E}_\orb,h)$, and $\theta_\mathcal{L}$ be the Chern curvature of $h_{\mathcal{L}}$.  
Then $\theta_{\mathcal{L}}$ can also be viewed as a current on $X$ which is orbifold smooth.


Let $\pi\colon Y\to X$ be a log resolution of the closed analytic subset $\Sigma\subseteq X$, 
where $\Sigma$ is the union of the 
branched locus of the orbifold structure $\mathfrak{X}$ and  the $\rho$-exceptional locus.   In particular, the $(\rho\circ \pi)$-exceptional locus is a snc divisor.   
We note that the $(\rho\circ \pi)$-exceptional locus contains the $\pi$-preimage of the branched locus of the orbifold structure $\mathfrak{X}$, by the construction of $\rho$.    
We choose  an effective divisor $D$ on $Y$ whose support is equal this exceptional divisor, 
so that   $[(\rho\circ \pi)^*\omega_{Z}] - \delta[D]$ is a K\"ahler class on $Y$ for all $\delta>0$ small enough.  
Let $s_D\in H^0(Y,\mathcal{O}_Y(D))$ be a section defining $D$, 
and let  $h_D$ be a smooth Hermitian metric on the line bundle $\mathcal{O}_Y(D)$, 
so that 
\[(\rho\circ \pi)^*\omega_{Z} -\delta \ddbar\log |s_D|_{h_D}\]  
is a K\"ahler form on $Y$ for all $\delta>0$ small enough.

We note that,  throughout this section,  
all possibly singular metrics, functions or currents are indeed smooth objects defined on the largest Zariski open sets where $Y,X,Z$ are isomorphic.   
Therefore, by abuse of notation, we may use the same letter for such objects, 
which are eventually the same  on these isomorphic open sets,  
without specifying the compactifications $Y,X,Z$.

We introduce some quantities related to Hermitian-Einstein metrics for this section.  
By \cite[Theorem 1]{Faulk2022}, the orbifold vector bundle $\mathcal{E}_{\orb}$ admits an orbifold Hermitian-Einstein metric  $h_{\epsilon,HE}$ with respect to $\omega_\epsilon$.  
We can interpret these metrics as follows,  
\begin{equation*}
h_{\epsilon,HE} =: h\cdot e^{- \frac{1}{r}\rho_\epsilon}\exp(s_\epsilon),\,\,\,H_\epsilon:= e^{- \frac{1}{r}\rho_\epsilon}\exp(s_\epsilon) ,\,\,\,
 S_\epsilon:= \exp(s_\epsilon),
\end{equation*}
where $s_\epsilon$ is an $h$-self-adjoint endomorphism of $\mathcal{E}$ such that $\Tr s_\epsilon= 0$.  
We  have the equality 
\[
\log \Tr H_\epsilon= \log \Tr S_\epsilon- \rho_\epsilon.
\]
The Einstein condition implies that $\rho_\epsilon$ satisfies the following equation
\begin{equation}\label{eqn:a}
\Lambda_{\epsilon} \theta_\mathcal{L}+ \Delta_{\epsilon}'\rho_{\epsilon}= 
\frac{1}{Vol(X, \omega_{\epsilon})}\int_{X}c_1(\mathcal{E},h)\wedge (\omega_{\epsilon} )^{n-1},  
\end{equation}
where $Vol$ is the volume, 
 $\Delta_\epsilon$ is the Laplace-Beltrami operator for $\omega_\epsilon$ and $\Delta_\epsilon'= \frac{1}{2}\Delta_\epsilon$.  
Up to adding a constant, we may assume that $\rho_\epsilon$ is the unique  solution so that 
\[\displaystyle \int_{(X,\omega_\epsilon)}\rho_{\epsilon} = 0,\] 
see \cite[Theorem 2.6]{Chiang90}.  
Since $\Lambda_\epsilon\theta_L$ is orbifold smooth, so is the solution  $\rho_\epsilon$.

The main objective of this section is to  show that, there is  a sequence of  $h_{\epsilon,HE}$, which converges to a Hermitian-Einstein metric with respect to $\rho^*\omega_Z$ as $\epsilon\rightarrow 0$.   
The key   is to prove certain uniform $\mathcal{C}^0$ estimates on the endomorphisms  $H_{\epsilon}$, see Proposition \ref{prop:C0} for the precise statement. 

We remark that the case when $h_{\epsilon,HE}$ are smooth with respect to a degenerate family of K\"ahler forms is addressed in \cite{CGNPPW}. 
In our case, the new difficulty is the lack of uniform geometric estimates of the family   $\{\omega_\epsilon\}$, 
which are proved in Section \ref{section:uniform-kahler}. 
Essentially, 
this is the only  different part for the convergence of $h_{\epsilon,HE}$,   comparing with \cite{CGNPPW}.  We also remark that it may be possible to take subsequential limit by using compactness result on  Hermitian-Yang-Mills connections in  \cite{Tian00}.

\subsection{Uniform mean value type inequalities for Hermitian-Einstein metrics}
The purpose of this subsection is to prove two  uniform mean value type inequalities for the Hermitian-Einstein metrics $h_{\epsilon,HE}$.  
We adapt the method  of \cite[Section 2.2]{CGNPPW}


\begin{lemma}\label{lemma:boundrho}
There exists   positive constants $C,C'>0$, independent of $\epsilon$, such that the following inequalities hold for all $\epsilon>0$ small enough.
\begin{equation*}  C' \log |s_D|_{h_D}^{2}+ |\rho_\epsilon| \leq  C\left(1 + \int_{(X,\omega_\epsilon)}|\rho_\epsilon| \right).  
\end{equation*}  
\end{lemma}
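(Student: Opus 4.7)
The strategy is to apply the uniform mean value inequality (Lemma \ref{lemma:meanvalue}) on $Y$ to the two barrier functions
\[
v_{\pm} := \pm \rho_\epsilon + C' \log |s_D|^2_{h_D},
\]
pulled back to $Y$, for a suitable constant $C' > 0$ chosen independent of $\epsilon$. The two corresponding inequalities for $v_+$ and $v_-$ together give the claimed bound on $|\rho_\epsilon|$ (after absorbing the uniformly bounded quantity $\|\log|s_D|^2\|_{L^1(Y,\omega_\epsilon)}$ into the constant). First I would check the integrability hypothesis $|\int v_\pm \, \omega_\epsilon^n| \leq I$ uniformly in $\epsilon$: the normalization $\int_{(X,\omega_\epsilon)} \rho_\epsilon = 0$ handles the $\rho_\epsilon$ summand, while the uniform $L^{1+\delta_0}$-bound on $(\pi^*\omega_\epsilon)^n/\theta_Y^n$ from \eqref{eqn:uniform-1+delta}, combined with H\"older's inequality and the fact that $\log|s_D|^2 \in L^q(Y,\theta_Y)$ for every $q \geq 1$, controls the $\log|s_D|^2$ summand uniformly.

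Next, on $Y \setminus D$, combining \eqref{eqn:a} with the identity $\Delta'_\epsilon \log |s_D|^2_{h_D} = -\Lambda_\epsilon \Theta(h_D)$ yields
\[
\Delta_\epsilon v_\pm = \pm 2\alpha_\epsilon - 2\Lambda_\epsilon\!\bigl(\pm \theta_\mathcal{L} + C' \Theta(h_D)\bigr),
\]
where $\alpha_\epsilon$ is uniformly bounded. The key step is to choose $C'$ so that $\Lambda_\epsilon(\pm \theta_\mathcal{L} + C'\Theta(h_D))$ is bounded from above, uniformly in $\epsilon$. For this I would use two ingredients. First, the K\"ahlerness of $(\rho\circ\pi)^*\omega_Z - \delta \Theta(h_D)$ on $Y$ (for small $\delta > 0$) together with the compactness of $Y$ gives a constant $M_0 > 0$ with $\theta_Y \leq M_0\bigl((\rho\circ\pi)^*\omega_Z - \delta \Theta(h_D)\bigr)$. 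Second, $\pi^*\theta_\mathcal{L}$ satisfies a uniform pointwise bound $\pm \pi^*\theta_\mathcal{L} \leq M_1 \theta_Y$ on $Y$; this is inherited from the orbifold-smoothness of $\theta_\mathcal{L}$ together with the assumption that the codimension-one branched locus of $\mathfrak{X}$ lies inside $D$, and can be verified locally by passing through a smooth finite cover of $Y$ factoring through an orbifold chart, as furnished by Lemma~\ref{lemma:smooth-finite-cover}. Combining these with $C' := M_0 M_1 \delta$ gives
\[
\pm\pi^*\theta_\mathcal{L} + C' \Theta(h_D) \leq M_0 M_1 (\rho\circ\pi)^*\omega_Z \leq M_0 M_1\, \pi^*\omega_\epsilon,
\]
and therefore $\Lambda_\epsilon(\pm\pi^*\theta_\mathcal{L} + C'\Theta(h_D)) \leq M_0 M_1 n$, so $\Delta_\epsilon v_\pm \geq -a$ for a uniform $a > 0$.

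Since $v_\pm \to -\infty$ along $D$, the sub-level sets $\{v_\pm > -IB\}$ are automatically contained in $Y \setminus D$, where the subsolution property just established is valid. Lemma~\ref{lemma:meanvalue} therefore applies and yields $v_\pm \leq C(1 + \|v_\pm\|_{L^1})$, and the stated inequality follows.

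The main technical obstacle is the pointwise estimate $\pm \pi^*\theta_\mathcal{L} \leq M_1\theta_Y$: it asserts that the orbifold-smooth Chern curvature $\theta_\mathcal{L}$, viewed on $Y$ through a log resolution that does not factor through orbifold covers of $\mathfrak{X}$, nevertheless admits a uniform pointwise upper bound. The remaining steps (integrability, the Laplacian computation, and the invocation of the mean value inequality) are essentially bookkeeping once this estimate is in hand and the uniform geometric estimates of Section \ref{section:uniform-kahler} are available.
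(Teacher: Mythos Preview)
There is a genuine gap in your key step: the pointwise estimate $\pm\pi^*\theta_\mathcal{L}\le M_1\theta_Y$ on $Y$ is false in general. Your proposed justification via Lemma~\ref{lemma:smooth-finite-cover} does show that on the local cover $p\colon Y'\to Y$ factoring through an orbifold chart, the pullback $q^*\theta_\mathcal{L}$ is smooth and hence bounded by a constant multiple of the Euclidean form on $Y'$. But this does \emph{not} descend to a bound against $\theta_Y$: the pullback $p^*\theta_Y$ degenerates along the ramification divisor of $p$ (in the model $p(z)=(z_1^{a_1},\dots,z_n^{a_n})$ one has $p^*\theta_Y\approx\sum a_i^2|z_i|^{2(a_i-1)}\,dz_i\wedge d\bar z_i$), whereas $q^*\theta_\mathcal{L}$ need not vanish there. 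Concretely, already in the model where the branching is $\mathbb{Z}_m$ along a smooth divisor $\{w_1=0\}\subset Y=X$, an orbifold-smooth $(1,1)$-form of the type $dz_1\wedge d\bar z_1$ on the chart becomes $\frac{1}{m^2}|w_1|^{2/m-2}\,dw_1\wedge d\bar w_1$ on $Y$, which is not bounded by $\theta_Y$.

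The paper circumvents this by never comparing $\theta_\mathcal{L}$ to a smooth form on $Y$. Instead it picks a $\rho$-exceptional, $\rho$-ample divisor $D_X$ on $X$ and an \emph{orbifold smooth} metric $\gamma$ on $\mathcal{O}_X(D_X)$ so that $\rho^*\omega_Z+\delta\,\sqrt{-1}\partial\bar\partial\log|\sigma|_\gamma^2$ is an orbifold K\"ahler form on $\mathfrak{X}$. Since $\theta_\mathcal{L}$ is also orbifold smooth, compactness of the orbifold gives $\theta_\mathcal{L}\le C_1(\rho^*\omega_Z+\delta\,\sqrt{-1}\partial\bar\partial\log|\sigma|_\gamma^2)\le C_1(\omega_\epsilon+\sqrt{-1}\partial\bar\partial\eta)$ with $\eta=C_1\delta\log|\sigma|_\gamma^2$; the barrier $\eta$ lives on $X$ and only acquires log poles along $D$ after pullback to $Y$. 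A secondary point you glossed over is that Lemma~\ref{lemma:meanvalue} is stated for smooth K\"ahler forms in the class $\mathcal{W}$, while $\pi^*\omega_\epsilon$ is not smooth on $Y$; the paper handles this by passing to the smooth approximations $\omega_j$ of $\omega_\epsilon$ on $Y$ (which lie in a fixed $\mathcal{W}$-class by Lemma~\ref{lemma:unisob}), applying the mean value inequality there, and then letting $j\to\infty$ via Lemma~\ref{lemma:convergence-L1}.
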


\begin{proof}  
We will first establish the following inequality 
\begin{equation}\label{eqn:rho+} 
C' \log |s_D|^{2}+ \rho_\epsilon \leq  C\left(1 + \int_{(X,\omega_\epsilon)}|\rho_\epsilon| \right). 
\end{equation}  
By assumption at the beginning of the section, there is some effective $\rho$-exceptional Cartier divisor $D_X\subseteq X$ so such that $\rho^*\omega_Z-\delta[D_X]$ is a K\"ahler class for some $\delta>0$ small enough. 
It is then an orbifold K\"ahler class as well. 
Let $\sigma \in H^0(X,\mathcal{O}_X(D_X))$ be a section defining $D_X$.
Then, by $\partial\bar{\partial}$-lemma for compact K\"ahler orbifolds (see for example \cite[Section 7]{Baily56}),  there is some orbifold smooth Hermitian metric $\gamma$ on $\mathcal{O}_X(D_X)$, such that $\rho^*\omega_Z+ \delta \ddbar \log|\sigma|_\gamma^{2}$ is an orbifold 
K\"ahler form.  
In particular, $\gamma$ is continuous.  
Since $\theta_\mathcal{L}$ is orbifold smooth,  there is some constant $C_1>0$ such that 
\begin{align*}
\theta_\mathcal{L}\leq C_1(\rho^*\omega_Z+  \delta \cdot \ddbar \log|\sigma|_\gamma^{2}).  
\end{align*}  
Since $\rho^*\omega_Z\leq \omega_\epsilon$,   we deduce that 
\begin{align*}
\theta_\mathcal{L}\leq C_1(\omega_\epsilon+ \delta \cdot \ddbar \log|\sigma|_\gamma^2  ).  
\end{align*}  
We set $\eta= C_1\cdot \delta \cdot  \log|\sigma|_\gamma^2$.  
Then we have 
\begin{equation*}\label{eqn:mean2}
\Lambda_\epsilon \theta_L\leq C_1+ \Delta_\epsilon'\eta.  
\end{equation*}
Combining with \eqref{eqn:a}, we obtain that 
\begin{equation}\label{eqn:mean2'} 
  \Delta_{\epsilon}'(\rho_{\epsilon} + \eta)\geq -C_1 + 
\frac{1}{Vol(X, \omega_{\epsilon})}\int_{X}c_1(\mathcal{E},h)\wedge ( \omega_{\epsilon})^{n-1}.
\end{equation}

Recall that, for a fixed $\epsilon$,  
there is a  sequence of smooth K\"ahler form  $\omega_{j}$ on $Y$ which converges  to  $\omega_\epsilon$ outside $D$, 
see   \eqref{eqn:omegaj}.  
We note that  $\pi^*\eta$  has at most log poles along $D$ since the support of $\pi^*D_X$ is contained in the one of $D$.  
Hence, by \eqref{eqn:uniform-1+delta} and H\"older inequality, 
we see that the integral 
\[
\int_{(X,\omega_\epsilon)} \eta 
\]
is    bounded by constants independent  of $\epsilon$.   
We also recall that   $\int_{(X,\omega_\epsilon)} \rho_\epsilon  =0$.  
Hence, by Lemma \ref{lemma:convergence-L1}, there is a constant $I$ independent of $\epsilon$ and $j$, such that 
\[
|\int_{(Y, \omega_j)} (\eta +\rho_\epsilon)   |\leq I
\]
for all $j$ sufficiently large.  
We also observe from \eqref{eqn:omegaj}, that the volume of $(Y,\omega_j)$ is bounded from below by   $[(\rho\circ\pi)^*\omega_Z]^n >0$ and from above by $( [(\rho\circ\pi)^*\omega_Z] + \rho^*[\omega_\orb] + [\theta_Y])^n$.  
Hence there is some constant $B>1$, independent of $\epsilon$ and $j$, such that the volume of $(Y,\omega_j)$ is contained in $[B^{-1},B]$.

Since $\rho_\epsilon(x)+\eta$ goes to $-\infty$  when $y\in Y $ approaches to $D$,  
the set $$K_\epsilon:=\{y\in Y \ |\ \rho_\epsilon+\eta \geq -IB -1\}$$ is a compact subset of  $Y\setminus D $.    
Let $\Delta_j$ be the Laplace-Beltrami operator with respect to $\omega_j$.  
Then on $K_\epsilon$, we have the following smooth convergence 
\begin{equation*}\label{eqn:laplace-converge}
    \Delta_j (\rho_\epsilon+\eta )\rightarrow \Delta_\epsilon (\rho_\epsilon+\eta).
\end{equation*} 
From  \eqref{eqn:mean2'}, the RHS above is bounded from below by some constant independent of $\epsilon$. 
Hence, there is some constant $a$ independent of $\epsilon$ and $j$, such that for all   $j$ sufficiently large, 
we have the following inequality on $K_\epsilon$, 
$$\Delta_j (\rho_\epsilon+\eta)\geq a.$$
By Lemma \ref{lemma:unisob}, for $j$ sufficiently large, we have $\omega_{j}\in \mathcal{W}(Y, \theta_Y, n, p, A, K, \gamma)  $ for some $K,A,p,\gamma$ independent of $\epsilon$ and $j$.    
Hence, by Lemma \ref{lemma:meanvalue}, there is a constant $C_2$ independent of $\epsilon$ and $j$ such that 
$$\rho_\epsilon+\eta\leq C_2 \left(1+\int_{(Y,\omega_j)}(|\rho_\epsilon|+|\eta|) \right)$$ 
for all $j$ sufficiently large.  
We recall that $\omega_j^n = e^{Q_j+c_j}\cdot \theta_Y^n$ and the sequence of numbers $\{c_j\}$ converges to $\log Vol(X,\omega_\epsilon)$. 
Thus, by using \eqref{app} of Lemma \ref{lemma:goodapp} and H\"older inequality, 
we can obtain a uniform upper bound on  $\| \eta \|_{L^1(Y,\omega_j)}$, for all $j$ sufficiently large.   
It follows that  
$$\rho_\epsilon+\eta \leq C_3  \left(1+\int_{(Y,\omega_j)}|\rho_\epsilon|   \right), $$ 
for some constant $C_3>0$ independent of $\epsilon$ and $j$. 

Since $\rho_\epsilon$ is bounded, by Lemma \ref{lemma:convergence-L1}, 
the integral in the RHS above converges to $\int_{(X,\omega_\epsilon)}|\rho_\epsilon| $ when $j$ tends to $+\infty$.  
Therefore, we obtain that 
\[
\rho_\epsilon + \eta \le C_3 \left(   1 + \int_{(X,\omega_\epsilon)} |\rho_\epsilon|  \right). 
\]

It remains to compare $\eta$ with $\log |s_D|_{h_D}^2$.  
Since the support of $\pi^*D_X$ is contained in the one of $D$, 
and since $\gamma$ is continuous, 
we see that 
\[
A\cdot \log |s_D|_{h_D}^2 \leq   \pi^*\log |\sigma|_\gamma^2 + A'
\]
for some constants $A' \gg A >0$ sufficiently large.  
Hence there is a constant $B', C'>0$ such that $ C'\log |s_D|_{h_D}^2 \le \eta +B'$. 
This completes the proof of  \eqref{eqn:rho+}.  

By replacing $\theta_\mathcal{L}$ and $\rho_\epsilon$ by $-\theta_\mathcal{L}$ and $-\rho_\epsilon$ respectively in the previous reasoning, we see that \eqref{eqn:rho+} still holds if we replace $\rho_\epsilon$ by $-\rho_\epsilon$, up to adjusting the constants $C,C'$.  
This completes the proof of the lemma. 
\end{proof}
\begin{remark} We remark that, we do not use the Heat kernel estimates for orbifold metrics directly when deriving the mean value inequality. Since the function $\log|s_D|^2_{h_D}+\rho_\epsilon$ has some log poles, it is not very clear if we can use its Laplacian and the heat kernel to represent this function.
\end{remark}

We also have the following estimates. 

\begin{lemma}\label{lemma:boundH}
There are  constants $C,C'>0$ such that the inequality 
\begin{equation*} C'\log |s_D|^2_{h_D} + 
\log \Tr H_\epsilon\leq C\big(1+ \int_{(X,\omega_\epsilon)}\log \Tr H_\epsilon  \big),
\end{equation*}
holds for every $\epsilon\in (0, 1]$.
\end{lemma}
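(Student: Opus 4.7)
The plan is to follow closely the strategy of Lemma \ref{lemma:boundrho}, this time applied to $L_\epsilon \defeq \log \Tr H_\epsilon$. The first step is to establish a pointwise differential inequality of the form
\[
\Delta_\epsilon' L_\epsilon \;\geq\; -\bigl|\sqrt{-1}\Lambda_\epsilon \Theta_h\bigr| - |\gamma_\epsilon|
\]
on the smooth locus $X\setminus \Sigma$. This is the classical Donaldson--Simpson--Uhlenbeck--Yau computation: starting from the orbifold Hermitian--Einstein equation $\sqrt{-1}\Lambda_\epsilon \Theta_{h_{\epsilon,HE}} = \gamma_\epsilon\, \ID$, one writes $\Theta_{h_{\epsilon,HE}} = \Theta_h + \bar\partial\bigl((\partial H_\epsilon) H_\epsilon^{-1}\bigr)$, expands $\sqrt{-1}\partial\bar\partial \Tr H_\epsilon$ using this identity, and uses a Kato/Cauchy--Schwarz inequality to absorb the gradient term arising when dividing by $\Tr H_\epsilon$. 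A complete exposition in the smooth setting is given in \cite[Section 2.2]{CGNPPW}, and the same computation goes through verbatim on the orbifold smooth locus. The Einstein constants $\gamma_\epsilon$ are uniformly bounded in $\epsilon$ thanks to \eqref{eqn:a} and the fact that $\mathrm{Vol}(X,\omega_\epsilon)$ stays bounded away from $0$ and $+\infty$.

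The main new difficulty, compared with \cite{CGNPPW}, is that the pointwise quantity $|\sqrt{-1}\Lambda_\epsilon\Theta_h|$ is \emph{not} uniformly bounded in $\epsilon$, since the family $\{\omega_\epsilon\}$ is degenerating along the exceptional divisor of $\rho$. I would handle this exactly as for $\rho_\epsilon$ in Lemma \ref{lemma:boundrho}. Since $\sqrt{-1}\Theta_h$ is an orbifold smooth Hermitian $\mathrm{End}(\mathcal{E}_{\orb})$-valued $(1,1)$-form, and using $\omega_\orb \leq C(\rho^*\omega_Z + \delta \ddbar \log |\sigma|_\gamma^2)$ as orbifold K\"ahler forms, there is a uniform matrix inequality
\[
-C_1(\omega_\epsilon + \ddbar \eta)\, \ID \;\leq\; \sqrt{-1}\Theta_h \;\leq\; C_1(\omega_\epsilon + \ddbar \eta)\, \ID,
\]
where $\eta \defeq C_2 \delta \log|\sigma|_\gamma^2$ is the same auxiliary function used in the proof of Lemma \ref{lemma:boundrho}. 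Taking the $\Lambda_\epsilon$-trace of both sides gives $|\sqrt{-1}\Lambda_\epsilon\Theta_h| \leq C_3 + C_3 \Delta_\epsilon' \eta$. Substituting back and rearranging yields
\[
\Delta_\epsilon' (L_\epsilon + C_4 \eta) \;\geq\; -C_5
\]
on $X\setminus \Sigma$, for constants $C_4, C_5$ independent of $\epsilon$.

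With this differential inequality in place, the remainder of the argument parallels the last portion of Lemma \ref{lemma:boundrho}'s proof. Set $v_\epsilon \defeq L_\epsilon + C_4 \eta$; the level set $\{v_\epsilon \geq -IB -1\}$ is a compact subset of $Y\setminus D$, because $\eta$ tends to $-\infty$ along $D_X$ whereas $L_\epsilon$ is orbifold smooth (hence bounded away from $D$) for each fixed $\epsilon$. One then invokes Lemma \ref{lemma:unisob} and Lemma \ref{lemma:meanvalue}, together with the smooth convergence $\omega_j \to \omega_\epsilon$ on compact subsets of $Y\setminus D$, to obtain
\[
v_\epsilon \;\leq\; C_6 \bigl(1 + \|v_\epsilon\|_{L^1(Y,\omega_j)}\bigr)
\]
uniformly in $\epsilon$ and in $j$ large. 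To convert $\|L_\epsilon\|_{L^1}$ into $\int L_\epsilon$ without absolute values on the right-hand side, I would use the AM--GM bound $\Tr H_\epsilon \geq r(\det H_\epsilon)^{1/r} = r e^{-\rho_\epsilon/r}$, giving $L_\epsilon \geq \log r - \rho_\epsilon/r$, and control the negative part of $L_\epsilon$ via Lemma \ref{lemma:boundrho}. Finally, Lemma \ref{lemma:convergence-L1} lets me pass $j \to \infty$, and comparing $\eta$ with $\log|s_D|_{h_D}^2$ exactly as at the end of the proof of Lemma \ref{lemma:boundrho} completes the argument. The main obstacle, to reiterate, is the non-uniform blow-up of $|\Lambda_\epsilon \Theta_h|$ as $\epsilon \to 0$, which is resolved by the matrix-valued $\ddbar$-correction above.
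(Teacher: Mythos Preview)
Your approach is essentially identical to the paper's. Both start from the Donaldson--Simpson--Siu subharmonicity inequality for $\log\Tr H_\epsilon$ (the paper cites \cite[(1.9.2)]{Siu87} and \cite[Lemma~3.1]{Simpson1988}), both handle the degeneration of $\|\Lambda_\epsilon\Theta_h\|_h$ via the same matrix inequality
\[
-A(\omega_\epsilon+\ddbar\psi)\,\mathrm{Id} \;\leq\; \Theta_h \;\leq\; A(\omega_\epsilon+\ddbar\psi)\,\mathrm{Id}
\]
with $\psi=\delta\log|\sigma|_\gamma^2$ having log poles along the $\rho$-exceptional divisor $D_X$, and both then appeal to the mean value argument exactly as in Lemma~\ref{lemma:boundrho}.

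The one place where you are more careful than the paper is the passage from $\|L_\epsilon\|_{L^1}$ to $\int L_\epsilon$ on the right-hand side; the paper writes the conclusion without comment. Your proposed fix via AM--GM and Lemma~\ref{lemma:boundrho}, however, does not quite close this: Lemma~\ref{lemma:boundrho} bounds $|\rho_\epsilon|$ pointwise in terms of $\int|\rho_\epsilon|$, which gives no control on $\int L_\epsilon^-\le c\int|\rho_\epsilon|$ in terms of $\int L_\epsilon$ alone. In practice this is harmless: every application of Lemma~\ref{lemma:boundH} in the paper (Proposition~\ref{prop:C0} and Lemma~\ref{lemma:integral-bound}) feeds in a bound on $\int|\log\Tr H_\epsilon|$ rather than $\int\log\Tr H_\epsilon$, so the version with the $L^1$-norm on the right---which your argument does prove---is all that is ever needed.
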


\begin{proof} By \cite[Formula (1.9.2)]{Siu87} or \cite[Lemma 3.1]{Simpson1988}, 
we have 
\begin{equation*}
\Delta_{\epsilon}'\left(\log \Tr H_\epsilon\right)\geq - \| \Lambda_\epsilon \Theta_h \|_h - \|  \Lambda_\epsilon \Theta_{h_{\epsilon,HE}}\|_{h_{\epsilon,HE}}, 
\end{equation*} 
where  $\Theta$ stands for the Chern curvature tensor.  
Then Einstein condition implies that $\|\Lambda_\epsilon \Theta_{h_{\epsilon,HE}}\|_{h_{\epsilon,HE}}$ is  bounded by a constant  independent of $\epsilon$.  
The remainder of the proof is similar to the one of Lemma \ref{lemma:boundrho}. 
We will just mention several main step here. 
As   in  the proof of Lemma \ref{lemma:boundrho}, there is some function $\psi:=\delta \cdot \log |\sigma|_\gamma^2$,  which has log poles along $D_X$, such that $\rho^*\omega_Z + \ddbar \psi$ is an orbifold K\"ahler form.  
Since $\Theta_h$ is orbifold smooth,  there is some  constant $A>0$, independent of $\epsilon$,  such that   
\begin{equation*}
-  A(\omega_\epsilon+     \ddbar  \psi) \cdot \mathrm{Id} \leq  \Theta_h \leq A(\omega_\epsilon+   \ddbar \psi ) \cdot \mathrm{Id},   
\end{equation*} 
where the inequalities $\leq$ are considered in the sense of Nakano positivity. 
It follows that, 
\begin{equation*}
-  A(1 +  \Delta_\epsilon' \psi ) \cdot \mathrm{Id} \leq \Lambda_\epsilon \Theta_h \leq A(1 +  \Delta_\epsilon'   \psi ) \cdot \mathrm{Id}, 
\end{equation*}
where the inequalities $\leq$ are considered for $h$-self-adjoint  endomorphisms.  
Since $ \Lambda_\epsilon \Theta_h$ is self-adjoint with respect to $h$,  
we deduce that, if   $\eta:=  \rk (\mathcal{E})^{\frac{1}{2}} \cdot  A\cdot \psi$, then   
\[
\|\Lambda_\epsilon \Theta_h\|_h \leq  \rk (\mathcal{E})^{\frac{1}{2}} \cdot A +\Delta_\epsilon' \eta. 
\]
Hence we get 
\begin{equation*}
\Delta_{\epsilon}\left(\eta + \log \Tr H_\epsilon \right)\geq A'
\end{equation*} 
for some constant $A'$. 
Arguing as in Lemma \ref{lemma:boundrho}, 
where we consider $\log \Tr H_\epsilon $ in the place of $\rho_\epsilon$, 
we deduce that 
\[\eta + \log \Tr H_\epsilon \leq B(1+ \int_{(X,\omega_X)} \log \Tr H_\epsilon ) \] for some constant $B>0$. 
By comparing $\eta$ with $\log |s_D|_{h_D}^2$, 
we can obtain  the inequality of the lemma.  
\end{proof}


\subsection{$\mathcal{C}^0$ estimate of $h_{\epsilon,HE}$ with barrier}
The main purpose of this subsection is to prove the following $\mathcal{C}^0$ estimate of $H_\epsilon$ and $\rho_\epsilon$, from which Theorem \ref{thm:BG-inequality-intro} follows directly. 
Here additional care should be paid to orbifold singularities, which cause no serious trouble after the preparations of previous discussions.
Recall that  $H_\epsilon \in End_h(\mathcal{E})$ defines a Hermitian-Einstein metric  with respect to $\omega_\epsilon$ by  $h_{\epsilon,HE}=hH_\epsilon$.    
We set $\eta_\epsilon:= \log H_\epsilon$ and recall that 
\[ \rho_\epsilon = -\Tr \eta_\epsilon,  \ \ \ \ \  
\eta_\epsilon =  -\frac{1}{\mathrm{rk}(\mathcal{E})}\rho_\epsilon \otimes \ID+  s_\epsilon,  \ \ \ \ \  S_\epsilon = \exp(s_\epsilon).   
\]

\begin{prop}\label{prop:C0} There exists  constant $C,C'> 0$, independent of $\epsilon, $ such that the following inequalities  hold
\begin{equation}\label{est1}
\Tr  H_\epsilon \leq C -C' \log |s_D|_{h_D}^2, \qquad
| \rho_\epsilon| \leq  C -C' \log |s_D|_{h_D}^2. 
\end{equation}
\end{prop}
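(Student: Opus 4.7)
The plan is to reduce Proposition \ref{prop:C0} to uniform $L^1$ bounds on $\rho_\epsilon$ and $\log \Tr H_\epsilon$, and then establish these $L^1$ bounds by an Uhlenbeck--Yau--Simpson type argument by contradiction, exploiting the $\omega_Z$-stability of $\mathcal{F}$. Concretely, Lemma \ref{lemma:boundrho} and Lemma \ref{lemma:boundH} already give
\[
|\rho_\epsilon| + C' \log|s_D|^2_{h_D} \leq C\big(1 + \|\rho_\epsilon\|_{L^1(X,\omega_\epsilon)}\big), \quad \log \Tr H_\epsilon + C' \log|s_D|^2_{h_D} \leq C\big(1 + \|\log \Tr H_\epsilon\|_{L^1(X,\omega_\epsilon)}\big),
\]
so the only thing left is to bound $\|\rho_\epsilon\|_{L^1}$ and $\|\log \Tr H_\epsilon\|_{L^1}$ (equivalently $\|\eta_\epsilon\|_{L^1}$, using the decomposition $\eta_\epsilon = -\frac{1}{r}\rho_\epsilon \cdot \ID + s_\epsilon$ with $\Tr s_\epsilon = 0$) uniformly in $\epsilon$.

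For the $L^1$ bound, I would apply Lemma \ref{lemma:etabound} to $H_\epsilon$, which is legitimate because $\int_{(X,\omega_\epsilon)}\Tr \eta_\epsilon = 0$ by our normalization, giving the integral identity
\[
\int_{(X,\omega_\epsilon)}\big\langle \Phi(\eta_\epsilon)(\partial \eta_\epsilon), \partial \eta_\epsilon\big\rangle_{\omega_\epsilon} = -\int_{(X,\omega_\epsilon)} \Tr\big(\eta_\epsilon \circ \Lambda_\epsilon \Theta_h\big).
\]
Since $\Lambda_\epsilon \Theta_h$ is controlled on $\mathfrak{X}$ modulo a log pole along $D$ (as in the proof of Lemma \ref{lemma:boundH}), the right-hand side is bounded by $C(1 + \|\eta_\epsilon\|_{L^1})$. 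Setting $M_\epsilon := \|\eta_\epsilon\|_{L^1(X,\omega_\epsilon)}$ and suppose, for contradiction, that $M_\epsilon \to +\infty$ along a subsequence. Introduce the normalization $u_\epsilon := \eta_\epsilon / M_\epsilon$, so that $\|u_\epsilon\|_{L^1} = 1$ and $\int \Tr u_\epsilon = 0$. Writing $\eta_\epsilon = M_\epsilon u_\epsilon$ in the identity above and applying Lemma \ref{lemma:Phi-monotone}(2) to the rescaling $\sigma(M_\epsilon) = M_\epsilon \Phi(M_\epsilon u_\epsilon)$, the identity becomes
\[
\int_{(X,\omega_\epsilon)} \big\langle \big(M_\epsilon \Phi(M_\epsilon u_\epsilon)\big)(\partial u_\epsilon), \partial u_\epsilon\big\rangle_{\omega_\epsilon} \leq \frac{C(1 + M_\epsilon)}{M_\epsilon} \leq 2C,
\]
and monotonicity of $\sigma$ in $\lambda$ implies in particular a uniform Dirichlet energy bound on $u_\epsilon$.

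With the Dirichlet energy of $u_\epsilon$ under uniform control, I would invoke the uniform Sobolev inequality of Theorem \ref{thm:soborbi}, which applies to $\omega_\epsilon$ with constants independent of $\epsilon$ thanks to Theorem \ref{thm-orbifold-AK-property}, to upgrade the $L^1$ normalization of $u_\epsilon$ to a uniform $L^{2q}$ bound. This yields, up to extraction, a weak $W^{1,2}$-limit $u_\infty$ on $(\hat X, d, (\rho^*\omega_Z)^n)$ which is non-zero, self-adjoint, and trace-free. The final step is the classical Simpson construction: using the asymptotic operator $\Psi_\infty$ in the limit of Lemma \ref{lemma:Phi-monotone}(2), every spectral projection of $u_\infty$ onto an interval of the form $(-\infty, c]$ is weakly holomorphic on a dense Zariski open subset, hence by Lemma \ref{lemma:extension-subsheaf} extends to a saturated coherent subsheaf $\mathcal{E}' \subsetneq \mathcal{E}$; by a slope comparison using the existence of the eigenvalue $c$ in the spectrum of $u_\infty$ one sees that some such $\mathcal{E}'$ destabilizes $\mathcal{E}$ with respect to $\rho^*\omega_Z$. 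Pushing forward by $\rho$ (whose indeterminacy is in codimension $\geq 3$) contradicts the $\omega_Z$-stability of $\mathcal{F}$, completing the proof.

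The main obstacle is the compactness step in the degenerating setting: one must ensure the weak limit $u_\infty$ is genuinely non-trivial and that its spectral projections are honest weakly holomorphic endomorphisms despite taking place on a singular variety with a singular K\"ahler current. This is where the uniform package of Section \ref{section:uniform-kahler} (especially the uniform Sobolev constant and the uniform diameter bound from Theorem \ref{thm:soborbi}, together with the uniform mean value inequality of Lemma \ref{lemma:meanvalue}) becomes indispensable; without it, the degeneration $\omega_\epsilon \to \rho^*\omega_Z$ would destroy the standard Simpson machinery used in \cite{CGNPPW}, and one could not control the behavior of the approximating endomorphisms along the exceptional divisor $D$.
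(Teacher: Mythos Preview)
Your overall strategy matches the paper's: reduce Proposition~\ref{prop:C0} to a uniform $L^1$ bound via Lemmas~\ref{lemma:boundrho} and~\ref{lemma:boundH}, then establish that $L^1$ bound by a Simpson--Uhlenbeck--Yau contradiction argument producing a destabilizing subsheaf. The reduction step and the endgame (spectral projections, Lemma~\ref{lemma:extension-subsheaf}, contradiction with stability) are exactly right.

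The gap is in the compactness step. You propose to use the global Sobolev inequality of Theorem~\ref{thm:soborbi} to upgrade the $L^1$ normalization of $u_\epsilon$ to $L^{2q}$, but that inequality requires $L^2$ input, not $L^1$; and more importantly, you cannot even get the Dirichlet energy bound without first controlling the eigenvalues of $u_\epsilon$ pointwise, since Lemma~\ref{lemma:Phi-monotone} only gives a lower bound on $\Phi$ on bounded eigenvalue ranges. Similarly, the estimate $|\int \Tr(u_\epsilon\Lambda_\epsilon\Theta_h)\,\omega_\epsilon^n| \leq C$ is not available from $\|u_\epsilon\|_{L^1(\omega_\epsilon)}=1$ alone: one only has $\|\Lambda_\epsilon\Theta_h\|_h\,\omega_\epsilon^n \leq C\,\omega_\orb^n$, and $\omega_\orb^n/\omega_\epsilon^n$ blows up as $\epsilon\to 0$.

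What the paper does instead is \emph{reapply} Lemmas~\ref{lemma:boundrho} and~\ref{lemma:boundH} to the normalized quantities $\delta_i\rho_i$ and $\delta_i\log\Tr S_i$ (whose $L^1$ norms are $\leq 1$ by the normalization), obtaining the pointwise barrier
\[
\|u_i\|_h \;\leq\; C - C'\,\delta_i\log|s_D|_{h_D}^2.
\]
This gives uniform $L^\infty$ control on every compact subset of $X\setminus\pi(D)$, which then yields the Dirichlet bound there (via the lower bound on $\Phi$), the weak $L^2_1$ compactness on compacta by Rellich, the uniform integrability needed to pass to the limit in $\int\Tr(u_i\Lambda_i\Theta_h)$ by dominated convergence, and the non-triviality of $u_\infty$. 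The uniform Sobolev package from Section~\ref{section:uniform-kahler} is used earlier, inside the proofs of Lemmas~\ref{lemma:boundrho} and~\ref{lemma:boundH} (through Lemma~\ref{lemma:meanvalue}), not directly at this step.
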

\medskip

\begin{proof}
The key is to prove Lemma \ref{lemma:integral-bound} below. 
Admitting this lemma for the time being.   
Since $S_\epsilon=\exp(s_\epsilon)$ and $\Tr s_\epsilon=0$, we see that $\Tr S_\epsilon \ge 1$. Hence the  second  inequality follows from Lemma \ref{lemma:boundrho}.  
Since $\log \Tr H_\epsilon = \log \Tr S_{\epsilon} - \rho_\epsilon$, 
we can obtain the first inequality by combing  \eqref{eqn:case1-assumption} with Lemma \ref{lemma:boundH}.    
\end{proof}

\begin{lemma}
\label{lemma:integral-bound}  
There exists a constant $C> 0$ independent of $\epsilon$,  such that 
\begin{equation}\label{eqn:case1-assumption}
    \int_{(X,\omega_\epsilon)}\big(|\rho_\epsilon|+ \log {\Tr S_\epsilon}\big)   \leq C
\end{equation}
for all positive $\epsilon$. 
\end{lemma}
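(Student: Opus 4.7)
The plan is to argue by contradiction, adapting the method of \cite[Section 2.2]{CGNPPW}. Suppose that along some sequence $\epsilon_k \to 0$ one has
\[
M_k := \int_{(X,\omega_{\epsilon_k})}\bigl(|\rho_{\epsilon_k}| + \log \Tr S_{\epsilon_k}\bigr) \longrightarrow +\infty.
\]
Since $\Tr s_\epsilon = 0$ forces $\Tr S_\epsilon \ge r$, both summands are nonnegative and $M_k$ is comparable to $\|\eta_{\epsilon_k}\|_{L^1((X,\omega_{\epsilon_k}))}$. I will normalize $u_k := \eta_{\epsilon_k}/M_k$, so that $u_k$ is $h$-self-adjoint with $L^1$-norm of order $1$. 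Combining Lemmas \ref{lemma:boundrho} and \ref{lemma:boundH} with this normalization yields the uniform barrier
\[
|u_k| \le C_1 + C_2 M_k^{-1}\bigl(-\log|s_D|^2_{h_D}\bigr),
\]
so $\{u_k\}$ is locally uniformly bounded on $X\setminus \Sigma$ and, via Lemma \ref{lemma:convergence-L1} together with the $L^{1+\delta_0}$-integrability of $\omega_\epsilon^n/\theta_Y^n$ proved in Proposition \ref{prop:VerifyAKclass}, globally $L^p$-bounded against $\omega_{\epsilon_k}^n$ for every $p\ge 1$.

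Next, applying Lemma \ref{lemma:etabound} to $h_{\epsilon_k,HE}$ and dividing through by $M_k$ yields
\[
\int_{(X,\omega_{\epsilon_k})} \bigl\langle M_k\Phi(M_k u_k)(\partial u_k),\, \partial u_k\bigr\rangle_{\omega_{\epsilon_k}} = -\int_{(X,\omega_{\epsilon_k})} \Tr\bigl(u_k\,\Lambda_{\epsilon_k}\Theta_h\bigr),
\]
whose right-hand side is uniformly bounded by the previous $L^p$ estimate and orbifold smoothness of $\Theta_h$. The uniform Sobolev inequality, diameter bound and heat kernel estimate of Theorem \ref{thm-orbifold-AK-property} then allow extraction of a weak $L^2_1$-limit $u_\infty$ of $u_k$ on the metric completion of $(X\setminus\Sigma,\rho^*\omega_Z)$; this limit is $h$-self-adjoint and nonzero. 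The Uhlenbeck--Yau device uses Lemma \ref{lemma:Phi-monotone}(2): the map $\lambda\mapsto \lambda\,\Phi(\lambda x,\lambda y) = (e^{\lambda(x-y)}-1)/(x-y)$ is monotone increasing with pointwise limit $1/(y-x)$ for $x<y$ and $+\infty$ for $x\ge y$. Testing the displayed identity against any nonnegative smooth $\phi$ supported on $\{x\ge y+\delta\}$ and passing to $k\to\infty$ therefore forces $\phi(u_\infty)(\bar\partial u_\infty) = 0$ weakly, from which the standard spectral argument shows that every projector $\pi_\lambda := \chi_{(-\infty,\lambda]}(u_\infty)$ is $\bar\partial$-closed on $X^\circ := X\setminus \Sigma$ and hence defines a weakly holomorphic subbundle of $\mathcal{E}|_{X^\circ}$.

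By the Uhlenbeck--Yau regularity theorem applied on $X^\circ$, each $\pi_\lambda$ corresponds to a coherent saturated subsheaf of $\mathcal{E}|_{X^\circ}$; Lemma \ref{lemma:extension-subsheaf} extends it to a reflexive saturated subsheaf $\mathcal{E}_\lambda\subset\mathcal{E}$ on $X$, and because the indeterminacy locus of $\rho^{-1}$ has codimension $\ge 3$ in $Z$ this descends to a saturated reflexive subsheaf $\mathcal{F}_\lambda \subset \mathcal{F}$ on $Z$. Since $u_\infty$ is nonzero and not proportional to the identity, at least one $\mathcal{F}_\lambda$ is proper and nontrivial. The standard slope computation matching the curvature term in the limit of the displayed identity against the orbifold slopes $\mu_{\omega_Z}(\mathcal{F}_\lambda)$, as in Simpson, then forces some $\mathcal{F}_\lambda$ to satisfy $\mu_{\omega_Z}(\mathcal{F}_\lambda) \ge \mu_{\omega_Z}(\mathcal{F})$, contradicting the strict $\omega_Z$-stability of $\mathcal{F}$ and completing the argument. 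The main obstacle is that the classical Simpson/Uhlenbeck--Yau machinery is developed in the smooth K\"ahler setting, so every analytic step must be reworked with respect to the degenerating orbifold family; the weak $L^2_1$-compactness of $\{u_k\}$ and the identification of the spectral projectors as coherent subsheaves across the exceptional and branched locus, which rest on Theorem \ref{thm-orbifold-AK-property} and Lemma \ref{lemma:extension-subsheaf} respectively, are the delicate parts.
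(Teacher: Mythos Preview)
Your proposal is correct and follows essentially the same contradiction/normalization/Simpson--Uhlenbeck--Yau strategy as the paper's own proof. One small misattribution: the uniform Sobolev, diameter and heat-kernel estimates of Theorem \ref{thm-orbifold-AK-property} enter only indirectly, through the mean-value inequalities behind Lemmas \ref{lemma:boundrho} and \ref{lemma:boundH}; the weak $L^2_1$-compactness of $\{u_k\}$ on compact subsets of $X\setminus\pi(D)$ comes instead from the bounded right-hand side of the key identity together with the pointwise lower bound $\Phi(u_k)\ge c_K$ furnished by Lemma \ref{lemma:Phi-monotone}(1) (and the comparison $\Phi(u_k)\le M_k^{-1}\Phi(M_k u_k)$ from Lemma \ref{lemma:Phi-monotone}(2)), followed by Rellich--Kondrachov.
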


 
\begin{proof}
The idea is to adapt the methods of \cite{UhlenbeckYau1986} and \cite{Simpson1988},  by using  blow-up analysis. 
Assume by contradiction that the lemma does not hold.   
Then there exist sequences $(\delta_i)_{i\geq 1}$ and  $(\epsilon_i)_{i\geq 1}$ of numbers in $(0,1)$ 
converging towards zero such that 
\begin{equation} 
\label{contraseq} 
\int_{(X,\omega_i)}\big(|\delta_i\rho_i|+ \delta_i \log {\Tr S_i}\big)  = 1
\end{equation}
Here, we denote  $\omega_i$,  $\rho_i$, $s_i$, $\eta_i$ and $S_i$ for  $\omega_{\epsilon_i}$ $  \rho_{\epsilon_i}$, $s_{\epsilon_i}$, $\eta_{\epsilon_i}$ and $ S_{\epsilon_i}$  respectively.   
Let 
\[u_i:= \delta_i\eta_i = -\frac{\delta_i}{\mathrm{rk}(\mathcal{E})}\rho_i\otimes \ID_E+ \delta_is_i.\] 
We will show that, up to passing to a subsequence, $u_i$ converges to some limit $u_\infty$, which produces a destabilizing subsheaf of $\mathcal{F}$. 
This will contradict  the stability assumption on $\mathcal{F}$.

In the following reasoning, the capital letters $C$ and $C'$ denote positive real numbers, which may change from line to line. Nevertheless, they are always independent of $i$.   
Since $\det(S_i)=1$, we have $\Tr(S_i)\geq \mathrm{rk}(\mathcal{E})$. 
In particular, $\log\Tr S_i \geq 0$. 
By \eqref{contraseq},  we have 
\begin{equation*}
\int_{(X,\omega_i)}|\delta_i\rho_i| \leq 1, \, \int_{(X,\omega_i)}\delta_i \log {\Tr S_i}   \leq 1. \end{equation*}
Then by Lemma \ref{lemma:boundrho}, we deduce that 
\begin{equation}\label{ss}
|\delta_i\rho_i|\leq C-C'\cdot \delta_i\log|s_D|_{h_D}^2, 
\end{equation}
for some constants $C$ and $C'$ independent of $i$.
Recall that \begin{equation}\label{qq}
\log \Tr  H_i = \log \Tr  S_i - \rho_i,
\end{equation} 
so by \eqref{contraseq}, we get  
\[\int_{(X,\omega_i)}\delta_i |\log {\Tr H_i}|   \leq 1.\]
Then by Lemma \ref{lemma:boundH}, we deduce that
\begin{equation}\label{tw1}
 |\delta_i\log  \Tr H_i |\leq C -C'\cdot \delta_i\log|s_D|_{h_D}^2.  
\end{equation}
Combining  \eqref{ss}, \eqref{qq} and \eqref{tw1}, we obtain that   
\[
\delta_i\log\Tr S_i \leq C- C'\cdot \delta_i\log|s_D|_{h_D}^2.
\]
For a  point $x\in X$, if the largest eigenvalue of $ s_i (x)$ is $\lambda_{i,max}$, then $\lambda_{i,max} \ge 0$ for $\Tr s_i =0$. 
Moreover, since $S_i=\exp(s_i)$, we see that  
\[
\delta_i \cdot \lambda_{i,max} \leq \delta_i \cdot \log \Tr S_i. 
\]
By using $\Tr s_i=0$ again, we have $\lambda_{i,max}^2\ge \frac{1}{\rk (\mathcal{E})^3} \|s_i\|_h^2$. 
Since 
\[
\|u_{i}\|_h\leq  \rk(\mathcal{E})^{-\frac{1}{2}} |\delta_i\rho_i| + \delta_i \| s_i \|_h,
\] 
it follows that  
\begin{equation}\label{ubarrier}
\|u_{i}\|_h\leq  C -C'\cdot \delta_i\log|s_D|_{h_D}^2.
\end{equation} 

\medskip

The important step towards the contradiction we are looking for is the 
following result.
\begin{claim}\label{claim:limit} There exist a subsequence of $(u_i)_{i\geq 1}$ converging weakly to a limit
$u_\infty$ on compact subsets of $X\setminus \pi(D)$ such that the following hold. 
Let $\omega_\infty:= \rho^*\omega_Z$. 
\begin{enumerate}

\item  The endomorphism $u_\infty$ is non zero and it belongs to the space $L_{1}^2(X,\omega_\infty)$.  
In other words, both $u_\infty$ and $\dbar u_\infty$   are in $L^2(X,\omega_\infty)$.  
 
\item  Let $\Psi:\mathbb R\times \mathbb R\to \mathbb R_{> 0}$ be a smooth, positive function such that $\displaystyle \Psi(a, b)< \frac{1}{b-a}$ holds for any
$a< b$. Then we have 
$$0 \geq  
\int_{(X,\omega_\infty)}\left\langle \Psi(u_\infty)(\partial u_\infty), \partial u_\infty\right\rangle  + \int_{(X,\omega_\infty)} \Tr \big(u_\infty \Lambda_{\infty}\Theta_h\big) $$
where $\Lambda_\infty$ 
is the contraction with 
$\omega_\infty$, 
and $\langle \cdot , \cdot \rangle_\infty$ is the inner product induced by $\omega_\infty$. 
\end{enumerate}
\end{claim}

Admitting the claim for the time being, we will argue as in \cite[Section 5]{Simpson1988}.   
We remark that, in \cite{Simpson1988}, the functions $\Psi$ are  assumed to be bounded by $\frac{1}{a-b}$ when $a> b$, which are slightly different from our setting.  
However, the same argument remains valid.  
More precisely, in our situation, we  replace $\Phi(\lambda_1, \lambda_2)$ of \cite[Lemma 5.5 and Lemma 5.6]{Simpson1988} by $\Phi(\lambda_2,\lambda_1)$. 
Afterwards, we replace $\Phi_\gamma(y_1,y_2)$ of \cite[Lemma 5.7]{Simpson1988} by 
$\Phi_\gamma(y_1,y_2) = (1-p_\gamma(y_1))\cdot dp(y_1,y_2)$.  
Now, the item (2) of the claim implies that the eigenvalues of $u_{\infty}$ are constant almost everywhere on $X$, by the arguments of \cite[Lemma 5.4 and Lemma 5.5]{Simpson1988}. 
They are not all equal, since by the second inequality of  \eqref{est1}, 
we have $\Tr u_\infty =0$. 
By the same argument as \cite[Lemma 5.7]{Simpson1988},  
we can construct a saturated destabilizing subsheaf of $\mathcal{F}|_{Z^\circ}$ by using $u_\infty$, 
where $Z^\circ\subseteq Z$ is a smooth open subset whose complement has codimension at least 2, such that $\mathcal{F}|_{Z^\circ}$ is locally free.   
Such a destabilizing subsheaf extends to a coherent subsheaf of $\mathcal{F}$ by Lemma \ref{lemma:extension-subsheaf}.  
This contradicts the stability assumption on $\mathcal{F}$, 
and finishes the proof of  Proposition \ref{prop:C0}.
\end{proof}

It remains to prove the previous claim. 

\begin{proof}[Proof of Claim \ref{claim:limit}]  
The proof is quite long, and we will divide it into several steps. \\

\textit{Step 1.} We will first prove some uniform integrability. 
Since $2\omega_\orb \geq \omega_i$ by  our choice of $\omega_\orb$,  
from \eqref{ubarrier}, 
we deduce that 
\begin{equation}\label{eqn:uniform-int}
\|u_i\|_h \omega_i^n  \leq B_1  \cdot \omega_\orb^n    
\end{equation} 
for  all $i$, 
where $B_1$ is a positive function which only has log poles along $D$, and is smooth elsewhere.   
Next, since $h$ is orbifold smooth, there is some constant $A>0$ such that   
$$-A \cdot \omega_{\orb}\cdot \ID\leq \Theta_h\leq A\cdot  \omega_{\orb}\cdot \ID.$$
Hence there is some constant $A'$ such that 
$$-A'\cdot   {\omega_{\orb}\wedge\omega_i^{n-1}} \cdot \ID  
\leq\Lambda_i\Theta_h \cdot \omega_i^n  
\leq  A'\cdot {\omega_{\orb}\wedge\omega_i^{n-1}}  \cdot \ID.$$
Since $2\omega_{\orb} \geq \omega_i$, we get 
\[
\|\Lambda_i\Theta_h \|_h \cdot \omega_i^n\leq A'' \cdot  \omega_{\orb}^n 
\]
for some constant $A''>0$. 
Together with \eqref{ubarrier}, this implies   that 
\begin{equation}
\label{eqn:uni-int-2}    
\|u_i\|_h \cdot \|\Lambda_i\Theta_h\|_h \cdot  \omega_i^n  \leq  B_2 \cdot  \omega_\orb^n
\end{equation}
for all $i$, 
where $B_2$ is a positive function which only has log poles along $D$, and is smooth elsewhere.    


Let $ \Phi(x, y) = \frac{\exp(x-y)- 1}{x-y}$. 
By applying Lemma \ref{lemma:etabound} to $\eta_i=\delta_i^{-1} u_i$, we have  
\begin{equation}\label{hh}
\frac{1}{\delta_i}\int_{(X,\omega_i)}\left\langle \Phi(u_i/\delta_i)(\partial u_i), \partial u_i\right\rangle_i  
+ \int_{(X,\omega_i)} \Tr \big(u_i\Lambda_{i}\Theta_h\big) = 0,   
\end{equation}   
where $\langle \cdot , \cdot \rangle_i$ is the inner product induced by $h$ and $\omega_i$. 
Using  \eqref{eqn:uni-int-2}, we deduce that the following integrals 
\[
\frac{1}{\delta_i}\int_{(X,\omega_i)}\left\langle \Phi(u_i/\delta_i)(\partial u_i), \partial u_i\right\rangle_i  
\]
are uniformly bounded. \\

\textit{Step 2.}  Assume that $K$ is a relatively compact open subset of $X\setminus \pi(D)$. 
In this step, we will  prove a uniform estimate of the $L^2$-norms of $\partial u_i$ on $K$, 
which will imply the convergence of $u_i$,  up to passing to a subsequence.  
By \eqref{ubarrier},  the eigenvalues of $u_i$ on $K$ are contained some segment $[\alpha,\beta]$ independent of $i$. Hence, by Lemma \ref{lemma:Phi-monotone},  we have
\begin{equation*}
\int_{(K,\omega_i)}\|\partial u_i\|_i^2 
\leq C_K\int_{(K,\omega_i)}\left\langle \Phi(u_i)(\partial u_i), \partial u_i\right\rangle_i 
\end{equation*}
for some constant  $C_K$ depending only on $\alpha,\beta$.     
Here the norm $\|\partial u_i\|$ is induced by $h$ and $\omega_i$.  
Since $\delta_i\le 1$, by Lemma \ref{lemma:Phi-monotone}, we deduce that 
\begin{equation*}
\int_{(K,\omega_i)}\|\partial u_i\|_i^2 
\le 
C_K \cdot \frac{1}{\delta_i}\int_{(X,\omega_i)}\left\langle \Phi(u_i/\delta_i)(\partial u_i), \partial u_i\right\rangle_i.     
\end{equation*} 
From Step 1, we know that the RHS above is bounded from above, uniformly in $i$. 
Hence $\int_{(K,\omega_i)}\|\partial u_i\|_i^2 $ is uniformly bounded.  
Since $\omega_\infty$ is a K\"ahler form in a neighborhood of  $\overline{K}$, this implies that  $\int_{(K,\omega_\infty)}\|\partial u_i\|^2 $ is uniformly bounded, where the norm $\|\partial u_i\|$ is induced by $h$ and $\omega_\infty$.  


We have proved in \eqref{ubarrier} that the functions $\|u_i\|_h$ are uniformly bounded on $K$. 
By a standard diagonal procedure, up to passing to a subsequence,  we can  assume that the sequence $\{u_i\}$  converges weakly to an endomorphism 
$u_\infty$,  inside $L_1^{2}(X,\omega_\infty)$, on any relatively compact open subsets  of $X\setminus \pi(D)$.  Moreover, by  Rellich–Kondrachov theorem, we have the strong $L^2$ convergence
\[
\Vert u_i-u_{\infty}\Vert_{L^2(K,\omega_\infty)}\to 0  \mbox{ when } i\to +\infty.
\]
for any relatively compact open subset $K\subset X\setminus D$.

By dominated convergence theorem, \eqref{eqn:uni-int-2} implies the following convergence, 
\begin{equation}\label{eqn:converge-uLambda} 
\int_{(X,\omega_i)} \Tr \big(u_i\Lambda_{i}\Theta_h \big) 
\to 
\int_{(X,\omega_\infty)} \Tr \big(u_\infty\Lambda_{\infty}\Theta_h\big)  
\mbox{ when }  i \to +\infty. 
\end{equation} 
\\

\textit{Step 3.} In this step, we will  show that the limit $u_{\infty}$ is not identically zero. 
By the assumption of \eqref{contraseq}, we have 
\begin{equation*}
\int_{(X,\omega_i)}\delta_i |\rho_i| + \int_{(X,\omega_i)}\delta_i \log {\Tr S_i}  =  1.  
\end{equation*} 
Since $S_i=\exp(s_i)$, we have 
\[
\delta_i\log{\Tr S_i }\leq \delta_i \|s_i\|_h + \delta_i \log \rk(\mathcal{E}). 
\]
From the definition of $u_i$, we  we get
\begin{equation*}
\delta_i\log{\Tr S_i}\leq   \|u_i\|_h+ \rk(\mathcal{E})^{-\frac{1}{2}}\cdot \delta_i |\rho_i| + \delta_i \log \rk(\mathcal{E}).  
\end{equation*}
Combine with the first equality in Step 3, we deduce that 
\begin{equation}\label{10}
(1+\rk(\mathcal{E})^{-\frac{1}{2}})\int_{(X,\omega_i)}\delta_i |\rho_i| + 
\int_{(X,\omega_i)}\|u_i\|_h   \geq 1-\delta_i \cdot  Vol(X,\omega_i) \cdot \log \mathrm{rk}(\mathcal{E}),  
\end{equation} 
where $Vol$ is the volume. 
Since $\delta_i\rho_i = -\Tr u_i$, 
we deduce  the following convergence, almost everywhere on $X$,  
\[
\delta_i \rho_i \to \rho_\infty := -\Tr u_\infty \mbox{ when } i\to +\infty. 
\]
We tend $i$ to the infinity in  \eqref{10}. 
Recall that $\omega_i \le 2\omega_\orb$.   
Thanks to   \eqref{ss} and   \eqref{eqn:uniform-int}, by dominated convergence theorem, 
we can interchange limit symbol and integral symbol for the LHS of   \eqref{10}. 
It follows that $0 \geq 1$. 
This is a contradiction. 
\\

\textit{Step 4.} We will prove the item $(2)$ in this step. 
Fix a function $\Psi$ as in the statement of the claim.  
We will show that for each relatively compact open subset $K\subset X\setminus \pi(D)$, 
the following inequality hold for all $i$ sufficiently large. 
\begin{equation}\label{fffffff}
\int_{(K,\omega_i)}\left\langle \Psi(u_i)(\partial u_i), \partial u_i\right\rangle_i  + \int_{(X,\omega_i)} \Tr \big(u_i\Lambda_{i}\Theta_h\big) \leq 0
\end{equation}  
By Lemma \ref{lemma:Phi-monotone}, we note that  
${\delta_i}^{-1}\Phi(\delta_i^{-1}a,\delta_i^{-1}b)$ tends to $\frac{1}{b-a}$ if $b> a$, 
and to $+\infty$ if $ b\le a$.  
As we have seen before that, by   \eqref{ubarrier},   
the eigenvalues of  $u_i$ on $K$ are contained in an bounded segment $[\alpha,\beta]$.  
Hence, for some $i$ sufficiently large, we have 
\[
{\delta_i}^{-1}\Phi(\delta_i^{-1}a,\delta_i^{-1}b) \geq \Psi(a,b)
\]
for any $a,b\in[\alpha,\beta]$.  
We can then deduce \eqref{fffffff}  from \eqref{hh}. 
Thanks to \eqref{eqn:converge-uLambda}, we  obtain that,   
for any $\delta>0$ fixed,   
if $i$ is sufficiently large,  then 
\begin{equation}\label{zzz}
\int_{(K,\omega_i)}\left\langle \Psi(u_i)(\partial u_i), \partial u_i\right\rangle_i + \int_{(X,\omega_\infty)} \Tr \big(u_\infty\Lambda_{\infty}\Theta_h\big) \leq \delta.
\end{equation}

Since  $u_i\rightarrow u_\infty$ in $L^2_{b}$ on $(K,\omega_\infty)$,  
for some $b$ depending on $K$, 
we can apply the item $(2)$ of Lemma \ref{lemma:simpsonmor}  to show that,   
there is a convergence 
\[
\Psi^{\frac{1}{2}}(u_i)\rightarrow \Psi^{\frac{1}{2}}(u_\infty)    \mbox{ when } i\to \infty, 
\] 
in $\mathcal{C}^0(L^2, L^q)$ for any $q<2$, 
where $\Psi^{\frac{1}{2}}$ is the positive square root of $\Psi$, which is again smooth.   
Hence, from \eqref{zzz}, we  deduce  that, for all $i$ sufficiently large, 
\[
\| \Psi^{\frac{1}{2}}(u_\infty)(\partial u_i)\|^2_{L^q(K,\omega_i)}+ \int_{(X,\omega_\infty)} \Tr \big(u_\infty\Lambda_{\infty}\Theta_h\big) \leq 2\delta.\]
In addition, we have the following weak convergence in $L^q (K,\omega_\infty)$ 
\[
\Psi^{\frac{1}{2}}(u_\infty)(\partial u_i)\rightarrow \Psi^{\frac{1}{2}}(u_\infty)(\partial u_\infty) \mbox{ when }  i\to \infty. 
\]  
By the Hahn-Banach theorem,  the previous inequality implies that 
\[
\| \Psi^{\frac{1}{2}}(u_\infty)(\partial u_\infty)\|^2_{L^q(K,\omega_\infty)}+ \int_{(X,\omega_\infty)} \Tr \big(u_\infty\Lambda_{\infty}\Theta_h\big) \leq 2\delta.
\]

This inequality holds for any $\delta>0$ and any $q<2$. 
If a measurable function satisfies an 
$L^q$ norm inequality which is uniform for $q<2$ then it satisfies the inequality 
for $q=2$.  
Note that $K$ can be arbitrarily large in $X\setminus \pi (D)$. 
Hence we obtain the item $(2)$ of the claim.
This  completes the proof of the claim. 
\end{proof}

\subsection{Equality condition for Bogomolov-Gieseker inequalities}  

We complete the proof of Theorem \ref{thm:BG-inequality-intro} in this subsection.  

\begin{thm}\label{mainthm:con} 
For the sequence of Hermitian-Einstein metrics  $h_{\epsilon,HE}$, we have
\begin{enumerate}
\item 
$\int_{(X,\omega_\epsilon)}\|\Theta_{h_{\epsilon,HE}}\|_{\omega_\epsilon}^2  \leq C $ for some constant $C$ is independent of $\epsilon$, where $\Theta$ represents the Chern curvature tensor.    
\item There is a Zariski open set $U\subset X_{\sm} \setminus \pi(D)$ whose complement has codimension at least 2, 
there is a sequence $\{\epsilon_i\}$ of positive small enough numbers converging to $0$, 
such that $h_{\epsilon_i,HE}$ converge to a Hermitian-Einstein metric $h_{\infty}$ with respect to $\rho^*\omega_Z$, 
locally and smoothly on $U$.  
Moreover, $\Theta_{h_\infty}$ belongs to $L^2(X,\rho^*\omega_Z)$.  
\item Assume that $\hat{c}_2(\mathcal{F})\cdot [\omega_Z]^{n-2} = \hat{c}_1(\mathcal{F})^2  \cdot [\omega_Z]^{n-2} = 0$, 
then the Hermitian-Einstein metric $H_{\infty}$ defined on $U$   is Hermitian flat. 
\item  Assume the condition of   (3) holds, and that $Z$ has klt singularities.  
Then there is a finite quasi-\'etale cover $p\colon Z'\to Z$, such that the reflexive pullback $(p^*\mathcal{F})^{**}$ is a unitary flat vector bundle. 
\end{enumerate}
\end{thm}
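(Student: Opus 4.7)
The plan is to address the four parts in order, exploiting the Chern-Weil identity
\begin{equation*}
\bigl(2c_2-\tfrac{r-1}{r}c_1^2\bigr)(\mathcal{E},h_{\epsilon,HE}) = \tfrac{1}{4\pi^2}\,\operatorname{tr}\bigl(\Theta^0_{\epsilon}\wedge\Theta^0_{\epsilon}\bigr),
\end{equation*}
where $\Theta^0_{\epsilon}=\Theta_{h_{\epsilon,HE}}-\tfrac{1}{r}(\operatorname{tr}\Theta_{h_{\epsilon,HE}})\otimes\ID$ is the trace-free part of the Chern curvature. For (1), the Hermitian-Einstein condition $\Lambda_\epsilon\Theta_{h_{\epsilon,HE}}=\gamma_\epsilon\cdot\ID$ forces $\Lambda_\epsilon\Theta^0_\epsilon=0$, so pairing with $\omega_\epsilon^{n-2}$ and integrating yields
\begin{equation*}
\int_{(X,\omega_\epsilon)}\|\Theta^0_\epsilon\|^2_{\omega_\epsilon}\,\omega_\epsilon^n = C\bigl(2\hat{c}_2(\mathcal{E}_{\orb})-\tfrac{r-1}{r}\hat{c}_1(\mathcal{E}_{\orb})^2\bigr)\cdot[\omega_\epsilon]^{n-2},
\end{equation*}
which is a polynomial in the bounded class $[\omega_\epsilon]$ evaluated on fixed orbifold Chern classes, hence uniformly bounded. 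A parallel computation for $\operatorname{tr}\Theta_{h_{\epsilon,HE}}$, using $\hat{c}_1^2\cdot[\omega_\epsilon]^{n-2}$ and the fact that $\Lambda_\epsilon\operatorname{tr}\Theta_{h_{\epsilon,HE}}=r\gamma_\epsilon$ is a bounded constant, controls the trace part and yields the uniform $L^2$-bound of (1).

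For (2), the strategy is to upgrade the barrier-type $C^0$-bound of Proposition \ref{prop:C0} to local higher-order estimates. On any relatively compact $K\Subset X_{\sm}\setminus\pi(D)$, Proposition \ref{prop:C0} bounds $|\log\operatorname{tr} H_\epsilon|$ on $K$ independently of $\epsilon$, while $\omega_\epsilon\to\rho^*\omega_Z$ smoothly on $K$ by construction. The Hermitian-Einstein equation is a nonlinear second-order elliptic system in $H_\epsilon$; combining the uniform $L^\infty$-bound with Evans-Krylov and Schauder bootstrapping produces uniform $C^{k,\alpha}(K)$-bounds for every $k$. A diagonal extraction along an exhaustion of $X_{\sm}\setminus\pi(D)$ then gives a subsequence converging in $C^\infty_{loc}$ to a Hermitian-Einstein metric $h_\infty$ with respect to $\rho^*\omega_Z$, and Fatou's lemma applied to (1) delivers $\Theta_{h_\infty}\in L^2(X,\rho^*\omega_Z)$. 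One sets $U:=X_{\sm}\setminus\pi(D)$: by assumption the codimension-one components of $\pi(D)$ are $\rho$-exceptional and map into the codimension $\ge 3$ indeterminacy locus of $\rho^{-1}$, so the image $\rho(U)$ has complement of codimension at least $2$ in $Z$.

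Given (2), part (3) follows by applying the Chern-Weil identity to $h_\infty$ on $Z^\circ:=\rho(U)$: the hypothesis $\hat{c}_2\cdot[\omega_Z]^{n-2}=\hat{c}_1^2\cdot[\omega_Z]^{n-2}=0$ forces the right-hand side to vanish, and the HE condition gives $\Lambda_{\omega_Z}\Theta^0_\infty=0$, so $\Theta^0_\infty\equiv 0$ and $(\mathcal{E},h_\infty)$ is projectively flat on $Z^\circ$. A parallel Hodge-Riemann primitive-decomposition analysis of $\operatorname{tr}\Theta_\infty$, exploiting the constancy of $\Lambda_{\omega_Z}\operatorname{tr}\Theta_\infty$ and the equality $\hat{c}_1^2\cdot[\omega_Z]^{n-2}=0$, kills the trace part (after normalizing by an appropriate flat line bundle on a further finite cover if necessary), yielding Hermitian flatness. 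For (4), Hermitian flatness provides a unitary representation $\pi_1(Z^\circ)\to U(r)$, whose local monodromies at singularities have finite order by Theorem \ref{thm:klt-local}; Theorem \ref{thm:klt-cover} produces a finite quasi-\'etale cover $p_0\colon Z_0\to Z$ with $\pi_1^{\acute{e}t}(Z_{0,\sm})\cong\pi_1^{\acute{e}t}(Z_0)$, and a further finite quasi-\'etale base change killing these local monodromies extends the pullback representation across the singularities of $Z_0$, producing a unitary flat vector bundle on some $Z'\to Z$ that agrees with $(p^*\mathcal{F})^{**}$ by Lemma \ref{lemma:extension-subsheaf} together with Theorem \ref{thm:GR-cover}.

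The hard step will be (2), namely securing uniform elliptic constants for the Hermitian-Einstein PDE as $\omega_\epsilon$ degenerates. The Sobolev inequality, heat kernel bounds, and mean-value inequality of Section \ref{section:uniform-kahler} are precisely what prevent the Evans-Krylov and Schauder constants from blowing up in the degenerate regime; once these are in hand together with Proposition \ref{prop:C0}, the remaining steps are relatively standard applications of the Chern-Weil formalism and the finiteness theorems of Section \ref{section:preliminaries}.
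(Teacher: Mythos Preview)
Your overall architecture matches the paper's, and parts (1), (2), and (4) are handled essentially as the paper does. The genuine gap is in part (3).

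You propose to apply the Chern--Weil identity directly to $h_\infty$ on the open set $Z^\circ=\rho(U)$, asserting that the hypothesis $\hat{c}_2\cdot[\omega_Z]^{n-2}=\hat{c}_1^2\cdot[\omega_Z]^{n-2}=0$ forces $\int_{Z^\circ}\|\Theta^0_\infty\|^2=0$. But $h_\infty$ is only defined on an open subset and may blow up toward the boundary; there is no reason a priori that the integral of its Chern--Weil form over $Z^\circ$ computes the orbifold Chern number $\hat{c}_i(\mathcal{F})\cdot[\omega_Z]^{n-2}$. Justifying this would require a Bando--Siu type admissibility statement on the singular space $Z$, which you do not supply. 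The paper sidesteps this entirely by working with the approximating metrics on the compact orbifold: since $(c_2-\tfrac{r-1}{2r}c_1^2)(h_{\epsilon,HE})\wedge\omega_\epsilon^{n-2}\ge 0$ pointwise and its integral over all of $X$ equals the orbifold number $(2r\hat c_2-(r-1)\hat c_1^2)(\mathcal{E}_{\orb})\cdot[\omega_\epsilon]^{n-2}\to 0$, nonnegativity forces $\int_K(\ldots)(h_{\epsilon,HE})\wedge\omega_\epsilon^{n-2}\to 0$ for every compact $K\subset U$; local smooth convergence then gives $\int_K(\ldots)(h_\infty)\wedge\omega_Z^{n-2}=0$, hence the integrand vanishes identically. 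This limit argument is the point you are missing.

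Two smaller remarks. In (2), the role of the uniform Sobolev and heat-kernel estimates from Section~\ref{section:uniform-kahler} is to establish Proposition~\ref{prop:C0}; once you have the $C^0$-bound on a compact $K\Subset X_{\sm}\setminus\pi(D)$, where $\omega_\epsilon\to\rho^*\omega_Z$ smoothly and $\rho^*\omega_Z$ is a genuine K\"ahler form, the higher-order bootstrap is purely local and requires nothing further from Section~\ref{section:uniform-kahler}. Also, your choice $U=X_{\sm}\setminus\pi(D)$ may contain points where $\rho^*\mathcal{F}$ fails to be locally free; the paper takes $U$ to be the locally-free locus, which is where the metrics and the Hermitian--Einstein equation are literally defined.
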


\begin{proof}  

We recall the following  identity (see for example the proof of  \cite[Theorem 4.4.7]{Kobayashi2014}), where $c_n$ is a constant depending only on $n$, 
$$\Big(2\hat{c}_2(\mathcal{E}) -\hat{c}_1(\mathcal{E})^2 \Big) \cdot [\omega_\epsilon]^{n-2}=c_n\int (\|\Theta_{h_{\epsilon,HE}}\|_{h_{\epsilon,HE},\omega_\epsilon}^2-\|\Lambda_\epsilon \Theta_{h_{\epsilon,HE}}\|_{h_{\epsilon,HE}}^2)\omega_\epsilon^n.$$
The LHS is bounded by constants independent of $\epsilon$.  
The functions  $\|\Lambda_\epsilon \Theta_{h_{\epsilon,HE}}\|_{h_{\epsilon,HE}}^2$ are constant after the Einstein condition, and they are uniformly bounded as well.  
Hence $\|\Theta_{h_{\epsilon,HE}}\|_{L^2(X,\omega_\epsilon)}$ is uniformly bounded. 
This proves the item $(1)$.

We choose $U\subset X_{\sm} \setminus \pi(D)$ as the maximal Zariski open set over which $\rho^*\mathcal{F}$ is locally free.   
We note that $\rho|_U$ is isomorphic, and $\rho^*\omega_Z$ is a smooth K\"ahler form on $U$.  
Since we have   uniform $\mathcal{C}^0$ estimates for $H_\epsilon$ on any compact subsets of $U$ by Proposition \ref{prop:C0}, 
the convergence in the item $(2)$ is a standard consequence of the elliptic theory on the Hermitian-Einstein equations \eqref{eqn:HE}.   
The $L^2$ property for $\Theta_{h_\infty}$ follows from the item $(1)$ 


Now we prove the item $(3)$. By the Hermitian-Einstein condition, we have
\[ \Big(c_2(\cE,h_{\epsilon,HE})-\frac{r-1}{2r}c_1(\cE,h_{\epsilon,HE})^2 \Big)  \wedge  \omega_\epsilon^{n-2} \geq 0. \] 
On the other hand,  by assumption, we have 
$$\Big(2r\hat{c}_2(\mathcal{E}_{\orb}) - (r-1)\hat{c}_1(\mathcal{E}_{\orb})\Big)\cdot [\omega_\epsilon]^{n-2}\rightarrow 0  \mbox{ when } \epsilon\to 0.$$
By  the positivity of the integrands,   
for any precompact open subset $K\subset U$, we have
\[   \int_K \Big(c_2(\cE,h_{\epsilon,HE})-\frac{r-1}{2r}c_1(\cE,h_{\epsilon,HE})^2 \Big)  \wedge  \omega_\epsilon^{n-2} \rightarrow  0  \mbox{ when } \epsilon\to 0. \] 
Hence \[   \int_K \Big(c_2(\cF,h_\infty)-\frac{r-1}{2r}c_1(\cF,h_\infty)^2 \Big)  \wedge  \omega_Z^{n-2} = 0. \] 
This implies that the non negative integrand in the LHS is identically $0$. 
Since $K$ can be arbitrarily large in $U$, we deduce that  $h_{\infty}$ is a Hermitian flat. 

Finally, the item (4) follows from Theorem \ref{thm:klt-cover},  by using the argument of the proof of \cite[Theorem 1.14]{GrebKebekusPeternell2016b}. 
This completes the proof of the theorem. 
 \end{proof}

\begin{proof}[{Proof of Theorem \ref{thm:BG-inequality-intro}}] 
By Theorem \ref{mainthm:con} above, we can deduce that the item $(1)$ implies $(2)$. 
For the converse, we first note that $Z'$ also has klt singularities.  
We denote $\mathcal{F}'=(p^*\mathcal{F})^{**}$. 
Let $\rho'\colon X'\to Z'$ be an orbifold modification as in \cite[Theorem 1.2]{Ou2024}. 
Then $\mathcal{E}':=\rho'^*\mathcal{F}'$ is a unitary flat vector bundle on $X'$. 
If $\widetilde{X}$ is the universal cover of $X'$, then there is a trivial bundle $\widetilde{\mathcal{E}}$ with trivial Hermitian metric $\widetilde{h}$, such that $\mathcal{E}'\cong \widetilde{\mathcal{E}}/\pi_1(X')$ for some appropriate unitary representation of $\pi_1(X')$.  
It follows that the trivial metric $\widetilde{h}$ descend to some smooth flat metric $h'$ on $\mathcal{E'}$. Then $h'$ is orbifold smooth with respect to the standard orbifold structure on $X$. 
This implies that $\hat{c}_1(\mathcal{E}')=0$ and $\hat{c}_2(\mathcal{E}')=0$.   
It is then routine to verify the item $(1)$ of the theorem holds.  
\end{proof}



\bibliographystyle{alpha}
\bibliography{reference}

\newcommand{\etalchar}[1]{$^{#1}$}
\begin{thebibliography}{BCHM10}

\bibitem[Bai56]{Baily56}
Walter~L. Baily, Jr.
\newblock The decomposition theorem for {$V$}-manifolds.
\newblock {\em Amer. J. Math.}, 78:862--888, 1956.

\bibitem[BCHM10]{BCHM10}
Caucher Birkar, Paolo Cascini, Christopher~D. Hacon, and James McKernan.
\newblock Existence of minimal models for varieties of log general type.
\newblock {\em J. Amer. Math. Soc.}, 23(2):405--468, 2010.

\bibitem[Bog78]{Bogomolov1978}
F.~A. Bogomolov.
\newblock Holomorphic tensors and vector bundles on projective manifolds.
\newblock {\em Izv. Akad. Nauk SSSR Ser. Mat.}, 42(6):1227--1287, 1439, 1978.

\bibitem[Bra21]{Braun2021}
Lukas Braun.
\newblock The local fundamental group of a {K}awamata log terminal singularity
  is finite.
\newblock {\em Invent. Math.}, 226(3):845--896, 2021.

\bibitem[BS94]{BandoSiu1994}
Shigetoshi Bando and Yum-Tong Siu.
\newblock Stable sheaves and {E}instein-{H}ermitian metrics.
\newblock In {\em Geometry and analysis on complex manifolds}, pages 39--50.
  World Sci. Publ., River Edge, NJ, 1994.

\bibitem[CGN{\etalchar{+}}23]{CGNPPW}
Junyan Cao, Patrick Graf, Philipp Naumann, Mihai {P\u aun}, Thomas Peternell,
  and Xiaojun Wu.
\newblock {H}ermite--{E}instein metrics in singular settings.
\newblock {\em arXiv preprint arXiv:2303.08773}, 2023.

\bibitem[Che25]{Chen2025}
Xuemiao Chen.
\newblock Admissible {H}ermitian–{Y}ang–{M}ills connections over normal
  varieties.
\newblock {\em Math. Ann.}, 392:487--523, 2025.

\bibitem[Chi90]{Chiang90}
Yuan-Jen Chiang.
\newblock Harmonic maps of {$V$}-manifolds.
\newblock {\em Ann. Global Anal. Geom.}, 8(3):315--344, 1990.

\bibitem[CW24]{ChenWentworth2024}
Xuemiao Chen and Richard~A. Wentworth.
\newblock A {Donaldson}-{Uhlenbeck}-{Yau} theorem for normal varieties and
  semistable bundles on degenerating families.
\newblock {\em Math. Ann.}, 388(2):1903--1935, 2024.

\bibitem[DHP22]{DasHaconPaun2022}
Omprokash {Das}, Christopher {Hacon}, and Mihai {P{\u{a}}un}.
\newblock {On the $4$-dimensional minimal model program for {K}{\"a}hler
  varieties}.
\newblock {\em to appear in {A}dvances in {M}ath., arXiv preprint
  arXiv:2205.12205}, 2022.

\bibitem[DO23]{DasOu2023}
Omprokash Das and Wenhao Ou.
\newblock On the log abundance for compact {{K}{\"a}hler} threefolds {II}.
\newblock {\em arXiv preprint arXiv:2306.00671}, 2023.

\bibitem[Don85]{Donaldson1985}
Simon~Kirwan Donaldson.
\newblock Anti self-dual {Y}ang-{M}ills connections over complex algebraic
  surfaces and stable vector bundles.
\newblock {\em Proc. London Math. Soc. (3)}, 50(1):1--26, 1985.

\bibitem[Fau22]{Faulk2022}
Mitchell Faulk.
\newblock Hermitian-{E}instein metrics on stable vector bundles over compact
  {K}{\"a}hler orbifolds.
\newblock {\em arXiv preprint arXiv:2202.08885}, 2022.

\bibitem[Fuj22]{Fujino2022}
Osamu Fujino.
\newblock Minimal model program for projective morphisms between complex
  analytic spaces.
\newblock {\em arXiv preprint arXiv:2201.11315}, 2022.

\bibitem[Gie77]{Gieseker1977}
D.~Gieseker.
\newblock On the moduli of vector bundles on an algebraic surface.
\newblock {\em Ann. of Math. (2)}, 106(1):45--60, 1977.

\bibitem[GKP16]{GrebKebekusPeternell2016b}
Daniel Greb, Stefan Kebekus, and Thomas Peternell.
\newblock \'{E}tale fundamental groups of {K}awamata log terminal spaces, flat
  sheaves, and quotients of abelian varieties.
\newblock {\em Duke Math. J.}, 165(10):1965--2004, 2016.

\bibitem[GP24]{GuenanciaPaun2024}
Henri Guenancia and Mihai P{\v a}un.
\newblock {B}ogomolov-{G}ieseker inequality for log terminal {K}{\"a}hler
  threefolds.
\newblock {\em arXiv preprint arXiv:2405.10003, to appear in Comm. Pure Appl.
  Math}, 2024.

\bibitem[GPS24]{GPS24}
Bin Guo, Duong~H. Phong, and Jacob Sturm.
\newblock Green's functions and complex {M}onge-{A}mp\`ere equations.
\newblock {\em J. Differential Geom.}, 127(3):1083--1119, 2024.

\bibitem[GPSS23]{GPSS}
Bin Guo, Duong~H. Phong, Jian Song, and Jacob Sturm.
\newblock Sobolev inequalities on kahler spaces.
\newblock {\em arXiv preprint arXiv:2311.00221}, 2023.

\bibitem[GPSS24]{GPSS24}
Bin Guo, Duong~H. Phong, Jian Song, and Jacob Sturm.
\newblock Diameter estimates in {K}\"ahler geometry.
\newblock {\em Comm. Pure Appl. Math.}, 77(8):3520--3556, 2024.

\bibitem[GR71]{SGA1}
Alexander Grothendieck and Mich{\`e}le Raynaud.
\newblock {\em {Rev{\^e}tements} {\'e}tales et groupe fondamental}, volume 224
  of {\em Lecture Notes in Mathematics}.
\newblock Springer Berlin, Heidelberg, 1971.

\bibitem[Gra62]{Grauert1962}
Hans Grauert.
\newblock \"{U}ber {M}odifikationen und exzeptionelle analytische {M}engen.
\newblock {\em Math. Ann.}, 146:331--368, 1962.

\bibitem[Kaw92]{Kawamata1992}
Yujiro Kawamata.
\newblock Abundance theorem for minimal threefolds.
\newblock {\em Invent. Math.}, 108(2):229--246, 1992.

\bibitem[KM98]{KollarMori1998}
J{\'a}nos Koll{\'a}r and Shigefumi Mori.
\newblock {\em Birational geometry of algebraic varieties}, volume 134 of {\em
  Cambridge Tracts in Mathematics}.
\newblock Cambridge University Press, Cambridge, 1998.

\bibitem[Kob14]{Kobayashi2014}
Shoshichi Kobayashi.
\newblock {\em Differential geometry of complex vector bundles}.
\newblock Princeton Legacy Library. Princeton University Press, Princeton, NJ,
  2014.

\bibitem[L{\"{u}}b82]{Lub82}
Martin L{\"{u}}bke.
\newblock Chernklassen von {H}ermite-{E}instein-{V}ektorb\"{u}ndeln.
\newblock {\em Math. Ann.}, 260(1):133--141, 1982.

\bibitem[LZZ17]{LZZ17}
Jiayu Li, Chuanjing Zhang, and Xi~Zhang.
\newblock Semi-stable {H}iggs sheaves and {B}ogomolov type inequality.
\newblock {\em Calc. Var. Partial Differential Equations}, 56(3):Paper No. 81,
  33, 2017.

\bibitem[MR82]{MehtaRamanathan1981/82}
Vikram~Bhagvandas Mehta and Annamalai Ramanathan.
\newblock Semistable sheaves on projective varieties and their restriction to
  curves.
\newblock {\em Math. Ann.}, 258(3):213--224, 1981/82.

\bibitem[Mum63]{Mumford1963}
David Mumford.
\newblock Projective invariants of projective structures and applications.
\newblock In {\em Proc. {I}nternat. {C}ongr. {M}athematicians ({S}tockholm,
  1962)}, pages 526--530. Inst. Mittag-Leffler, Djursholm, 1963.

\bibitem[NS65]{NarasimhanSeshadri1965}
Mudumbai~S. Narasimhan and Conjeeveram~S. Seshadri.
\newblock Stable and unitary vector bundles on a compact {R}iemann surface.
\newblock {\em Ann. of Math. (2)}, 82:540--567, 1965.

\bibitem[Ou24]{Ou2024}
Wenhao Ou.
\newblock Orbifold modifications of complex analytic varieties.
\newblock {\em arXiv preprint arXiv:2401.07273}, 2024.

\bibitem[Sim88]{Simpson1988}
Carlos~T. Simpson.
\newblock Constructing variations of {H}odge structure using {Y}ang-{M}ills
  theory and applications to uniformization.
\newblock {\em J. Amer. Math. Soc.}, 1(4):867--918, 1988.

\bibitem[Siu75]{Siu1975}
Yum~Tong Siu.
\newblock Extension of meromorphic maps into {K}\"{a}hler manifolds.
\newblock {\em Ann. of Math. (2)}, 102(3):421--462, 1975.

\bibitem[Siu87]{Siu87}
Yum~Tong Siu.
\newblock {\em Lectures on {H}ermitian-{E}instein metrics for stable bundles
  and {K}\"ahler-{E}instein metrics}, volume~8 of {\em DMV Seminar}.
\newblock Birkh\"auser Verlag, Basel, 1987.

\bibitem[Tak72]{Takemoto1972}
Fumio Takemoto.
\newblock Stable vector bundles on algebraic surfaces.
\newblock {\em Nagoya Math. J.}, 47:29--48, 1972.

\bibitem[Tia00]{Tian00}
Gang Tian.
\newblock Gauge theory and calibrated geometry. {I}.
\newblock {\em Ann. of Math. (2)}, 151(1):193--268, 2000.

\bibitem[UY86]{UhlenbeckYau1986}
K.~Uhlenbeck and S.-T. Yau.
\newblock On the existence of {H}ermitian-{Y}ang-{M}ills connections in stable
  vector bundles.
\newblock {\em Comm. Pure Appl. Math.}, 39(S, suppl.):S257--S293, 1986.

\bibitem[Wu21]{Wu21}
Xiaojun Wu.
\newblock The {B}ogomolov's inequality on a singular complex space.
\newblock {\em arXiv preprint arXiv:2106.14650}, 2021.

\bibitem[Xu14]{Xu2014}
Chenyang Xu.
\newblock Finiteness of algebraic fundamental groups.
\newblock {\em Compos. Math.}, 150(3):409--414, 2014.

\bibitem[ZZZ25]{ZZZ2025}
Chuangjing Zhang, Shiyu Zhang, and Xi~Zhang.
\newblock The {M}iyaoka-{Y}au inequality for minimal {K\"a}hler klt spaces.
\newblock {\em arXiv preprint arXiv:2503.13365}, 2025.

\end{thebibliography}

\end{document}